\pdfoutput=1

\documentclass[11pt]{article}
\usepackage[margin=1.08in]{geometry}
\usepackage{amsmath,amssymb,amsthm,mathtools}
\usepackage{enumitem}
\usepackage{array,booktabs}
\usepackage{microtype}
\usepackage{mathrsfs}
\usepackage{tikz}
\usepackage{placeins}
\usepackage{float}
\usetikzlibrary{arrows.meta,calc,positioning,decorations.pathreplacing,fit}
\usepackage{hyperref}

\hypersetup{colorlinks=true,linkcolor=blue,citecolor=blue,urlcolor=blue}

\newtheorem{theorem}{Theorem}[section]
\newtheorem{lemma}[theorem]{Lemma}
\newtheorem{proposition}[theorem]{Proposition}
\newtheorem{corollary}[theorem]{Corollary}
\newtheorem{definition}[theorem]{Definition}
\newtheorem{remark}[theorem]{Remark}
\newtheorem{example}[theorem]{Example}

\newcommand{\Cay}{\overrightarrow{\operatorname{Cay}}}
\newcommand{\Z}{\mathbb Z}
\newcommand{\dist}{\operatorname{dist}}
\newcommand{\ord}{\operatorname{ord}}
\newcommand{\Boxprod}{\mathbin{\square}}

\tikzset{
  zpoint/.style={circle, fill, inner sep=1.6pt},
  rpoint/.style={circle, draw, thick, inner sep=1.8pt},
  negedge/.style={thick, bend left=35},
  posedge/.style={thick, dashed, bend right=35},
  core/.style={thick, dashed},
  blocker/.style={decorate, decoration={brace, amplitude=4pt}, thick},
  layerbox/.style={draw, rounded corners, thick, minimum width=1.55cm, minimum height=0.65cm, align=center, font=\scriptsize},
  pathblock/.style={draw, rounded corners, thick, minimum width=1.8cm, minimum height=0.62cm, align=center, font=\scriptsize}
}

\title{Two Arc-Disjoint Hamiltonian Paths in Finite Two-Generated Abelian Cayley Digraphs}
\author{SangHyun Park\\Yonsei University}
\date{}

\begin{document}
\maketitle

\begin{abstract}
We prove the finite abelian two-generator conjecture of Darijani--Miraftab--Witte Morris: every directed Cayley digraph on a finite abelian group with two distinct nonzero generators has two arc-disjoint Hamiltonian paths.  The proof uses a cut-reflection theorem for Hamiltonian cut values in the family $\Cay(\Z_k;a,a+1)$: if $Z$ is the set of such values and $N=k-1$, then, with $N-Z=\{N-z:z\in Z\}$,
\[
        \dist(Z,N-Z)\le1.
\]
The proof uses sector-filling inequalities for primitive-ray multiplicities and an extremal graph recording pairs at minimal reflected distance.  The estimate is sharp modulo parity: exact reflection occurs for odd $k$, while distance one occurs for even $k$.  The second remaining cyclic family, $\Cay(\Z_k;-a,a+1)$, is treated by an explicit quotient--fiber construction.  We also prove the remaining three-factor case for Cartesian products of directed cycles.  Together with the two-factor and at-least-four-factor theorems of Darijani--Miraftab--Witte Morris, this resolves their directed-cycle product conjecture for all numbers of factors.
\end{abstract}

\section{Introduction}

Hamiltonian paths and cycles in Cayley graphs are a central theme in algebraic graph theory.  They are connected to Lov\'asz's question on Hamiltonian paths in vertex-transitive graphs \cite{Lovasz1970} and to the Hamiltonian-cycle problems in Cayley graphs and digraphs surveyed in \cite{WitteGallian1984,CurranGallian1996}.  Hamilton decompositions of dense directed graphs begin with Tillson's theorem for complete directed graphs \cite{Tillson1980}, while the decomposition theory for Cayley graphs includes the survey of Alspach--Bermond--Sotteau \cite{AlspachBermondSotteau1990}, Stong's work on Cartesian products \cite{Stong1991}, and Liu's Hamilton-decomposition theorem for abelian Cayley graphs \cite{Liu2003}.  These results concern decompositions into Hamiltonian cycles in dense or undirected settings; the problem considered here is a sparse directed packing problem, asking for two arc-disjoint Hamiltonian paths in a two-generated Cayley digraph.  Directed Cayley graphs have their own difficulties: Hamiltonian cycles can fail even in small valence, and in nonabelian settings even Hamiltonian paths may fail.  Positive and negative results for Cayley digraph Hamiltonian paths include Witte \cite{Witte1982}, Jungreis \cite{Jungreis1989}, Morris \cite{Morris2012,WitteMorris2013}, and the survey of Lanel, Pallage, Ratnayake, Thevasha, and Welihinda \cite{LanelEtAl2019}.  Related work on Cayley diagrams and products of directed cycles includes Bermond--Favaron--Mah\'eo \cite{BermondFavaronMaheo1989}, Curran--Witte \cite{CurranWitte1985}, Foregger \cite{Foregger1978}, Housman \cite{Housman1981}, and Pak--Radoi\v{c}i\'c \cite{PakRadoicic2009}.  For Cartesian products of directed cycles, Trotter--Erd\H{o}s gave the two-factor Hamiltonicity characterization, and Curran--Witte proved Hamiltonicity for products of three or more nontrivial directed cycles \cite{TrotterErdos1978,CurranWitte1985}.

Darijani--Miraftab--Witte Morris (abbreviated DMM below) reduced the finite abelian two-generator problem to two cyclic assertions \cite{DMM}.  They also settled Cartesian products of directed cycles with two factors and with at least four factors, leaving the three-factor product as the remaining case.  We resolve these remaining cases: the family $\Cay(\Z_k;a,a+1)$ is settled by a reflection theorem for Hamiltonian cut values, the family $\Cay(\Z_k;-a,a+1)$ by a direct quotient--fiber construction, and the three-factor product by a switchable-pair lifting argument.  Combining the two cyclic results below with the DMM reduction theorem gives the following theorem.

\begin{theorem}[Main theorem]\label{thm:main}
Let $G$ be a finite abelian group, and let $a,b\in G\setminus\{0\}$ be distinct elements that generate $G$.  Then
\[
        \Cay(G;a,b)
\]
has two arc-disjoint Hamiltonian paths.
\end{theorem}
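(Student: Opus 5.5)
The proof is a reduction followed by two cyclic constructions. First I invoke the DMM reduction theorem \cite{DMM}. It says the statement for an arbitrary finite abelian group $G=\langle a,b\rangle$ with $a\ne b$ nonzero follows once two cyclic families are known to have two arc-disjoint Hamiltonian paths. These families are $\Cay(\Z_k;a,a+1)$ and $\Cay(\Z_k;-a,a+1)$, over all admissible $k$ and $a$ in their stated ranges. I will restate the reduction precisely, including its hypotheses (for instance $\gcd$ conditions, and which small or degenerate cases DMM already handled directly), so that the theorem reduces to exactly these two propositions.

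For $\Cay(\Z_k;a,a+1)$ I use the standard description of Hamiltonian paths in two-generated abelian Cayley digraphs. Each such path is determined, up to its endpoint, by a choice of which vertices use the generator $a$ and which use $a+1$. In the cyclic two-generator setting these choices are governed by a single integer parameter, the cut value: the number of $a$-arcs, i.e.\ the position of the ``diagonal'' in the Housman/Rankin picture. Let $Z\subseteq\{0,\dots,N\}$, with $N=k-1$, be the set of cut values realized by Hamiltonian paths. Two such paths with cut values $z$ and $z'$ can be made arc-disjoint essentially when they use complementary arc sets. Complementing the roles of $a$ and $a+1$ sends $z\mapsto N-z$, so I need $z\in Z$ with $N-z$ or a neighbour of it in $Z$, plus a local adjustment that absorbs an off-by-one discrepancy. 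The key input is therefore the cut-reflection estimate $\dist(Z,N-Z)\le1$ announced in the abstract. I would prove it by the primitive-ray multiplicity and sector-filling inequalities, taking an extremal pair at minimal reflected distance and deriving a contradiction if that distance is at least $2$. After that, an explicit rerouting at a single vertex handles the distance-one case, which occurs for even $k$.

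For $\Cay(\Z_k;-a,a+1)$ I use a quotient--fiber construction. I pass to the quotient by the subgroup generated by the difference $(a+1)-(-a)=2a+1$, treat the graph as a stack of fibers (cosets) joined by the generators, and write down two explicit Hamiltonian paths. One traverses fibers in a zigzag; the other is its image under the reflection $x\mapsto -x$ composed with a shift, which interchanges the two generators up to orientation. The checks are that both paths are Hamiltonian and that no arc is shared, separately on the transitions between consecutive fibers and at the wraparound. Combining the two cyclic propositions with the DMM reduction yields the theorem.

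The main obstacle is the reflection estimate for $\Cay(\Z_k;a,a+1)$. One must control which cut values are realizable, and this depends arithmetically on $k$ and $a$. My first attempt would be to assume a minimal counterexample with $\dist(Z,N-Z)\ge2$ and use the extremal graph on pairs at minimal distance to force a sector of primitive rays whose multiplicities violate the sector-filling inequality.
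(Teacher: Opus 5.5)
Your outline for $\Cay(\Z_k;a,a+1)$ follows the paper's route. You use cut values and the estimate $\dist(Z,N-Z)\le1$, proved by primitive-ray multiplicities, sector filling and an extremal graph of pairs at minimal reflected distance. Your ``local adjustment'' should simply be the DMM count criterion (their Proposition~4.3), which the paper cites. It handles all sums $d+e\in\{k-2,k-1,k\}$ at once. This includes exact reflection, where the standard paths $P_d$ and $P_{N-d}$ still share the $b$-arc at $0$ when $0<d<N$.

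The genuine gap is in the second family $\Cay(\Z_k;-a,a+1)$. The symmetry you rely on does not exist. Under $x\mapsto c-x$, an arc $x\to x+g$ goes to $c-x\to c-x-g$, which is the $g$-arc from $c-x-g$ to $c-x$ traversed backwards. So the map preserves labels and reverses orientation; it does not interchange $-a$ and $a+1$. In fact no map $x\mapsto c\pm ux$ with $u$ a unit swaps the two generators, even up to orientation. A swap would force $u\bigl((-a)+(a+1)\bigr)=(a+1)+(-a)$, i.e.\ $u=1$, and then $-a\equiv a+1\pmod k$, which means $L=1$. Even a genuine label-swapping symmetry $\psi$ would not by itself make $P$ and $\psi(P)$ arc-disjoint. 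Since the ``zigzag'' path is never specified either, the second cyclic assertion is left without a construction.

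What is missing is to get arc-disjointness for free and pay for it with a splice. Since $(a+1)-(-a)=2a+1$, any rule that picks the label by the quotient position $t\in\Z_{2a+1}$ (constant on cosets of $\langle 2a+1\rangle$) gives a permutation $F_S$. It is a skew product over the quotient $(2a+1)$-cycle, with $\gcd(L,a+1-|S|)$ cycles. Using the opposite label at every vertex gives a cover $F_{\Z_{2a+1}\setminus S}$ that is automatically arc-disjoint from it.

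The two fiber return shifts sum to $1$, so for even $L$ these complementary covers can never both be Hamiltonian cycles. Some repair is unavoidable, and your proposal does not see this case split. The paper takes $|S|=a+2$, so $P=F_S$ has shift $-1$ and is a Hamiltonian cycle. The complement $Q$ has shift $2$ and hence $\gcd(L,2)$ cycles.
\begin{itemize}
\item For odd $L$, delete one arc from each.
\item For even $L$, some arc $u\to v$ of $P$ joins the two cycles of $Q$. Delete the $Q$-arc leaving $u$ and the $Q$-arc entering $v$, and insert $u\to v$; this gives one Hamiltonian path. Deleting the same arc from $P$ gives the other. They are disjoint because $P$ and $Q$ use opposite labels at every vertex.
\end{itemize}
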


The two-generator assumption is essentially the minimal valence condition: for $|G|\ge3$, a one-generator directed Cayley graph has only $|G|$ non-loop arcs, fewer than the $2|G|-2$ arcs required by two disjoint Hamiltonian paths.  For $|G|=2$ the hypothesis is vacuous.

DMM formulated this finite abelian two-generator statement as Conjecture~1.3 \cite[Conjecture~1.3]{DMM}.  Thus Theorem~\ref{thm:main} gives the following immediate consequence.

\begin{corollary}\label{cor:dmm-conj-abelian}
Conjecture~1.3 of Darijani--Miraftab--Witte Morris holds for all finite abelian groups.
\end{corollary}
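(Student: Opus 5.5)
The corollary is a restatement of Theorem~\ref{thm:main}, so the plan is to check that the hypotheses and conclusion of DMM's Conjecture~1.3 match those of Theorem~\ref{thm:main} exactly, and then invoke the theorem. First I will recall the precise formulation of \cite[Conjecture~1.3]{DMM}. It asserts that for every finite abelian group $G$ and every pair of distinct nonzero elements $a,b$ generating $G$, the Cayley digraph $\Cay(G;a,b)$ has two arc-disjoint Hamiltonian paths. I will confirm that DMM use the same convention for $\Cay$: arcs $g\to g+a$ and $g\to g+b$, with no implicit assumption that $G$ is cyclic or that $|G|$ exceeds some bound.

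Next I will dispose of the degenerate cases so that the matching is literally exact. If $|G|\le 2$, there are no two distinct nonzero elements. Both the conjecture and Theorem~\ref{thm:main} are then vacuous, as the remark after the theorem notes. For $|G|\ge3$, the hypotheses coincide verbatim, so Theorem~\ref{thm:main} yields the conclusion.

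The only place needing care is logical rather than mathematical. I must make sure that Theorem~\ref{thm:main} is not itself proved by assuming the conjecture. Its proof rests on the DMM reduction theorem, which reduces the conjecture to the two cyclic families $\Cay(\Z_k;a,a+1)$ and $\Cay(\Z_k;-a,a+1)$, together with this paper's independent treatment of those two families. I will therefore cite the DMM reduction as an unconditional implication, namely that the cyclic cases imply the general abelian case, and not as a use of the conjecture. With that checked, the corollary follows immediately.
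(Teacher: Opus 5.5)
Your proposal is correct and matches the paper, which obtains the corollary as an immediate consequence of Theorem~\ref{thm:main} after identifying DMM's Conjecture~1.3 with exactly this finite abelian two-generator statement. Your additional checks, namely the vacuous case $|G|\le2$ and the fact that the DMM reduction (Theorem~\ref{input:reduction}) is an unconditional implication rather than a use of the conjecture, are sound and consistent with the paper's own remarks.
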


For finite subsets $X,Y\subseteq\Z$, we write
\[
        \dist(X,Y)=\min\{|x-y|:x\in X,\ y\in Y\}.
\]

The central structural input is a reflection theorem for Hamiltonian cut values in the first cyclic family
\[
        \Cay(\Z_k;a,a+1),
        \qquad a\not\equiv0,-1\pmod{k}.
\]
The standard cut candidates in this family are parametrized by a cut value, and the Hamiltonian ones form a finite set $Z\subseteq\{0,1,\ldots,k-1\}$.  The count criterion of DMM asks for $d,e\in Z$ with
\[
        d+e\in\{k-2,k-1,k\}.
\]
Writing $N=k-1$, Theorem~\ref{thm:near-antipodal} proves the stronger structural estimate
\[
        \dist(Z,N-Z)\le1.
\]
The DMM parametrization identifies $Z$ with a slope-ordered multiplicity sequence of primitive rays.  In this form the cut-reflection theorem may be viewed as a reflection principle for ordered primitive-ray multiplicity systems satisfying the endpoint identity and sector-filling inequalities.  We prove the reflection estimate by showing that any reflected gap of width at least two would force, through sector filling, more intervening multiplicity than the corresponding extremal configuration permits.  More concretely, if pairs at distance $\delta\ge2$ are recorded in a reflected gap graph $\Gamma_\delta$, then each fully blocked extremal edge or loop is ruled out by the lower bound
\[
        \Theta(p,q)=\#\left\{(r,s)\in\Z_{>0}^2:\frac rp+\frac sq\le1\right\}
\]
for the multiplicity between primitive rays of multiplicities $p$ and $q$.  A sharper boundary-ray estimate handles the last endpoint cap of multiplicity two.  This is a near-antipodal pair statement: it asserts the existence of a reflected pair near the center, rather than a Hausdorff-type symmetry of the whole cut set.  The estimate is sharp modulo parity: odd $k$ gives exact reflection, while even $k$ forces distance one; Example~\ref{ex:first-family} shows the distance-one case.

The second cyclic family is
\[
        \Cay(\Z_k;-a,a+1),
        \qquad 2a+1\mid k,
\]
with distinct generators.  Modulo $2a+1$, the two generators move in the same quotient direction.  A quotient-position rule with $a+2$ positions assigned to the generator $-a$ gives a Hamiltonian cycle with fiber shift $-1$; the complementary cycle cover has fiber shift $+2$ and is either Hamiltonian or splits into two cycles that can be spliced by one arc of the first cycle.

We also prove the remaining three-factor case in the Cartesian-product problem for directed cycles.  Darijani, Miraftab, and Witte Morris proved the two-factor case and the case of at least four factors, and treated several three-factor subcases.  Theorem~\ref{thm:threecycles} proves the remaining three-factor case; combined with their two-factor and at-least-four-factor results, it gives Corollary~\ref{cor:all-cycle-products}.  The proof isolates a switchability condition that lifts a pair of disjoint Hamiltonian paths through one directed-cycle factor.

\begin{theorem}\label{thm:threecycles}
For all $m,n,\ell\ge2$, the Cartesian product
\[
        \vec C_m\Boxprod \vec C_n\Boxprod \vec C_\ell
\]
has two arc-disjoint Hamiltonian paths.
\end{theorem}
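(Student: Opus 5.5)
We reduce to a two-factor graph and lift one coordinate at a time.  Write $H=\vec C_m\Boxprod\vec C_n$ and $P=H\Boxprod\vec C_\ell$.  The vertex set of $P$ is $V(H)\times\Z_\ell$.  Each vertex has two ``horizontal'' out-arcs, inherited from $H$ in a fixed layer, and one ``vertical'' out-arc $(v,i)\to(v,i+1)$.  We use the two-factor theorem of DMM, which says that $H$ has two arc-disjoint Hamiltonian paths $A$ and $B$.  We strengthen it to a \emph{switchable pair}: $A,B$ arc-disjoint Hamiltonian paths of $H$, together with a vertex $v$ on both paths at which the successor arcs of $A$ and $B$ may be exchanged.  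Concretely, deleting the out-arc of $v$ in $A$ and inserting a vertical jump at $v$ must turn $A$ into a path that leaves layer $i$ and enters layer $i+1$ at the right place, and the same must hold for $B$ at a different vertex $w$.  The plan is to show that a switchable pair in $H$ lifts to two arc-disjoint Hamiltonian paths in $P$, and then to produce switchable pairs for all $m,n\ge2$.

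\textbf{Lifting step.}  Follow the standard layer-by-layer construction.  The first path $\mathcal A$ traverses a copy of $A$ in layer $0$ up to $v$, jumps vertically to layer $1$, and continues along a translate of $A$ in layer $1$ starting just after the jump, and so on.  Because $H$ is vertex-transitive (it is a Cayley digraph of $\Z_m\times\Z_n$), we can choose the translate of $A$ used in layer $i$ as $A+g_i$, where $g_i$ is the group element that makes the jump points consistent.  Then $\mathcal A$ visits every layer completely, using exactly one vertical arc between consecutive layers, at the points $(v+g_i,i)$.  Build $\mathcal B$ in the same way using $B$ with jump vertex $w$ and translates $B+h_i$.

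Arc-disjointness of horizontal arcs requires that in each layer $A+g_i$ and $B+h_i$ be arc-disjoint.  It suffices that $g_i=h_i$ for all $i$, since then the two paths in layer $i$ are a common translate of the disjoint pair $A,B$.  Vertical arcs are distinct provided $v+g_i\ne w+h_i$, i.e.\ $v\ne w$.  The switchability condition is exactly what makes $g_i=h_i$ compatible with both paths' jump bookkeeping: the entry point after the jump must be the start vertex of the translate.  This yields $\mathcal A,\mathcal B$ Hamiltonian in $P$, and the layers consume $\ell$ translates, so $\ell$ imposes no condition.

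\textbf{Producing switchable pairs.}  This step is the main obstacle.  DMM's two-factor paths are explicit, so we inspect them and check the start/end offsets.  The requirement is that the difference between the terminal vertex and initial vertex of $A$ equals that of $B$, since this difference determines $g_{i+1}-g_i$.  If $A$ runs from $x$ to $y$ and $B$ from $x'$ to $y'$, we need $y-x=y'-x'$ in $\Z_m\times\Z_n$ (after accounting for the vertical step).  We first try symmetry: take $B$ to be the image of $A$ under the automorphism swapping the two generators when $m=n$, or a reversal/translation trick otherwise.  Failing that, we modify the DMM paths locally by exchanging a $2\times2$ square of arcs between $A$ and $B$ to adjust endpoints.

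Cases where $\gcd(m,n)$ is small, and the tiny cases $m=2$ or $n=2$, will need separate explicit constructions.  We would also allow permuting the three factors, choosing the pair $(m,n)$ among the three cycle lengths so that a switchable pair exists, for instance taking the two factors with larger gcd.  A possible fallback is to use DMM's already treated three-factor subcases to cover the configurations where no switchable pair exists in any two-factor quotient.
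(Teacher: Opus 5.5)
Your proposal has a genuine gap. The problem is not only that the ``producing switchable pairs'' step is unfinished: the condition you lift cannot be satisfied in general.

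Each of your lifted paths uses translates of one fixed base path ($A+g_i$, resp.\ $B+h_i$). So the two copies in layer $i$ differ by $i\bigl((\tau_B-\iota_B)-(\tau_A-\iota_A)\bigr)$, which is why you must impose $\tau_A-\iota_A=\tau_B-\iota_B$. This fails in general, as follows.
\begin{enumerate}
\item Suppose $H=\Cay(\Z_m\times\Z_n;a,b)$ has no Hamiltonian cycle. A degree count shows that the two arcs of $H$ not used by $A\cup B$ are $\tau_A\to\iota_B$ and $\tau_B\to\iota_A$. Together with your condition this gives $2(\iota_A-\tau_A)\in\{2a,a+b,2b\}$.
\item If $A$ has $d$ arcs labelled $b$, then $\iota_A-\tau_A=a+d(a-b)$. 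So the condition says $(2d+j)(a-b)=0$ for some $j\in\{0,1,2\}$.
\item For $m,n$ odd and coprime, $a-b$ has order $mn$, and $2d+j\in\{0,2mn\}$ would give a single-label path. Hence $d=(mn-1)/2$: $A$ must use each generator exactly $(mn-1)/2$ times.
\end{enumerate}
Already $\vec C_3\Boxprod\vec C_5\cong\Cay(\Z_{15};5,6)$ has no such path. Every Hamiltonian path there is a translate of a cut path $P_d$, and $P_7$ closes after $5\to11\to1\to7$.

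Permuting the factors does not help. In $\vec C_5\Boxprod\vec C_7\Boxprod\vec C_9$ all three two-factor bases are coprime, so none has a Hamiltonian cycle. The required cut paths are $P_{17}$ in $\Cay(\Z_{35};14,15)$, $P_{31}$ in $\Cay(\Z_{63};27,28)$ and $P_{22}$ in $\Cay(\Z_{45};9,10)$. None is Hamiltonian: they close after $6$, $8$ and $7$ vertices respectively. So no pair of your type exists in any base of this product.

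Your remaining suggestions (symmetry, $2\times2$ swaps, small cases, an unspecified fallback to DMM's subcases) do not amount to an argument. You also never treat separately the case where the base has a Hamiltonian cycle.

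The missing idea is to let each lifted path \emph{alternate} between the two base paths $P,Q$. The paper takes one lifted path to be $P+p_0,\,Q+q_1,\,P+p_2,\dots$ and the other $Q+q_0,\,P+p_1,\,Q+q_2,\dots$. Here $p_0=q_0=0$, $q_{i+1}=p_i+(\tau_P-\iota_Q)$ and $p_{i+1}=q_i+(\tau_Q-\iota_P)$. The relative translation $q_i-p_i$ then takes only the two values $0$ and $\gamma=(\tau_P-\iota_Q)-(\tau_Q-\iota_P)$, rather than growing with $i$. So it suffices that $P\cap(Q+\gamma)=\varnothing$, $\tau_P\ne\tau_Q$ and $\tau_P\ne\tau_Q+\gamma$ (Lemma~\ref{lem:strong-lifting}); no equality of displacements is needed.

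These conditions are verified in Lemma~\ref{lem:two-cycle-strong}:
\begin{itemize}
\item Endpoint rigidity gives $\gamma\in\{0,\pm(a-b)\}$.
\item Opposite uniform labels handle the non-terminal cosets.
\item The terminal-coset interval description settles the terminal coset, using $1\le d\le L-2$, which follows from the absence of a Hamiltonian cycle.
\end{itemize}
When the base does have a Hamiltonian cycle, the paper instead uses the spanning subdigraph $\vec C_{mn}\Boxprod\vec C_\ell$ and the two-factor theorem.
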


\begin{corollary}\label{cor:all-cycle-products}
For every $r\ge2$, the Cartesian product of $r$ directed cycles of length at least two has two arc-disjoint Hamiltonian paths.  In particular, Conjecture~1.1 of Darijani--Miraftab--Witte Morris holds.
\end{corollary}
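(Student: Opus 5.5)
The statement is an assembly of three inputs: the two-factor theorem of DMM, the DMM theorem for at least four factors, and Theorem~\ref{thm:threecycles}. I would split on the number of factors $r\ge2$.

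For $r=2$, the product $\vec C_m\Boxprod\vec C_n$ with $m,n\ge2$ is exactly the case settled by DMM, so I would quote their two-factor result directly. For $r\ge4$, I would likewise quote their theorem for at least four factors. Before doing so, I would check that their hypotheses match the present one, namely that every factor is a directed cycle of length at least two. A $2$-cycle $\vec C_2$ is a digon, and some conventions treat it differently, so I would make sure the DMM statements cover factors of length two. If they exclude that case, I would instead reduce it through Theorem~\ref{thm:main}. Indeed, a product of directed cycles is $\Cay(\Z_{n_1}\times\cdots\times\Z_{n_r};e_1,\ldots,e_r)$. Theorem~\ref{thm:main} applies only when there are two generators, so this fallback works for $r=2$ but not beyond. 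For that reason I expect to rely on the DMM statements as written.

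For $r=3$, Theorem~\ref{thm:threecycles} gives the claim for all $m,n,\ell\ge2$. Putting the three cases together proves the first sentence. For the ``in particular'' clause, I would check that Conjecture~1.1 of DMM is precisely the assertion that every Cartesian product of at least two directed cycles (each of length at least two) has two arc-disjoint Hamiltonian paths. It is then literally the statement just proved.

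The only real obstacle is bookkeeping. I need to confirm that the ranges of the quoted DMM results (allowed lengths, and whether $r\ge4$ means at least four factors in every case) exactly complement Theorem~\ref{thm:threecycles}, so that no boundary case, such as factors of length $2$, is left uncovered.
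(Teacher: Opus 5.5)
Your proposal is correct and matches the paper's proof: the case $r=2$ is DMM's Theorem~4.4, the cases $r\ge4$ are DMM's Corollary~5.1, and $r=3$ is Theorem~\ref{thm:threecycles}. Your worry about factors of length two is already settled for $r=2$, since Theorem~\ref{input:twocycle-existence} is stated for all $m,n\ge2$; your fallback via Theorem~\ref{thm:main} would also work there, because $\Cay(\Z_m\times\Z_n;(1,0),(0,1))$ has two distinct nonzero generators that generate the group.
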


The proof of Corollary~\ref{cor:all-cycle-products} is given at the end of Section~\ref{sec:threecycles}, after Theorem~\ref{thm:threecycles} is proved.  Further directions are discussed after the proofs.

\section{Preliminaries from Darijani--Miraftab--Witte Morris}

All groups are written additively.  The directed Cayley digraph $\Cay(G;a,b)$ has vertex set $G$ and arcs
\[
        x\to x+a,
        \qquad
        x\to x+b
        \qquad (x\in G).
\]
If $P$ is a path or a spanning path-cycle cover, let $\delta_b(P)$ denote the number of arcs of $P$ labelled by $b$.

We use several results of Darijani, Miraftab, and Witte Morris \cite{DMM}.  The first is their count criterion for producing disjoint Hamiltonian paths from two Hamiltonian paths with suitable arc counts.  The second is the index-one cyclic specialization of their lattice parametrization of Hamiltonian cut values in the family with \(a-b=-1\), written below in the notation used in Section~\ref{sec:first-family}.  The third is their reduction of the finite abelian two-generator problem to two cyclic assertions.  The final two results are used for the directed-cycle product theorem in Section~\ref{sec:threecycles}.

\begin{theorem}[Count criterion, Darijani--Miraftab--Witte Morris, Proposition~4.3]\label{input:count}
Let $G$ be a finite abelian group generated by $a,b$.  Suppose there are Hamiltonian paths $P,Q$ in $\Cay(G;a,b)$, not necessarily distinct, such that
\[
        \delta_b(P)+\delta_b(Q)
        \in
        \{|G|-2,|G|-1,|G|\}.
\]
Then $\Cay(G;a,b)$ has two arc-disjoint Hamiltonian paths.
\end{theorem}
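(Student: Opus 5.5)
The plan is to reduce arc-disjointness to a combinatorial condition on the sets of vertices where each path uses the generator $b$, and then to find a translate of $Q$ meeting that condition. The setup is as follows. For a Hamiltonian path $P$ in $\Cay(G;a,b)$, let $S_P\subseteq G$ be the set of vertices whose outgoing arc in $P$ is labelled $b$, so that $|S_P|=\delta_b(P)$. Every vertex of $G$ except the terminal vertex $t_P$ has exactly one outgoing arc in $P$. Since translation by $g\in G$ is an automorphism of the Cayley digraph, $Q+g$ is again a Hamiltonian path with $S_{Q+g}=S_Q+g$. The paths $P$ and $Q+g$ are arc-disjoint exactly when, at every vertex $x\notin\{t_P,t_Q+g\}$, they leave $x$ along different generators, that is, $x\in S_P$ if and only if $x\notin S_Q+g$. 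So the goal is a translate $g$ for which $S_Q+g$ is essentially the complement of $S_P$, with the slack of at most two vertices absorbed by the terminal vertices. This is exactly why the count $\delta_b(P)+\delta_b(Q)$ must lie in $\{|G|-2,|G|-1,|G|\}$.

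The structural input is the standard rigidity of spanning subgraphs in two-generated abelian Cayley digraphs. Let $H=\langle a-b\rangle$. The vertex $x+a$ receives arcs only from $x$ (via $a$) and from $x+a-b$ (via $b$). In-degree at most one therefore forces the following: if $x$ uses $a$, then $x+a-b$ does not use $b$. Because $x+a-b$ does have an outgoing arc, it must use $a$, unless $x+a-b$ is the terminal vertex or $x+a$ is the initial vertex. Propagating this along $H$-cosets shows that $S_P$ is a union of $H$-cosets, up to a single "defect" interval lying in the coset containing the endpoints. Concretely, going around one coset starting from $t_P$, the labels are constant except on the stretch between the terminal vertex and the vertex preceding the initial vertex. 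I would record this as a lemma: $S_P=U_P\cup I_P$, where $U_P$ is a union of $H$-cosets and $I_P$ is an $H$-interval inside one further coset, determined by $t_P$ and the initial vertex $s_P$.

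With this description, I would build $g$ coset by coset. First, I would match the coset-union parts. The quotient $G/H$ is cyclic, generated by the common image of $a$ and $b$. Because each path is Hamiltonian, I expect the pattern of $b$-cosets to be a contiguous run in this cyclic quotient, forced by the requirement that the path sweeps the quotient in order. Given that, complementary runs can be aligned by a quotient translation, and the count hypothesis guarantees that the run lengths are complementary. Second, I would use the freedom of translating within $H$ to align the defect intervals $I_P$ and $I_Q+g$ inside the exceptional coset, so that they are complementary except at $t_P$ and $t_Q+g$. The three allowed values $|G|-2,|G|-1,|G|$ correspond to the cases where the two terminal vertices coincide or differ, and where each terminal lies inside or outside the other set. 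I would handle these cases one at a time, possibly replacing $Q$ by $P$ itself when $P=Q$ is allowed.

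I expect the main obstacle to be this last alignment. The run of cosets and the defect interval must be matched simultaneously by a single translate $g$. Moreover, the exceptional cosets of $P$ and $Q+g$ need not a priori line up with the complementary run. If direct alignment fails, I would first try to exploit the exact shape of the defect, an arithmetic progression in $H$ ending at the terminal vertex, to show that the exceptional coset is always at an end of the run. I would then check that the endpoint bookkeeping produces precisely the three permitted totals.
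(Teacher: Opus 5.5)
The paper does not prove this statement; it imports it from DMM (Proposition~4.3) and applies it only to $\Cay(\Z_k;a,a+1)$, where $H=\langle a-b\rangle=\Z_k$ is a single coset. In that setting your plan works: every Hamiltonian path is a translate of a standard path, and aligning the terminal-coset intervals by a translate is the bookkeeping you describe.

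For a general finite abelian $G$, however, there is a genuine gap. Your expectation that the $b$-cosets form a contiguous run in $G/H$ is false, and with it the strategy of keeping $P,Q$ and only translating $Q$. Put $r=|G:H|$ and let $c$ be the number of non-terminal $b$-cosets. Because $G$ is abelian, the first-return map to the terminal coset is translation by $(r-1-c)a+cb$ followed by the interval map. It does not see \emph{which} cosets carry $b$, so the $b$-cosets can sit anywhere.

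Here is a concrete failure. Take $\vec C_5\Boxprod\vec C_5=\Cay(\Z_5^2;(1,0),(0,1))$, put $C_s=\{(x,y):x+y\equiv s\}$, and use terminal vertex $(0,0)$.
\begin{itemize}
\item The path $P$ using $b$ exactly on $C_1\cup C_3$ is Hamiltonian, with $\delta_b(P)=10$.
\item The path $Q$ using $b$ exactly on $C_1\cup C_2\cup(C_0\setminus\{(0,0)\})$ is Hamiltonian, with $\delta_b(Q)=14$.
\end{itemize}
So $\delta_b(P)+\delta_b(Q)=24=|G|-1$. Every translate $Q+g$ uses $a$ only on two consecutive cosets $C_{s+3},C_{s+4}$. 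But $P$ uses $b$ on all five vertices of $C_1$ and of $C_3$. Hence no translate of $Q$ is arc-disjoint from $P$, and no endpoint bookkeeping can rescue the translation argument.

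The missing idea is to exploit this position-independence. For a labelling with your structure (uniform labels off the terminal coset, a $b$-interval of length $d$ on it), Hamiltonicity depends only on $(c,d)$, hence only on $\delta_b=c|H|+d$. So you should keep only the counts of $P,Q$ and build new paths $P',Q'$ with those counts and complementary coset labels. Write $\delta_b(P)=c|H|+d$ and $\delta_b(Q)=c'|H|+d'$ with $0\le d,d'<|H|$. The hypothesis forces one of three cases:
\begin{itemize}
\item $c+c'=r-1$ and $d+d'\in\{|H|-2,|H|-1,|H|\}$. Use a common terminal coset, complementary labels on the other $r-1$ cosets, and $\tau_{Q'}=x_d$ or $x_{d+1}$, where $x_j=\tau_{P'}+j(a-b)$.
\item $c+c'=r$ and $d=d'=0$.
\item $c+c'=r-2$ and $d=d'=|H|-1$.
\end{itemize}
In the last two cases the terminal cosets must differ. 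Each must lie in a coset where the other path uses the opposite uniform label, and the remaining $r-2$ cosets are split complementarily. Your terminal-coset alignment is the right ingredient for the first case, but without this rearrangement step the general statement is not reached.
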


\begin{theorem}[Lattice parametrization, Darijani--Miraftab--Witte Morris]\label{input:lattice}
In $\Cay(\Z_k;a,a+1)$, let
\[
        m=\ord(a),
        \qquad
        n=|\Z_k:\langle a\rangle|,
        \qquad
        k=mn,
\]
Since $n=|\Z_k:\langle a\rangle|$, the element $n(a+1)$ belongs to the cyclic subgroup $\langle a\rangle$.  Let $e$ be determined by
\[
        n(a+1)\equiv ea \pmod{k},
        \qquad 0\le e<m.
\]
Let
\[
        T=\operatorname{conv}\{(0,0),(n,0),(e,m)\},
        \qquad
        L(x,y)=mx+(n-e)y,
\]
and put $N=k-1=mn-1$.  In this family $a-(a+1)=-1$, so the arc-forcing index of Darijani--Miraftab--Witte Morris is one and the single triangle $T_0$ is the triangle $T$ above.  Let $A_1,\ldots,A_f$ be the primitive ray generators obtained by taking the two boundary primitive directions of $T$ and every non-boundary primitive ray generator $R$ for which at least one positive multiple of $R$ satisfies $L\le N$, ordered by slope.  If
\[
        H_r=\left\lfloor\frac{N}{L(A_r)}\right\rfloor,
\]
then the Hamiltonian cut values are
\[
        U_r=H_1+2\sum_{q=2}^{r}H_q,
        \qquad 1\le r<f.
\]
Thus
\[
        Z=\{U_1,\ldots,U_{f-1}\}.
\]
Moreover
\[
        U_{f-1}+H_f=N.
\]
\end{theorem}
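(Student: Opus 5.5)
Since this statement is the index-one specialization of the Darijani--Miraftab--Witte Morris lattice parametrization, I would not reprove their general theorem. Instead I would check that its hypotheses and data reduce to the stated ones when $b=a+1$. The first step is to record that $a-b=-1$ generates $\Z_k$. The arc-forcing index $|\Z_k:\langle a-b\rangle|$ is therefore one, and the DMM family of triangles $T_0,\dots,T_{j-1}$ collapses to the single triangle $T_0$. Next I would identify the DMM coordinates. The quotient $\Z_k/\langle a\rangle$ is cyclic of order $n$ and is generated by the image of $a+1$, so $n(a+1)\in\langle a\rangle$. This justifies writing $n(a+1)\equiv ea$ with $0\le e<m$ and $m=\ord(a)$. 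The relation lattice of the map $(x,y)\mapsto x(a+1)+ya$ (up to the DMM sign and ordering conventions) is then spanned by $(n,-e)$ and $(0,m)$. After the affine change DMM use, these vectors become the edges of $T=\operatorname{conv}\{(0,0),(n,0),(e,m)\}$. The linear form $L(x,y)=mx+(n-e)y$ is, up to that change of coordinates, the arc-count functional. This is checked by evaluating $L$ on the vertices: $L(n,0)=L(e,m)=mn=k$, so $L\le N=k-1$ cuts out exactly the lattice points strictly below the edge of $T$ opposite the origin.

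With this dictionary fixed, the cut-value formula is the DMM statement itself. A primitive ray $A_r$ contributes one candidate cut for each multiple $tA_r$ with $L(tA_r)\le N$, which gives $H_r=\lfloor N/L(A_r)\rfloor$ candidates. Passing from one slope-consecutive ray to the next changes the cut value by the number of lattice points swept. Boundary rays are swept from one side only, which gives weight $1$ for $H_1$. Interior rays are met from both sides, which gives weight $2$. This yields $U_r=H_1+2\sum_{q=2}^rH_q$. I would verify that the selection rule for rays (the boundary directions plus any interior primitive direction with some multiple satisfying $L\le N$) is exactly the rule under which DMM's consecutive rays bound the sectors they use.

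The one genuinely new assertion is the endpoint identity $U_{f-1}+H_f=N$, that is,
\[
H_1+H_f+2\sum_{q=2}^{f-1}H_q=N .
\]
This is the step I expect to be the main obstacle. I would prove it by counting. The left side counts the nonzero lattice points $P$ in the cone spanned by $T$ with $L(P)\le N$, with interior points counted twice and boundary points once. I would then exhibit a bijection between this weighted set and $\{1,\dots,N\}$, equivalently $\Z_k\setminus\{0\}$.

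To build the bijection, I would first try the central involution of $T$ that swaps the two edges at the origin, $(x,y)\mapsto(n-e,\,m)-(x,y)$ suitably reduced. This map should pair each interior point with a second copy. The step to check is that the residues $xa+y(a+1)$ attached to the points are pairwise distinct modulo $k$, which follows because the relation lattice has covolume $k$. Together with the fact that $\{L\le N\}$ contains exactly a fundamental-domain's worth of points (Pick's theorem on a triangle of area $k/2$), this would give the count $N$.

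If the bijection proves awkward, the fallback is to appeal to DMM's own observation. In their setup the final cut value plus the last ray's multiplicity is the total number of $b$-arcs in a spanning structure with $N$ arcs, and that total is $N$ automatically.
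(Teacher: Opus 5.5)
Your route for the parametrization matches the paper's. The paper gives no independent argument for this theorem: it specializes DMM (Notation~3.19, Theorem~3.21, Lemma~3.23) to $a-b=-1$, where the arc-forcing index is one and only the triangle $T$ occurs. The endpoint identity $U_{f-1}+H_f=N$ is not a new assertion. It is part of the same cited DMM package (Lemma~3.23), so your ``fallback'' is exactly the paper's argument, and the dictionary check is all the paper does.

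Your self-contained counting proof of that identity, however, has a genuine gap. Take $H_1=n-1$, $H_f=\gcd(e,m)-1$, and $\sum_{q=2}^{f-1}H_q=I$, the number of interior lattice points of $T$. Then Pick's theorem gives
\[
H_1+H_f+2\sum_{q=2}^{f-1}H_q=k-\gcd(n-e,m).
\]
Equivalently, count the $k$ lattice points of the half-open parallelogram spanned by $(n,0)$ and $(e,m)$, whose two halves are exchanged by $P\mapsto(n+e,m)-P$. Your map $(x,y)\mapsto(n-e,m)-(x,y)$ does not in general carry $T$ to its complementary half.

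So the identity holds exactly when the far edge of $T$ contains no lattice points besides its endpoints. Nothing in your sketch supplies this: not the area $k/2$, not covolume $k$, not distinct residues. For a triangle of the same area with lattice points inside its far edge, the same weighted count falls short of $N$.

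The missing input is arithmetic. From $n(a+1)\equiv ea\pmod k$ one gets $(n-e)a\equiv -n\pmod k$. The right side has order $k/n=m$, and the left side has order $m/\gcd(n-e,m)$, so $\gcd(n-e,m)=1$. With this line added, your Pick count does give a direct proof of $U_{f-1}+H_f=N$, independent of the citation.
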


This theorem is the $a-b=-1$ specialization of the notation and results of DMM \cite[Notation~3.19, Theorem~3.21, Lemma~3.23]{DMM}.  The arc-forcing index is one, so their only triangle is $T$, and the ordered list above is the ray list used in Section~\ref{sec:first-family}.  For every ray generator $R$ in this list,
\[
        H(R)=\left\lfloor \frac{N}{L(R)}\right\rfloor
\]
is the number of positive multiples
\[
        R,2R,\ldots,H(R)R
\]
on that ray with $L\le N$.  Each boundary multiplicity may be zero, and both may be zero in some cases.  Sector estimates below are applied only to blocks of positive multiplicity.

In the nondegenerate family considered in Theorem~\ref{thm:first-family}, the triangle has two boundary directions, so $f\ge2$ and $Z$ is nonempty.  Throughout Section~\ref{sec:first-family}, a sum of multiplicities between two rays is a sum of the counted multiples $H(R)$ over the intervening ray generators in this ray list.

\begin{example}[The lattice encoding for $k=10$, $a=4$]\label{ex:lattice-10-4}
Here $m=\ord_{\Z_{10}}(4)=5$, $n=\gcd(10,4)=2$, and $e=0$, so
\[
        T=\operatorname{conv}\{(0,0),(2,0),(0,5)\},
        \qquad L(x,y)=5x+2y,
        \qquad N=9.
\]
The primitive ray generators in slope order are
\[
        (1,0),\ (1,1),\ (1,2),\ (0,1),
\]
with multiplicities
\[
        1,\quad 1,\quad 1,\quad 4.
\]
Thus
\[
        U_1=1,
        \qquad U_2=3,
        \qquad U_3=5,
\]
and hence $Z=\{1,3,5\}$.  Here $c_L=1$, $c_R=N-5=4$, and the two internal gap multiplicities are both $1$.  This example will be used again after the cut-reflection theorem.
\end{example}

\begin{theorem}[Reduction to two cyclic families, Darijani--Miraftab--Witte Morris]\label{input:reduction}
Assume the following two cyclic assertions hold.
\begin{enumerate}[label=(\roman*)]
\item For every $k\ge3$ and every $a\in\Z_k$ with $a\not\equiv0,-1\pmod{k}$, the digraph $\Cay(\Z_k;a,a+1)$ has two arc-disjoint Hamiltonian paths.
\item For every $a\ge1$ and $k=(2a+1)L$ with $L\ge2$, the digraph $\Cay(\Z_k;-a,a+1)$ has two arc-disjoint Hamiltonian paths.
\end{enumerate}
Then Theorem~\ref{thm:main} holds.
\end{theorem}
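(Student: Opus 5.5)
The plan is to follow Darijani--Miraftab--Witte Morris \cite{DMM} and reduce everything to Theorem~\ref{input:count}. So the task is to show that for a generating pair $a,b$ of a finite abelian group $G$, outside the two cyclic families (i) and (ii), there are Hamiltonian paths $P,Q$ with $\delta_b(P)+\delta_b(Q)\in\{|G|-2,|G|-1,|G|\}$. The natural invariant is the subgroup $\langle a-b\rangle$. In $G/\langle a-b\rangle$ the images of $a$ and $b$ coincide, so this quotient is cyclic of order $n=|G:\langle a-b\rangle|$ and is generated by that common image. Write $m=\ord(a-b)$, so that $|G|=mn$.

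The first step is to use the classical description of Hamiltonian paths and cycles in two-generated abelian Cayley digraphs (Housman, Curran--Witte). Away from its endpoints, a Hamiltonian path is determined by which vertices of a ``cut'' use $a$ and which use $b$. The admissible cuts are parametrized by lattice data attached to the pair $(m,n)$ and to the element $e$ with $n\,a\equiv e(a-b)$. This gives, for each $G$, an explicit set of realizable values of $\delta_b$. Theorem~\ref{input:lattice} is the cyclic, index-one instance of this description.

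The second step is to exhibit, case by case, two realizable values summing into the target window.
\begin{enumerate}[label=(\alph*)]
\item If $G$ is not cyclic, or $\langle a-b\rangle$ is a proper subgroup with suitable index, the realizable values contain an arithmetic-progression-like family. An explicit pair (for instance a path and its ``mirror'' under $a\leftrightarrow b$) gives complementary counts $d$ and $|G|-1-d$.
\item The symmetry $a\leftrightarrow b$ sends $\delta_b$ to $|G|-1-\delta_b$ on paths whenever the reversed configuration is also Hamiltonian. So any case where both $\Cay(G;a,b)$ and $\Cay(G;b,a)$ have the same cut structure is immediate.
\end{enumerate}
What survives is the situation where $\langle a-b\rangle=G$, so $G=\Z_k$ and, after an automorphism, $b=a+1$. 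This is family (i). The other survivor is when $a+b$ plays the distinguished role, which after an automorphism becomes $\{-a,a+1\}$ with $2a+1\mid k$ and the quotient $\Z_k/\langle 2a+1\rangle$ nontrivial. This is family (ii).

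Finally, distinctness and nonzeroness of the generators translate into $a\not\equiv0,-1$ in family (i) and $L\ge2$ in family (ii).

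The main obstacle is the second step, namely controlling the realizable values in all non-exceptional cases uniformly. The natural first move is to use the lattice parametrization in general index and to show that reflection about $(|G|-1)/2$ is automatic there, because the two boundary rays have equal multiplicity. The families (i) and (ii) are then exactly where this symmetry breaks. Since the statement is quoted verbatim from \cite{DMM}, in the paper itself the proof is the citation to their reduction theorem, and the outline above indicates what that citation contains.
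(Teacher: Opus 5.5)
\paragraph{Verdict: genuine gap.} As you note, the paper gives no proof of this statement: it is imported from DMM, and the citation is the entire argument. Read as a proof, however, your outline has a genuine gap at its second step, and that step is the whole content of the reduction. Nothing in the proposal shows that every generating pair outside (i) and (ii) yields Hamiltonian paths $P,Q$ with $\delta_b(P)+\delta_b(Q)\in\{|G|-2,|G|-1,|G|\}$, or two arc-disjoint Hamiltonian paths by some other construction. Items (a) and (b) are assertions, ``suitable index'' is never specified, and the identification of the surviving cases with (i) and (ii) is not derived.

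\paragraph{Why the proposed mechanisms fail.} Both mechanisms you offer for the second step fail in general.
\begin{itemize}
\item \emph{Label swap.} Exchanging $a\leftrightarrow b$ is a symmetry of $\Cay(G;a,b)$ only when some automorphism of $G$ swaps $a$ and $b$. Otherwise, flipping all labels of a Hamiltonian path essentially never gives a Hamiltonian path. Arc forcing says that if $x$ leaves by $a$, then $x+(a-b)$ also leaves by $a$ unless it is the terminal vertex. This forces the $b$-block to precede the $a$-block on the terminal coset $\tau_P+\langle a-b\rangle$. Flipping reverses that order, so only configurations with one empty block survive.
\item \emph{Equal boundary multiplicities.} By themselves, these give only the extreme pair. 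In index one, $z_0+z_s=N+c_L-c_R$, with $c_L,c_R$ as in Lemma~\ref{lem:endpoint-caps}. In general, the minimal and maximal values of $\delta_b$ over Hamiltonian paths are $|G:\langle a\rangle|-1$ and $|G|-|G:\langle b\rangle|$.
\end{itemize}

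\paragraph{A counterexample to both.} Take $\vec C_2\Boxprod\vec C_4=\Cay(\Z_2\times\Z_4;(1,0),(0,1))$.
\begin{itemize}
\item The generators have different orders, so no automorphism swaps them.
\item The extreme counts are $3$ and $6$, with sum $9\notin\{6,7,8\}$.
\item The group is not cyclic, so this digraph lies in neither (i) nor (ii). This contradicts your claim that (i) and (ii) are exactly where the symmetry breaks.
\end{itemize}
A working pair does exist here. The counts $3$ and $4$ arise by deleting a $b$-arc, respectively an $a$-arc, from the Hamiltonian cycle that uses $a$ on $\langle a-b\rangle$ and $b$ on the other coset. But this pair comes from neither of your mechanisms. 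DMM cover such cases by separate results, such as Theorem~\ref{input:twocycle-existence}.

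\paragraph{A smaller inaccuracy.} Your lattice data $(\ord(a-b),|G:\langle a-b\rangle|)$ degenerates to $(k,1)$ in index one. Theorem~\ref{input:lattice} is built from $\ord(a)$ and $|\Z_k:\langle a\rangle|$, so it is not the index-one instance of your data.
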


Sections~\ref{sec:first-family} and~\ref{sec:second-family} prove the two assertions required by the reduction.  In the second assertion, $L=1$ would give duplicate generators, so it is outside the two-element generating-set hypothesis.

\begin{theorem}[Two directed cycles, Darijani--Miraftab--Witte Morris~{\cite[Theorem~4.4]{DMM}}]\label{input:twocycle-existence}
For all $m,n\ge2$, the Cartesian product $\vec C_m\Boxprod\vec C_n$ has two arc-disjoint Hamiltonian paths.
\end{theorem}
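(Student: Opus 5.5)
The statement is the two-factor theorem of DMM, and I would prove it through the count criterion, Theorem~\ref{input:count}. Write $\vec C_m\Boxprod\vec C_n=\Cay(\Z_m\times\Z_n;a,b)$ with $a=(1,0)$ and $b=(0,1)$. It then suffices to exhibit Hamiltonian paths $P,Q$, possibly equal, with
\[
\delta_b(P)+\delta_b(Q)\in\{mn-2,\,mn-1,\,mn\}.
\]
Taking $P=Q$, this asks for a single Hamiltonian path whose numbers of $a$-arcs and $b$-arcs are as balanced as possible. Since a Hamiltonian path has $mn-1$ arcs, the target is $\delta_b(P)\in\{\lceil (mn-2)/2\rceil,\ldots,\lfloor mn/2\rfloor\}$. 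So the plan is to understand which values $\delta_b$ can take on Hamiltonian paths and show that the attainable set reaches the middle.

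The first step is to build the basic family of ``snake'' paths. Repeat the word $b^{n-1}a$ exactly $m$ times and delete the final $a$. This sweeps each coset of $\langle b\rangle$ in turn, entering the $i$-th coset at $(i,-i)$, so it is Hamiltonian with $\delta_b=m(n-1)$. The symmetric word $a^{m-1}b$ gives $\delta_b=n-1$. Together these two settle the case $|m-n|\le1$ directly.

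For general $m,n$ I would interpolate between these extremes using staircase paths built from a mixture of blocks. In each of the $m$ cosets of $\langle b\rangle$, traverse a segment of $b$-arcs of length $t_i$ and then step by $a$, with the lengths chosen so that the path stays injective. The standard tool here is the Rankin--Housman description of Hamiltonian paths in two-generated abelian Cayley digraphs. A Hamiltonian path is governed by a ``cut'': a set of vertices at which the path uses $a$ rather than $b$, forming a staircase region whose boundary determines $\delta_b$. I would parametrize these staircases, in the same spirit as Theorem~\ref{input:lattice}, by pairs $(x,y)$ with $x$ columns of one type and $y$ of the other. The aim is to show that the cut sizes realized, and hence the values of $\delta_b$, change in steps of bounded size, at most $2$ on a suitable chain of staircases. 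A discrete intermediate-value argument along that chain then produces either a single path $P$ with $2\delta_b(P)$ in the target window, or two consecutive paths $P,Q$ on the chain whose counts straddle it appropriately.

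The main obstacle is the step-size control. Hamiltonicity of a staircase path depends on a gcd condition on $(m,n)$ and the staircase parameters, and admissible cut values can have gaps, exactly as in the first cyclic family treated later. I would first try small explicit modifications of the snake: replace one block $b^{n-1}a$ by $b^{n-2}a\,b\,a^{-}$-type detours, which shift $\delta_b$ by one. I would then check that iterating such detours row by row keeps the path Hamiltonian. If that fails for certain residue classes, I would fall back on using two different paths $P\neq Q$ from opposite ends of the family, so that $\delta_b(P)+\delta_b(Q)$ rather than $2\delta_b(P)$ lands in the window. This is precisely the flexibility that the count criterion allows.
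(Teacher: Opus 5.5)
\textbf{Gap: the construction for $m,n\ge3$, $|m-n|\ge2$ is missing.}

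Your reduction to Theorem~\ref{input:count} and your two snake computations are correct. The snakes settle slightly more than you say: if $m=2$, the column snake alone has $2\delta_b=2(n-1)=mn-2$, so $P=Q$ works, and symmetrically for $n=2$. However, the count condition is necessary as well as sufficient. If $P,Q$ are arc-disjoint Hamiltonian paths, then at every vertex other than $\tau_P,\tau_Q$ they leave along different generators, so $\delta_b(P)+\delta_b(Q)\in\{mn-2,mn-1,mn\}$ automatically. The reduction is therefore just a restatement of the theorem. All of its content lies in producing Hamiltonian paths with suitable $b$-counts when $m,n\ge3$ and $|m-n|\ge2$. For these cases the proposal constructs nothing and checks no Hamiltonicity. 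Phrases like ``the aim is'', ``I would first try'' and ``if that fails'' describe a programme, not an argument. As written, the statement is proved only when $|m-n|\le1$ or $\min(m,n)=2$.

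\textbf{The proposed heuristics would fail as stated.} The detour $b^{n-2}a\,b\,a^{-}$ uses an $a^{-1}$-arc, which does not exist in the directed digraph. More fundamentally, the Rankin-type propagation argument behind Lemma~\ref{input:twocycle-structure}(ii),(iii) shows that a Hamiltonian path is determined by three pieces of data: its terminal vertex, the length $d$ of the $b$-interval in its terminal coset of $\langle a-b\rangle$, and a single label on each of the other $\gcd(m,n)-1$ cosets. Thus $\delta_b=d+jL$ with $L=\operatorname{lcm}(m,n)$, and Hamiltonicity means that an explicit permutation of $\Z_L$ is a single cycle.

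Consider the case $\gcd(m,n)=1$. Then $b-a$ generates $\Z_m\times\Z_n\cong\Z_{mn}$, so $\vec C_m\Boxprod\vec C_n\cong\Cay(\Z_{mn};a',a'+1)$, and the attainable $b$-counts are exactly the cut set $Z$ of Section~\ref{sec:first-family}. This breaks both of your heuristics:
\begin{itemize}
\item All elements of $Z$ have the same parity, because $\Phi_d$ is a translation composed with a $(d+1)$-cycle, so its sign fixes the parity of $d$. Hence no Hamiltonicity-preserving modification can change $\delta_b$ by one.
\item $Z$ can have gaps larger than two. For example, $\vec C_3\Boxprod\vec C_7\cong\Cay(\Z_{21};14,15)$ has $Z=\{6,8,10,12,16,18\}$; the value $14$ is excluded because $\Phi_{14}$ fixes $14$. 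So the bounded-step chain you want does not exist, and your intermediate-value argument has nothing to run on.
\end{itemize}

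Deciding whether two admissible values sum into the window is a genuine reflection problem. In the coprime case it is an instance of Theorem~\ref{thm:first-family}. In general it requires DMM's analysis with arc-forcing index $\gcd(m,n)$, which the paper imports from DMM rather than reproving. A complete proof along your lines must supply that Hamiltonicity criterion and an argument that produces the required pair of values.
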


\begin{lemma}[Endpoint-and-coset consequence of Darijani--Miraftab--Witte Morris]\label{input:twocycle-structure}
Let
\[
        D=\Cay(\Z_m\times\Z_n;a,b)
\]
be a product of two directed cycles, and assume that $D$ has no Hamiltonian directed cycle.  Then there exist arc-disjoint Hamiltonian paths $P,Q$ in $D$, with initial and terminal vertices
\[
        \iota_P,\tau_P,
        \qquad
        \iota_Q,\tau_Q,
\]
satisfying the following three properties simultaneously.
\begin{enumerate}[label=(\roman*),leftmargin=*]
\item \emph{Endpoint rigidity} \cite[Proposition~4.6]{DMM}.  One has
\[
        \iota_Q\in\{\tau_P+a,\tau_P+b\},
        \qquad
        \tau_Q\in\{\iota_P-a,\iota_P-b\}.
\]
The assumption that $D$ has no Hamiltonian directed cycle ensures that no generator edge joins $\tau_P$ to $\iota_P$; otherwise $P$ would close to such a cycle.

\item \emph{Terminal-coset interval and endpoint formula} \cite[Lemma~3.9]{DMM}.  Put $w=a-b$, write $L=\ord(w)$, and write the terminal coset of $P$ as
\[
        x_r=\tau_P+rw,
        \qquad r\in\Z_L.
\]
There is an integer $d$, $0\le d<L$, such that
\[
        P_b=\{x_1,\ldots,x_d\},
        \qquad
        P_a=\{x_{d+1},\ldots,x_{L-1}\}.
\]
Here $P_b$ and $P_a$ are the sets of vertices in the terminal coset whose outgoing edge in $P$ is labelled by $b$ and by $a$, respectively.  The same description determines the initial endpoint:
\[
        \iota_P=\tau_P+a+dw.
\]
The analogous formula applies to any path to which the same terminal-coset description is applied.

\item \emph{Uniform travel away from the terminal coset} \cite[Lemma~3.9]{DMM}.  The path $Q$ has the same terminal coset as $P$.  On every other coset of $\langle w\rangle$, both paths travel uniformly, and they use opposite labels on each such coset.
\end{enumerate}
\end{lemma}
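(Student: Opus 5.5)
The statement collects three facts about one suitably chosen pair $P,Q$. I would take any pair of arc-disjoint Hamiltonian paths supplied by Theorem~\ref{input:twocycle-existence}. I would then check (i) by a degree count, and derive (ii) and (iii) from the coset analysis of \cite[Lemma~3.9]{DMM}, applied to $P$ and to $Q$ separately.

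\emph{Step 1: endpoint rigidity.} In $D$ every vertex has in- and out-degree $2$, so $D$ has $2|G|$ arcs, where $G=\Z_m\times\Z_n$. The two paths together use $2|G|-2$ arcs, so exactly two arcs are unused. A vertex $v$ has $[v\ne\tau_P]+[v\ne\tau_Q]$ used out-arcs. Hence the tails of the unused arcs are $\tau_P$ and $\tau_Q$, counted with multiplicity, and dually their heads are $\iota_P$ and $\iota_Q$. If an unused arc were $\tau_P\to\iota_P$, then $P$ would close to a Hamiltonian cycle, which the hypothesis forbids. The case $\tau_P=\tau_Q$ is also impossible: both unused out-arcs of $\tau_P$ would have to avoid $\iota_P$, yet their heads must be $\iota_P$ and $\iota_Q$. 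So the unused arcs are $\tau_P\to\iota_Q$ and $\tau_Q\to\iota_P$. Since every arc is labelled $a$ or $b$, this gives (i).

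\emph{Step 2: single-path structure.} Next I would apply \cite[Lemma~3.9]{DMM} to $P$, using the identity $x+a=(x+w)+b$. It says that on each coset of $\langle w\rangle$ other than the terminal one, a Hamiltonian path travels uniformly. Otherwise some $x$ uses $b$ while $x-w$ uses $a$, so two vertices enter $x+b$. On the terminal coset the same identity forces the $b$-vertices $x_1,\dots,x_d$ to form an initial interval after $\tau_P$. Tracing which vertex of the next coset has no predecessor then yields $\iota_P=\tau_P+a+dw$. This is (ii) for $P$, and the same argument applies verbatim to $Q$ with its own parameter $d'$.

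\emph{Step 3: the coupling (iii), which I expect to be the main obstacle.} Where both $P$ and $Q$ travel uniformly, arc-disjointness at every vertex forces opposite labels. What remains is to show that $Q$'s terminal coset equals $P$'s. The difficulty is the degenerate values $d\in\{0,L-1\}$, where $P$ is itself almost uniform on its terminal coset, so a bare label comparison does not distinguish the cosets. My plan is to combine the endpoint formulas of Step~2, $\iota_P=\tau_P+a+dw$ and $\iota_Q=\tau_Q+a+d'w$, with (i). Since $a\equiv b \pmod{\langle w\rangle}$, (i) places $\iota_Q$ in the coset $\tau_P+a+\langle w\rangle$ and $\tau_Q$ in the coset $\iota_P-a+\langle w\rangle=\tau_P+\langle w\rangle$. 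Thus $\tau_Q$ lies in $P$'s terminal coset, so the two terminal cosets coincide, and (iii) follows from the uniform-travel statement. I would double-check this coset bookkeeping in the small case $L=1$, where every coset is a single vertex.
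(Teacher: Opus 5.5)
Your proposal is correct. For (iii) it follows the paper's argument. The endpoint formula $\iota_P=\tau_P+a+dw$ and the second relation in (i) give $\tau_Q\in\{\iota_P-a,\iota_P-b\}\subseteq\tau_P+\langle w\rangle$, so the two terminal cosets coincide. Uniform travel on each nonterminal coset then forces opposite labels, by arc-disjointness.

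The difference lies in how (i) and (ii) are obtained:
\begin{itemize}
\item The paper fixes the pair from DMM Theorem~4.4 and cites DMM Proposition~4.6 for (i) and Lemma~3.9 for (ii).
\item You prove endpoint rigidity directly by counting the two unused arcs. Their tails form the multiset $\{\tau_P,\tau_Q\}$ and their heads the multiset $\{\iota_P,\iota_Q\}$. The no-Hamiltonian-cycle hypothesis excludes $\tau_P\to\iota_P$, which forces $\tau_P\ne\tau_Q$ and leaves exactly the arcs $\tau_P\to\iota_Q$ and $\tau_Q\to\iota_P$.
\item You also sketch the coset argument behind Lemma~3.9 via $x+a=(x+w)+b$. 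Your in-degree-zero tracing does give $\iota_P=x_d+a$: the only possible predecessors of $x_d+a$ are $x_d$ (which is either $\tau_P$ or uses $b$) and $x_{d+1}$ (which is either $\tau_P$ or uses $a$).
\end{itemize}

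Your route is self-contained. It shows that all three properties hold for \emph{every} pair of arc-disjoint Hamiltonian paths in $D$, so the paper's caveat that the cited facts are applied to one fixed pair becomes unnecessary. The paper's route is shorter but relies entirely on DMM.

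Two small remarks:
\begin{itemize}
\item $L=\ord(w)\ge 2$ automatically, since $a\ne b$, so your planned check of $L=1$ is vacuous.
\item The values $d\in\{0,L-1\}$ are not an obstacle for your coset bookkeeping. In fact they cannot occur here, because they give an arc $\tau_P\to\iota_P$; the paper uses exactly this observation later, in the proof of Lemma~\ref{lem:two-cycle-strong}.
\end{itemize}
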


\begin{proof}
Choose $P,Q$ as in \cite[Theorem~4.4]{DMM}.  These paths are fixed throughout the proof; the endpoint rigidity and terminal-coset description cited below are applied to this same pair, and no new choice of paths is made.  Since $D$ has no Hamiltonian directed cycle, no generator arc joins $\tau_P$ to $\iota_P$; otherwise $P$ would close to such a cycle.  \cite[Proposition~4.6]{DMM} therefore gives the endpoint rigidity in (i).

Put $w=a-b$.  By \cite[Lemma~3.9]{DMM}, the terminal-coset interval for $P$ gives, in particular, the endpoint formula
\[
        \iota_P=\tau_P+a+dw
\]
for some $d$.  Hence
\[
        \iota_P-a=\tau_P+dw,
        \qquad
        \iota_P-b=\tau_P+(d+1)w.
\]
The second endpoint relation in (i) gives
\[
        \tau_Q\in\{\iota_P-a,\iota_P-b\}
        \subseteq \tau_P+\langle w\rangle.
\]
Thus $P$ and $Q$ have the same terminal coset.  Applying the same lemma to both paths gives uniform travel on every nonterminal coset.  If the two paths used the same label on such a coset, they would contain the same outgoing Cayley arc from every vertex of that coset, contradicting arc-disjointness.  Hence the labels on each nonterminal coset are opposite.
\end{proof}

\section{The first cyclic family}\label{sec:first-family}

We prove the first cyclic assertion in Theorem~\ref{input:reduction}.

\begin{theorem}\label{thm:first-family}
Let $k\ge3$, and let $a\in\Z_k$ satisfy $a\not\equiv0,-1\pmod{k}$.  Then
\[
        D(k,a)=\Cay(\Z_k;a,a+1)
\]
has two arc-disjoint Hamiltonian paths.
\end{theorem}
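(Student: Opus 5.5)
The proof reduces Theorem~\ref{thm:first-family} to the count criterion, Theorem~\ref{input:count}, via the cut-reflection estimate $\dist(Z,N-Z)\le1$. Each $z\in Z$ is a Hamiltonian cut value. By the DMM parametrization (Theorem~\ref{input:lattice}) it is realized by a Hamiltonian path $P_z$ in $D(k,a)$ whose number of $(a+1)$-arcs equals $z$ (or an affine function of $z$ fixed by the DMM normalization; I would check the convention so that the target sum is exactly $\{k-2,k-1,k\}$). So it suffices to find $d,e\in Z$ with $d+e\in\{N-1,N,N+1\}$. That is the same as $|d-(N-e)|\le1$, i.e.\ $\dist(Z,N-Z)\le1$. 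Theorem~\ref{input:count} applied to $P=P_d$, $Q=P_e$ then gives the result. Note that $d=e$ is allowed, since Theorem~\ref{input:count} permits $P=Q$.

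The main work is the reflection estimate. Write $Z=\{U_1<\dots<U_{f-1}\}$ with $U_1=H_1=c_L$. Consecutive gaps are $U_r-U_{r-1}=2H_r$, and the endpoint identity $U_{f-1}+H_f=N$ holds, so the right cap is $c_R=H_f$. I would argue by contradiction. Suppose every pair $(U_i,U_j)$ has $|U_i+U_j-N|\ge2$. Consider the reflected sequence $N-U_j$ interleaved with the $U_i$. Since both sequences are monotone, there is a crossing index where $U_i+U_j-N$ changes sign, and at that point a jump of size at least $4$ must occur: either a gap $2H_r\ge4$ (so $H_r\ge2$), or a cap with $c_L$ or $c_R$ too large relative to the other.

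Such a configuration is ruled out by sector filling. Suppose a ray $A_r$ has multiplicity $H_r\ge2$, and pairs of rays at the crossing have multiplicities $p,q$. Then the rays strictly between them, of lower slope complexity, must carry multiplicity at least $\Theta(p,q)=\#\{(r,s)\in\Z_{>0}^2: r/p+s/q\le1\}$. This is because every lattice point $rA+sB$ with $L\le N$ lies on some intermediate primitive ray, and $L$ is linear. Summing these contributions shows the intervening $U$-values are dense enough to produce a pair with $|U_i+U_j-N|\le1$. To organize the cases I would build the extremal graph $\Gamma_\delta$ on indices, recording pairs at the minimal reflected distance $\delta\ge2$. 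I would then take an extremal edge or loop that is fully blocked, meaning no intermediate values on either side, and derive a contradiction from the $\Theta$ bound.

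I expect the main obstacle to be the boundary cases: an endpoint cap of multiplicity exactly two, and degenerate rays where $H_1=0$ or $H_f=0$. There the triangle-interior $\Theta$ count is too weak, and one needs a sharper count of lattice points adjacent to the boundary edge of $T$. My first attempt would use the boundary edge from $(0,0)$ to $(n,0)$ or to $(e,m)$, together with $L\le N=mn-1$, to show that some intermediate ray has multiplicity $1$ at the right position. Parity is handled separately. For odd $k$, $N$ is even, and the aim is an exact reflection $d+e=N$ from the midpoint structure. For even $k$, distance one suffices, as in Example~\ref{ex:lattice-10-4}, where $3+5=8=N-1$.
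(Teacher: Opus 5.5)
The first step of your proposal is exactly the paper's proof of this theorem. $P_d$ has precisely $d$ arcs labelled $a+1$, so no affine correction is needed. The estimate $\dist(Z,N-Z)\le1$ gives $d,e\in Z$ (possibly equal) with $d+e\in\{k-2,k-1,k\}$, and Theorem~\ref{input:count} finishes. Parity needs no separate treatment: exact reflection for odd $k$ is a consequence of the estimate, not an ingredient.

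The gap is in your argument for the estimate itself (Theorem~\ref{thm:near-antipodal}), which carries all the content. Sector filling only gives \emph{lower} bounds on the multiplicity between two large blocks, and your sketch never produces an \emph{upper} bound for it to contradict. A single gap $2H_r\ge4$ at a sign crossing is compatible with sector filling, and ``the intervening $U$-values are dense enough to produce a pair'' is not an argument. The missing idea is the reflected move:
\begin{itemize}
\item Take a fully blocked negative edge $z_i+z_j=N-\delta$ with blockers $A=\lambda_i\le B=\lambda_j$, both $>\delta$. Step across the smaller blocker and reflect: $T=N-z_{i+1}=z_j+\delta-2A$.
\item Minimality of $\delta$ puts $T$ in a $\delta$-core. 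The bound $M(G_i,G_j)\ge\Theta(A,B)\ge A-1$ keeps $T$ in $[z_{i+1},z_j)$, so $T$ lies in the core $[z_t+\delta,z_{t+1}-\delta]$ of an internal gap of multiplicity $\gamma>\delta$. (The case $\gamma=\delta$ would create a forbidden extra edge.)
\item Writing $T=z_t+\delta+2\ell$ gives the upper bound $M(G_t,G_j)=A+\ell-\gamma\le A-\delta$.
\item Sector filling gives $M(G_t,G_j)\ge\Theta(\gamma,B)\ge(\gamma-1)(B-1)/2>A-\delta$.
\end{itemize}
That comparison, not density, is what kills the configuration.

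Two further pieces are missing.

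First, the existence of a fully blocked edge or isolated loop is itself a lemma. The end edge of a longer sign-alternating path in $\Gamma_\delta$ is blocked on one side only. So you must show that every non-loop component is a single edge and every loop is isolated. This uses propagation ($\lambda_i=\delta$ flips the sign of an edge) together with the fact that two adjacent blocks cannot both have multiplicity $\ge2$, which follows from $\Theta(p,q)\ge1$.

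Second, one comparison does not close every case.
\begin{itemize}
\item For an edge $z_i+z_s=N-\delta$ blocked at the right end of $Z$, the reflected move only yields $\Theta(\gamma,c_R)\le\alpha-\delta$. That is no contradiction when $c_R$ is small. The paper first proves prefix rigidity ($c_L=0$, all earlier gaps equal to $1$, $c_R<\alpha$). It then reflects $z_t$ a second time to obtain the incompatible inequalities $\gamma\ge\delta+2+\delta(\alpha-1)/2$ and $\alpha\ge\delta+1+\Theta(\gamma,c_R)$.
\item The endpoint loop $2z_s=N-\delta$ is excluded by arithmetic, not sector filling. Here $c_R+1=\gcd(k,a+1)$ divides $k$ yet exceeds $k/2$, forcing $a\equiv-1$.
\end{itemize}

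You correctly anticipate that a cap of multiplicity two needs a sharper count. The paper counts both columns $C+tR$ and $2C+tR$, and gains one more in the alignment $N=4\alpha-2$. As written, though, your proposal lists the right tools without supplying the mechanism that makes them prove the reflection estimate.
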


Throughout this section put
\[
        b=a+1,
        \qquad
        N=k-1.
\]
Since $a-b=-1$, the arc-forcing subgroup is all of $\Z_k$.

\subsection{Cut permutations and the Hamiltonian cut set}

When defining cut candidates, we identify $\Z_k$ with the ordered set of representatives $\{0,1,\ldots,k-1\}$.  For $0\le d<k$, define the standard cut candidate $P_d$ by prescribing the outgoing arc at every vertex except $d$:
\[
P_d(x)=
\begin{cases}
        x+b, & 0\le x<d,\\
        \text{undefined}, & x=d,\\
        x+a, & d<x\le k-1.
\end{cases}
\]
Thus $P_d$ uses $d$ arcs labelled by $b$.

Adjoin the formal closing successor $d\mapsto a$, and denote the resulting permutation of $\Z_k$ by
\[
\Phi_d(i)=
\begin{cases}
        i+a+1, & 0\le i<d,\\
        a, & i=d,\\
        i+a, & d<i\le k-1.
\end{cases}
\]
This formal closing successor turns the cut candidate $P_d$ into the permutation $\Phi_d$.  If $\Phi_d$ is a $k$-cycle, deleting the formal closing successor $d\mapsto a$ leaves a directed Hamiltonian path from $a$ to $d$ in $D(k,a)$.  Conversely, if $P_d$ is a Hamiltonian path, then adjoining the formal closing successor gives a $k$-cycle.  Hence
\[
        P_d\text{ is Hamiltonian}
        \quad\Longleftrightarrow\quad
        \Phi_d\text{ is a }k\text{-cycle}.
\]

Define the Hamiltonian cut set
\[
        Z=\{d:\Phi_d\text{ is a }k\text{-cycle}\}.
\]
Thus every $d\in Z$ is realized by the Hamiltonian cut path obtained from $P_d$, and $\delta_b(P_d)=d$.  By the lattice parametrization, Theorem~\ref{input:lattice},
\[
        Z=\{U_1,\ldots,U_{f-1}\}.
\]

We shall use the lattice notation of Theorem~\ref{input:lattice}.  Thus $A_1,\ldots,A_f$ are the primitive ray generators in $T$ ordered by slope, with multiplicities
\[
        H_r=\left\lfloor\frac{N}{L(A_r)}\right\rfloor,
\]
and
\[
        Z=\{U_1,\ldots,U_{f-1}\},
        \qquad
        U_r=H_1+2\sum_{q=2}^{r}H_q.
\]
When $Z$ is written in increasing order as
\[
        Z=\{z_0<z_1<\cdots<z_s\},
\]
we write
\[
        z_{r+1}-z_r=2\lambda_r
        \qquad(0\le r<s),
\]
and we set
\[
        c_L=z_0,
        \qquad
        c_R=N-z_s.
\]

\begin{lemma}[DMM-to-gap dictionary]\label{lem:gap-dictionary}
In the notation above, $s=f-2$ and
\[
        z_r=U_{r+1}\quad(0\le r\le s),
        \qquad
        c_L=H_1,
        \qquad
        \lambda_r=H_{r+2}\quad(0\le r<s),
        \qquad
        c_R=H_f.
\]
Thus each internal gap $[z_r,z_{r+1}]$ corresponds to the primitive ray generator $A_{r+2}$, with the same multiplicity, and the order of internal gaps is the slope order of the corresponding internal primitive rays.  In particular all cut values have the same parity, namely the parity of $c_L$.
\end{lemma}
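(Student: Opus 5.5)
The plan is to read everything off Theorem~\ref{input:lattice}. That theorem gives $Z=\{U_1,\ldots,U_{f-1}\}$ with
\[
        U_r=H_1+2\sum_{q=2}^{r}H_q .
\]
The first step is to see that this list is already in increasing order, with no repetitions. For $2\le r\le f-1$, the ray $A_r$ is a non-boundary ray. By construction it is included only when some positive multiple of $A_r$ satisfies $L\le N$. Since $L$ is linear and positive on the nonzero points of $T$ (as $m>0$ and $n-e>0$), this forces $L(A_r)\le N$, and hence $H_r\ge1$. Consequently
\[
        U_r-U_{r-1}=2H_r\ge2>0 ,
\]
so $U_1<U_2<\cdots<U_{f-1}$ strictly.

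Matching this with the increasing enumeration $z_0<\cdots<z_s$ of $Z$ gives $s+1=f-1$, that is, $s=f-2$, and $z_r=U_{r+1}$ for $0\le r\le s$. The remaining identities then follow directly:
\begin{itemize}
\item $c_L=z_0=U_1=H_1$;
\item $2\lambda_r=z_{r+1}-z_r=U_{r+2}-U_{r+1}=2H_{r+2}$, so $\lambda_r=H_{r+2}$ for $0\le r<s$;
\item $c_R=N-z_s=N-U_{f-1}=H_f$, using the identity $U_{f-1}+H_f=N$ from Theorem~\ref{input:lattice}.
\end{itemize}
Since the index shift $r\mapsto r+2$ is monotone, the internal gaps $[z_r,z_{r+1}]$ correspond bijectively to the internal rays $A_2,\ldots,A_{f-1}$, and they appear in slope order. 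The parity claim is also immediate: every $U_r$ differs from $H_1=c_L$ by an even number.

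The one delicate point is strict positivity of the internal multiplicities $H_r$. The boundary multiplicities $H_1$ and $H_f$ may vanish, but this does no harm: only $H_1$ enters as an offset, and $H_f$ enters only through $c_R$. For the internal rays, I would state explicitly that $L(R)>0$ for every nonzero lattice point $R$ in the cone spanned by $T$. This holds because $L$ is positive at $(n,0)$ and at $(e,m)$, since $n-e>0$ follows from $e<m$ only after checking $e<n$. So I would instead verify positivity directly: $L(x,y)=mx+(n-e)y$ with $x,y\ge0$, and use the relation of $e$ to $n$ coming from $n(a+1)\equiv ea$. If that check is awkward, a fallback is available. Any $U_r$ repeated with $H_r=0$ would only duplicate an element of $Z$, so I can discard zero internal multiplicities. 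The inclusion criterion in Theorem~\ref{input:lattice} already excludes them, so I expect this step to be routine.
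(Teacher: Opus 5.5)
Your proof follows the paper's own proof. You read everything off Theorem~\ref{input:lattice}, match $z_r=U_{r+1}$, and then compute $c_L$, $\lambda_r$, $c_R$ and the parity exactly as the paper does. You also check that the internal multiplicities $H_2,\ldots,H_{f-1}$ are positive, so that $U_1<\cdots<U_{f-1}$ strictly. That check is what makes $s=f-2$ and the indexing $\lambda_r=H_{r+2}$ correct, and the paper passes over it with the phrase ``in increasing slope order.'' Keep it.

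Your justification that $L$ is positive is wrong as written, however. The claim $n-e>0$ is false in general. For $k=15$, $a=3$ one has $m=5$ and $n=3$, and $3e\equiv12\pmod{15}$ gives $e=4$. So $L(x,y)=5x-y$, which is negative at $(0,1)$. Deriving $e<n$ from $n(a+1)\equiv ea$, or proving $L>0$ on all $x,y\ge0$, therefore cannot succeed.

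The correct argument is the one you started to write. Compute $L(n,0)=mn$ and $L(e,m)=me+(n-e)m=mn$, and note $L(0,0)=0$. By linearity, $L$ is positive on the cone spanned by $(n,0)$ and $(e,m)$, away from the origin. Every ray $A_r$ in the list lies in that cone. Hence if $tL(A_r)\le N$ for some $t\ge1$, then $0<L(A_r)\le N$, so $H_r\ge1$.

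Also drop the fallback of discarding zero internal multiplicities. Doing so would change $s$ and shift the indices, so it would not prove the lemma in its stated form.
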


\begin{proof}
By Theorem~\ref{input:lattice}, the cut values are $U_1,\ldots,U_{f-1}$ in increasing slope order, so $s=f-2$ and $z_r=U_{r+1}$.  Hence $c_L=z_0=U_1=H_1$.  For $0\le r<s$,
\[
        z_{r+1}-z_r
        =U_{r+2}-U_{r+1}
        =2H_{r+2},
\]
so $\lambda_r=H_{r+2}$.  Finally, Theorem~\ref{input:lattice} gives $U_{f-1}+H_f=N$, hence $c_R=N-z_s=H_f$.  Since the ray list is ordered by slope, the same order is inherited by the internal gaps.
\end{proof}

For reference, we summarize the notation used in the cut-reflection proof.
\[
\begin{array}{c|c|c}
\text{object} & \text{notation} & \text{role in Section~\ref{sec:first-family}}\\
\hline
\text{primitive ray} & A_r & \text{slope-ordered ray in }T\\
\text{multiplicity} & H_r=\lfloor N/L(A_r)\rfloor & \text{allowed multiples on }A_r\\
\text{cut value} & U_r & \text{Hamiltonian }b\text{-arc count}\\
\text{cut set} & Z=\{z_0<\cdots<z_s\} & \text{ordered Hamiltonian cut values}\\
\text{internal gap} & z_{r+1}-z_r=2\lambda_r & \lambda_r=H_{r+2}\\
\text{endpoint caps} & c_L=z_0,\ c_R=N-z_s & \text{boundary multiplicities}
\end{array}
\]

\begin{lemma}[Boundary multiplicities]\label{lem:endpoint-caps}
Let
\[
        c_L=z_0,
        \qquad
        c_R=N-z_s.
\]
Then
\[
        c_L=\gcd(k,a)-1,
        \qquad
        c_R=\gcd(k,a+1)-1.
\]
\end{lemma}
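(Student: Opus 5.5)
The plan is to read both caps off the boundary rays of the triangle $T$ in Theorem~\ref{input:lattice}, using Lemma~\ref{lem:gap-dictionary}. That lemma gives $c_L=H_1$ and $c_R=H_f$, where $A_1$ and $A_f$ are the two boundary primitive directions of $T$ at the origin. The two edges of $T$ through $(0,0)$ point toward $(n,0)$ and toward $(e,m)$. So $A_1=(1,0)$, and $A_f=(e,m)/g$ with $g=\gcd(e,m)$. When $e=0$ this reads $g=m$ and $A_f=(0,1)$, which agrees with Example~\ref{ex:lattice-10-4}.

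\emph{Left cap.} We have $L(1,0)=m$, so
\[
H_1=\left\lfloor \frac{mn-1}{m}\right\rfloor=n-1.
\]
Since $n=|\Z_k:\langle a\rangle|=k/\ord(a)=\gcd(k,a)$, this gives $c_L=\gcd(k,a)-1$.

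\emph{Right cap.} We have $L(e,m)=me+(n-e)m=mn=k$, hence $L(A_f)=k/g$ and
\[
H_f=\left\lfloor \frac{(k-1)g}{k}\right\rfloor=g-1,
\]
because $0<g\le k$. This floor may be $0$, which is consistent with the remark that boundary multiplicities can vanish. It therefore remains to show $g=\gcd(k,a+1)$, equivalently that the index of $\langle b\rangle$ in $\Z_k$ equals $\gcd(e,m)$.

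\emph{The index of $\langle b\rangle$.} Consider the relation lattice
\[
\Lambda=\{(x,y)\in\Z^2: xb+ya\equiv0\pmod k\}.
\]
Since $\langle a,b\rangle=\Z_k$, the lattice $\Lambda$ has index $k$ in $\Z^2$. It contains $(0,m)$, because $ma=0$, and $(n,-e)$, because $nb=ea$. These two vectors span a sublattice of determinant $mn=k$, so they form a basis of $\Lambda$.

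Now $\ord(b)$ is the least $x>0$ with $(x,0)\in\Lambda$. Writing $(x,0)=\alpha(n,-e)+\beta(0,m)$ forces $x=\alpha n$ and $\alpha e\equiv0\pmod m$. The least admissible $\alpha>0$ is $m/\gcd(e,m)$, so $\ord(b)=mn/g$. Hence $\gcd(k,b)=k/\ord(b)=g$, which proves $c_R=\gcd(k,a+1)-1$.

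The main point needing care is this last step: checking that $(0,m)$ and $(n,-e)$ generate all of $\Lambda$, which is the determinant comparison. One should also confirm that the ray list's boundary direction really is the primitive vector on the edge from $(0,0)$ to $(e,m)$, rather than the edge from $(n,0)$ to $(e,m)$. Theorem~\ref{input:lattice} fixes this, since the rays emanate from the origin and $L$ is evaluated there.
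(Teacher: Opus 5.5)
Your proof is correct. The left cap, and the evaluation $H_f=\gcd(e,m)-1$ via $L(e,m)=mn$, are exactly as in the paper. The only difference is in the identification $\gcd(e,m)=\gcd(k,a+1)$.

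The paper does this by a direct order computation. Since $n(a+1)=ea$, the order of $ea$ in the cyclic group $\langle a\rangle$ of order $m$, namely $m/\gcd(e,m)$, equals the order of $n(a+1)$ in $\Z_k$. That order is $m/\gcd(k,a+1)$, because $n\mid a$ forces $\gcd(n,a+1)=1$.

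You instead determine the whole relation lattice $\Lambda$. An index comparison shows that $(0,m)$ and $(n,-e)$ form a basis, and you then read off $\ord(a+1)=mn/\gcd(e,m)$ from the points of $\Lambda$ on the first coordinate axis.

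The paper's route is shorter and purely arithmetic. Yours uses slightly more machinery but yields more: an explicit basis for the relations between $a$ and $a+1$. It also makes visible the symmetry that $n$ is the index of $\langle a\rangle$ while $\gcd(e,m)$ is the index of $\langle a+1\rangle$, which fits naturally with the lattice picture of $T$.
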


\begin{proof}
By Theorem~\ref{input:lattice}, $z_0=U_1=H_1$.  The first primitive ray is $(1,0)$, so $L(1,0)=m$.  Hence
\[
        H_1=\left\lfloor\frac{mn-1}{m}\right\rfloor=n-1.
\]
Since $n=|\Z_k:\langle a\rangle|=\gcd(k,a)$, this gives $c_L=\gcd(k,a)-1$.

For the right cap, Theorem~\ref{input:lattice} gives $c_R=N-U_{f-1}=H_f$.  The last boundary point is $(e,m)$.  Let $d_0=\gcd(e,m)$; the corresponding primitive ray is $(e/d_0,m/d_0)$.  Therefore
\[
        H_f=\left\lfloor \frac{mn-1}{mn/d_0}\right\rfloor=d_0-1.
\]
We claim that $d_0=\gcd(k,a+1)$.  Since $n(a+1)\equiv ea\pmod{k}$, the orders of $n(a+1)$ and $ea$ in $\Z_k$ are equal.  The order of $ea$ in the cyclic subgroup $\langle a\rangle$ is $m/d_0$.  Let $h=\gcd(k,a+1)$.  Since $n=\gcd(k,a)$ divides $a$, we have $\gcd(n,a+1)=1$.  Hence $h=\gcd(m,a+1)$ and
\[
        \gcd(k,n(a+1))=n\gcd(m,a+1)=nh.
\]
Thus the order of $n(a+1)$ in $\Z_k$ is $mn/(nh)=m/h$.  Hence $d_0=h$, and so
\[
        c_R=H_f=h-1=\gcd(k,a+1)-1.
\]
\end{proof}

We shall regard the two endpoint caps and the internal gaps as blocks.  Let
\[
        \mathcal B_L,
        G_0,G_1,\ldots,G_{s-1},
        \mathcal B_R
\]
be the ordered block sequence, where $G_i$ is the internal gap $[z_i,z_{i+1}]$.  Define the height of a block by
\[
        h(\mathcal B_L)=c_L,
        \qquad
        h(G_i)=\lambda_i,
        \qquad
        h(\mathcal B_R)=c_R.
\]
For two blocks $P<Q$ in this order, with positive heights when sector filling is applied, let
\[
        M(P,Q)=\sum_{P<B<Q}h(B)
\]
be the multiplicity mass strictly between them.  Thus, for instance,
\[
        M(\mathcal B_L,G_i)=\sum_{r=0}^{i-1}\lambda_r=\frac{z_i-c_L}{2},
        \qquad
        M(G_t,G_j)=\sum_{r=t+1}^{j-1}\lambda_r.
\]
By Lemma~\ref{lem:gap-dictionary}, this is the same multiplicity mass as the corresponding sum over primitive ray generators in slope order.  The notation below keeps the block, such as $G_t$, distinct from its height $h(G_t)=\lambda_t$.

Thus the cut set is governed by a block sequence rather than by unrelated cut values: the endpoint heights are the gcd caps of Lemma~\ref{lem:endpoint-caps}, the internal heights are the DMM primitive-ray multiplicities, and $Z$ is the corresponding prefix-sum set.  The sector estimates below express the resulting separation constraint for large blocks: two large heights force substantial multiplicity mass between them.

\subsection{Sector filling}\label{sec:first-family-sector}

Let $R,S$ be two primitive ray generators of the triangle $T$, with $R<S$ in slope order.  Write
\[
        p=H(R),
        \qquad
        q=H(S).
\]
Let
\[
        M(R,S)=\sum_{R<C<S}H(C),
\]
where the sum runs over primitive rays strictly between $R$ and $S$.  We call such sums total multiplicities between the indicated rays.

Figure~\ref{fig:sector-filling} illustrates the injection used in the next lemma.

\begin{figure}[t]
\centering
\begin{tikzpicture}[scale=0.86, >=Stealth]
\coordinate (O) at (0,0);
\coordinate (X) at (6.4,0);
\coordinate (Y) at (2.0,4.25);
\draw[thick] (O)--(X)--(Y)--cycle;
\node[below left] at (O) {$0$};
\node[below] at (X) {$(n,0)$};
\node[above] at (Y) {$(e,m)$};
\coordinate (P) at (5.0,1.0);
\coordinate (Q) at (2.65,2.95);
\fill[gray!18] (O)--(P)--(3.65,2.05)--(Q)--cycle;
\draw[thick, blue] (O)--(P) node[pos=.90, below right] {$R$};
\draw[thick, red] (O)--(Q) node[pos=.88, above left] {$S$};
\draw[dashed] (X)--(Y) node[pos=.62, above right] {$L=N+1$};
\foreach \x/\y in {3.7/1.45,3.25/1.85,2.92/2.15,4.25/1.20,3.58/1.70,2.72/2.50}{
  \fill[black] (\x,\y) circle (1.35pt);
}
\node[draw, rounded corners, fill=white, align=center, font=\scriptsize, inner sep=2pt] at (4.65,3.35) {points $rR+sS$\\with $r/p+s/q\le1$};
\draw[->, thick] (4.10,3.05)--(3.55,1.72);
\end{tikzpicture}
\caption{The sector-filling injection.  Points $rR+sS$ with $r/p+s/q\le1$ lie in the open sector between $R$ and $S$ and below the boundary of $T$.}
\label{fig:sector-filling}
\end{figure}

\begin{lemma}[Sector filling]\label{lem:sector-filling}
With the notation above,
\[
        M(R,S)
        \ge
        \Theta(p,q),
\]
where
\[
        \Theta(p,q)=
        \#\left\{(r,s)\in\Z_{>0}^2:
        \frac rp+\frac sq\le1\right\}.
\]
\end{lemma}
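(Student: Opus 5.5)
We construct an injection from the index set of $\Theta(p,q)$ into the set of counted lattice points strictly inside the sector between $R$ and $S$. By the description following Theorem~\ref{input:lattice}, $M(R,S)$ is exactly the number of points of the form $jC$, with $C$ a primitive ray generator in the list strictly between $R$ and $S$ and $1\le j\le H(C)$. Equivalently, it counts the nonzero lattice points $X$ in the open sector spanned by $R$ and $S$ with $L(X)\le N$: each such point is uniquely $jC$ with $C$ primitive and $j\ge1$, and $L(jC)\le N$ forces $j\le H(C)$. Such a $C$ is automatically in the list, since it has a positive multiple with $L\le N$. We may assume $p,q\ge1$, since otherwise $\Theta(p,q)=0$ and there is nothing to prove.

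Given $(r,s)\in\Z_{>0}^2$ with $r/p+s/q\le 1$, we send it to $X=rR+sS$. This point is a lattice point. Because $r,s>0$ and $R,S$ are linearly independent (distinct primitive directions in the triangle, all lying in the closed first-quadrant cone spanned by the two boundary directions), $X$ lies in the open sector between $R$ and $S$. The map is injective by linear independence.

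The main estimate is $L(X)\le N$. By linearity,
\[
L(X)=rL(R)+sL(S)=\frac{r}{p}\,pL(R)+\frac{s}{q}\,qL(S).
\]
Since $p=\lfloor N/L(R)\rfloor$, we have $pL(R)\le N$, and similarly $qL(S)\le N$. Here $L(R),L(S)>0$: $L$ is positive on the triangle's cone minus the origin, because $m>0$ and $n-e>0$, and the cone lies in $x,y\ge0$ up to the direction $(e,m)$ where $L=me+(n-e)m=mn>0$. Hence
\[
L(X)\le\Bigl(\frac rp+\frac sq\Bigr)N\le N.
\]
So $X$ is counted in $M(R,S)$. Its primitive ray lies strictly between $R$ and $S$ in slope order, and this ray belongs to the ray list because it has a multiple, namely $X$, with $L\le N$.

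The only step needing care is the identification of $M(R,S)$ with the lattice-point count in the open sector. This requires that the slope order on the list agrees with the angular order within the cone. It also requires that every primitive ray strictly between $R$ and $S$ lies strictly inside the cone of $T$ at the origin, so that it is a non-boundary ray and is included whenever some multiple has $L\le N$. Both hold because $R$ and $S$ are themselves in that cone and the open sector between them is contained in it. Combining these facts gives $M(R,S)\ge\Theta(p,q)$.
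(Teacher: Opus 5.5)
Your proposal is correct and is essentially the paper's proof: the same injection $(r,s)\mapsto rR+sS$ into the counted multiples on the intervening rays, the same estimate $L(rR+sS)\le (r/p+s/q)N\le N$, and injectivity from the linear independence of $R$ and $S$. One small slip: $n-e>0$ does not hold in general (for $k=15$, $a=3$ one gets $n=3$, $e=4$), but positivity of $L$ on the cone still follows by linearity from $L(1,0)=m>0$ and $L(e,m)=mn>0$, so the argument is unaffected.
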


\begin{proof}
Since $H(R)=p$ and $H(S)=q$, we have
\[
        L(R)\le \frac Np,
        \qquad
        L(S)\le \frac Nq.
\]
For $r,s>0$, the lattice point
\[
        V_{r,s}=rR+sS
\]
lies in the open sector between $R$ and $S$.  If $r/p+s/q\le1$, then
\[
        L(V_{r,s})=rL(R)+sL(S)
        \le
        N\left(\frac rp+\frac sq\right)
        \le N.
\]
Since the opposite side of $T$ is the line $L=mn=N+1$, the inequality $L(V_{r,s})\le N$ places $V_{r,s}$ strictly inside $T$.  Write $V_{r,s}=tC$, where $C$ is primitive.  Then $C$ lies between $R$ and $S$, and $tL(C)\le N$, so this point is counted among the $H(C)$ allowed multiples on the ray $C$.

The representation $rR+sS$ is unique, because $R$ and $S$ are linearly independent.  Thus distinct pairs $(r,s)$ give distinct lattice points.  If several pairs map to multiples of the same primitive ray $C$, their images are distinct multiples $tC$; since $H(C)$ counts these allowed multiples with multiplicity, injectivity is preserved at the multiset level.  Hence the admissible pairs inject into the multiset counted by $M(R,S)$.
\end{proof}

\begin{lemma}\label{lem:theta-bound}
For positive integers $p,q$,
\[
        \Theta(p,q)\ge \frac{(p-1)(q-1)}2.
\]
\end{lemma}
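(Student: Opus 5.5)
We will prove the bound by comparing $\Theta(p,q)$ with the lattice points of a triangle, which we count exactly via a symmetry trick. Because $\Theta(p,q)$ is monotone in the constraint, it suffices to exhibit enough pairs $(r,s)$ in the region
\[
        r,s\ge1,\qquad \frac rp+\frac sq\le1 .
\]
The relevant pairs are those in the rectangle $\{1,\ldots,p-1\}\times\{1,\ldots,q-1\}$, which has $(p-1)(q-1)$ elements. If $p=1$ or $q=1$ the right-hand side is $0$ and there is nothing to prove, so we assume $p,q\ge2$.

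The key step is the involution
\[
        (r,s)\mapsto(p-r,q-s)
\]
on this rectangle. It maps the rectangle bijectively to itself, and it sends the quantity $\frac rp+\frac sq$ to $2-\left(\frac rp+\frac sq\right)$. Consequently, for every pair $\{(r,s),(p-r,q-s)\}$, at least one member satisfies $\frac rp+\frac sq\le1$. This holds also for a fixed point, where the value is exactly $1$.

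The rectangle therefore splits into orbits of size one or two, and each orbit contains at least one admissible pair. The number of orbits is at least half the size of the rectangle, so the number of admissible pairs is at least $(p-1)(q-1)/2$. This gives
\[
        \Theta(p,q)\ge\frac{(p-1)(q-1)}2 .
\]

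No step here is genuinely hard. The one point needing care is the orbit count, since fixed points of the involution and pairs lying exactly on the line $\frac rp+\frac sq=1$ must be handled correctly. A cleaner way to phrase it is that the map sends the set of pairs with $\frac rp+\frac sq>1$ injectively into the set of pairs with $\frac rp+\frac sq<1$. Hence the pairs with value at most $1$ make up at least half of the rectangle, which gives the bound directly.
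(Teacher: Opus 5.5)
Your proof is correct. It rests on the same symmetry as the paper's proof but is executed differently.

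The paper counts $\Theta(p,q)$ column by column as $\sum_{r=1}^{p-1}\lfloor qr/p\rfloor$. It then pairs columns $r$ and $p-r$ using $\lfloor qr/p\rfloor+\lfloor q(p-r)/p\rfloor\ge q-1$, and handles the unpaired middle column $\lfloor q/2\rfloor\ge (q-1)/2$ separately when $p$ is even.

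Your involution $(r,s)\mapsto(p-r,q-s)$ is the two-dimensional form of that column pairing, since it carries column $r$ to column $p-r$. It needs no floor functions and no parity split. It matches the rectangle points with $r/p+s/q>1$ bijectively with those with $r/p+s/q<1$, and your fixed-point remark plays the role of the paper's middle-column estimate.

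Your version also gives slightly more. No admissible pair has $r\ge p$ or $s\ge q$, since that would force the other coordinate to be $\le 0$. So your count is actually exact:
\[
\Theta(p,q)=\tfrac12\bigl((p-1)(q-1)+|E|\bigr),
\]
where $E$ is the set of rectangle points on the line $r/p+s/q=1$. One checks $|E|=\gcd(p,q)-1$, which recovers the classical identity for $\sum_{r=1}^{p-1}\lfloor qr/p\rfloor$ and shows the lemma is sharp exactly when $\gcd(p,q)=1$.

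The paper's route only yields the inequality. Its advantage is that it is a direct floor computation, matching the column-by-column counting style used in the later endpoint estimates.
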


\begin{proof}
We have
\[
\Theta(p,q)
=
\sum_{r=1}^{p-1}
\left\lfloor q\left(1-\frac rp\right)\right\rfloor
=
\sum_{r=1}^{p-1}
\left\lfloor \frac{qr}{p}\right\rfloor.
\]
The second equality is obtained by replacing the summation index $r$ by $p-r$.  Pairing the terms for $r$ and $p-r$ gives
\[
\left\lfloor\frac{qr}{p}\right\rfloor+
\left\lfloor\frac{q(p-r)}{p}\right\rfloor
\ge q-1.
\]
If $p$ is odd, the $p-1$ terms form $(p-1)/2$ such pairs, and the desired bound follows.  If $p$ is even, the $p-2$ terms away from $r=p/2$ form $(p-2)/2$ such pairs, and the unpaired middle term is
\[
        \left\lfloor\frac q2\right\rfloor\ge\frac{q-1}{2}.
\]
Hence
\[
        \Theta(p,q)
        \ge
        \frac{p-2}{2}(q-1)+\frac{q-1}{2}
        =
        \frac{(p-1)(q-1)}2.
\]
\end{proof}

\begin{lemma}\label{lem:theta-linear}
If $q\ge p\ge2$, then $\Theta(p,q)\ge p-1$.
\end{lemma}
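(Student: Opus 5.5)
We prove Lemma~\ref{lem:theta-linear} directly from the formula for $\Theta$ in Lemma~\ref{lem:theta-bound}, by exhibiting $p-1$ admissible pairs. The proof of Lemma~\ref{lem:theta-bound} gives
\[
        \Theta(p,q)=\sum_{r=1}^{p-1}\left\lfloor\frac{qr}{p}\right\rfloor .
\]
So it suffices to show that each of the $p-1$ summands is at least $1$.

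For $1\le r\le p-1$ and $q\ge p$, we have $qr/p\ge r\ge1$. Hence $\lfloor qr/p\rfloor\ge1$. Summing over $r$ gives $\Theta(p,q)\ge p-1$.

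Equivalently, in the defining set of $\Theta$, the pairs $(r,1)$ with $1\le r\le p-1$ are admissible. Indeed,
\[
        \frac rp+\frac1q\le\frac{p-1}{p}+\frac1p=1,
\]
where the inequality uses $q\ge p$. These $p-1$ pairs are distinct, which gives the bound. The hypothesis $p\ge2$ only ensures that the range $1\le r\le p-1$ is nonempty; for $p=1$ the bound would be $\Theta\ge0$, which is trivial.

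No step here is a real obstacle. The one point to check is that the hypothesis $q\ge p$ is exactly what makes $s=1$ an admissible value for every $r\le p-1$.
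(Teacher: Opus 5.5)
Your proof is correct. Your second argument, that the pairs $(r,1)$ with $1\le r\le p-1$ are admissible because $1/q\le 1/p$, is exactly the paper's proof; the floor-sum version is the same count organized by columns via the formula from Lemma~\ref{lem:theta-bound}.
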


\begin{proof}
For each $1\le r\le p-1$, the pair $(r,1)$ is admissible, because
\[
        \frac rp+\frac1q
        \le
        \frac{p-1}{p}+\frac1p
        =1.
\]
Thus $\Theta(p,q)\ge p-1$.
\end{proof}

\begin{lemma}[Comparison estimate]\label{lem:absorption}
Let $\delta\ge2$.  If
\[
        p>\delta,
        \qquad
        q\ge q_0>\delta,
\]
then
\[
        \Theta(p,q)>q_0-\delta.
\]
\end{lemma}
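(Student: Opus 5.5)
The plan is to derive the comparison estimate directly from the quadratic lower bound of Lemma~\ref{lem:theta-bound}, without any further lattice-point counting. The point is that $\Theta(p,q)$ grows roughly like $pq/2$, while the target $q_0-\delta$ is only linear in $q_0$. So once $p$ exceeds $\delta\ge2$, the product bound should already beat $q_0$ with room to spare.

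\textbf{Step 1: the lower bound in terms of $\delta$ and $q_0$.} Lemma~\ref{lem:theta-bound} gives
\[
        \Theta(p,q)\ge\frac{(p-1)(q-1)}{2}.
\]
The hypotheses are integral, so $p>\delta$ means $p-1\ge\delta$, and $q\ge q_0$ means $q-1\ge q_0-1$. Also $q_0-1\ge\delta\ge2>0$, so both factors are positive. Multiplying the two inequalities therefore gives
\[
        \Theta(p,q)\ge\frac{\delta(q_0-1)}{2}.
\]

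\textbf{Step 2: compare with $q_0-\delta$.} Since $\delta\ge2$, we have $\delta/2\ge1$, and hence
\[
        \frac{\delta(q_0-1)}{2}\ge q_0-1.
\]
Finally, $q_0-1>q_0-\delta$ because $\delta\ge2>1$. Chaining the inequalities yields
\[
        \Theta(p,q)\ge q_0-1>q_0-\delta,
\]
which is the claim.

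\textbf{Expected difficulty.} There is no real obstacle here. The only care needed is to check that the integrality steps ($p-1\ge\delta$ and $q-1\ge q_0-1$) are legitimate and that both factors are positive before multiplying, which holds because $q_0>\delta\ge2$. Alternatively, one could note that $\Theta$ is monotone in each argument and reduce to $\Theta(\delta+1,q_0)$. However, the bound from Lemma~\ref{lem:theta-bound} already applies uniformly in $p$ and $q$, so no monotonicity argument is needed.
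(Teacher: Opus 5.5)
Your proof is correct and follows the paper's own argument: both apply Lemma~\ref{lem:theta-bound} to get $\Theta(p,q)\ge\delta(q_0-1)/2$ and then compare this with $q_0-\delta$. The only difference is in the final check, where you use the chain $\delta(q_0-1)/2\ge q_0-1>q_0-\delta$ and the paper rearranges the inequality to $q_0(\delta-2)+\delta>0$; this difference is cosmetic.
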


\begin{proof}
By Lemma~\ref{lem:theta-bound},
\[
        \Theta(p,q)\ge\frac{(p-1)(q-1)}2
        \ge
        \frac{\delta(q_0-1)}2.
\]
It remains only to check that
\[
        \frac{\delta(q_0-1)}2>q_0-\delta
\]
is equivalent to
\[
        q_0(\delta-2)+\delta>0.
\]
\end{proof}

\begin{lemma}[Endpoint sector filling]\label{lem:endpoint-sector}
Let $E$ be an endpoint primitive ray with endpoint multiplicity $c=H(E)>0$.  If $E$ is the left endpoint ray, let $R$ be an internal primitive ray to its right in the slope order; if $E$ is the right endpoint ray, let $R$ be an internal primitive ray to its left.  Put $h=H(R)$, and let $M(E,R)$ denote the total multiplicity of the primitive rays strictly between $E$ and $R$.  Then
\[
        M(E,R)\ge \Theta(c,h).
\]
The same assertion holds at the right endpoint after reversing the order of the primitive rays.
\end{lemma}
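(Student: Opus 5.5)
The plan is to reduce the statement to Lemma~\ref{lem:sector-filling}. The endpoint ray $E$ is itself a primitive ray generator in the ordered list $A_1,\ldots,A_f$. In the left case $E=A_1=(1,0)$, and in the right case $E=A_f$, the primitive direction of $(e,m)$. The proof of Lemma~\ref{lem:sector-filling} never used that $R$ and $S$ are internal. It used only three facts: $H(R)=p$ and $H(S)=q$ give $L(R)\le N/p$ and $L(S)\le N/q$; the two rays are linearly independent; and any lattice point $rR+sS$ with $r,s>0$ and $L\le N$ is a positive multiple $tC$ of a primitive ray $C$ strictly between them in slope order that appears in the ray list. So I would take $(R,S)=(E,R)$ in the left case and $(R,S)=(R,E)$ in the right case. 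I would then check that these three facts still hold.

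First, $c=H(E)>0$ gives $L(E)\le N/c$, and $H(R)=h$ gives $L(R)\le N/h$. For $r,s>0$ with $r/c+s/h\le1$, linearity gives $L(rE+sR)\le N$. The vectors $E$ and $R$ are independent, since $R$ is internal and so has slope strictly different from that of the boundary ray $E$. Hence distinct admissible pairs give distinct lattice points.

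Second, I must check that $V=rE+sR$ lies in $T$ and that its primitive direction $C$ is counted in $M(E,R)$. The point $V$ is a positive combination of $E$ and $R$, and both lie in the closed cone of $T$ at the origin. So $V$ lies in the open sector strictly between their slopes, hence in the interior of the cone. Since $L(V)\le N<N+1$, the point $V$ is on the origin side of the opposite edge, so it lies in $T$ and off the boundary edges through the origin. Its primitive direction $C$ is therefore a non-boundary primitive ray. Some positive multiple of $C$, namely $V$, has $L\le N$, so $C$ belongs to the ray list by its definition in Theorem~\ref{input:lattice}. Moreover $C$ lies strictly between $E$ and $R$ in slope order, and $V=tC$ with $tL(C)\le N$, so $t\le H(C)$.

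Third, the injectivity at the multiset level is exactly as in Lemma~\ref{lem:sector-filling}, which gives $M(E,R)\ge\Theta(c,h)$. The right endpoint case is symmetric, with the roles of the two rays exchanged in the slope order; $\Theta$ is symmetric in its arguments, so the bound is unchanged.

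The only real obstacle is the membership claim in the second step. I must confirm that the points produced in the sector adjacent to a boundary ray are not themselves on a boundary ray, and that their primitive directions are genuinely in the DMM ray list rather than omitted. The open-sector argument settles this, because strict positivity of $r$ and $s$ keeps the slope strictly between those of $E$ and $R$.
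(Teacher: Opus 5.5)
Your proposal is correct and follows the paper's proof: both rerun the injection of Lemma~\ref{lem:sector-filling} with the endpoint ray $E$ as one of the two rays, using $L(rE+sR)\le N<mn$ to place each point $rE+sR$ inside $T$ in the open sector between $E$ and $R$, and counting distinct points on a common primitive ray as distinct multiples. Your checks of linear independence and of membership of the intermediate primitive directions in the ray list fill in steps that the paper leaves implicit.
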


\begin{proof}
The proof is the sector-filling injection of Lemma~\ref{lem:sector-filling} with one of the two rays placed on the boundary of the primitive-ray order.  For every pair of positive integers $(r,s)$ with
\[
        \frac r c+\frac s h\le1,
\]
the lattice point $rE+sR$ lies in the open sector between $E$ and $R$, and
\[
        L(rE+sR)\le r\frac Nc+s\frac Nh\le N<mn.
\]
Hence the primitive ray through $rE+sR$ is an admissible ray of the triangle between $E$ and $R$.  If several such points lie on the same primitive ray, they contribute as distinct allowed multiples, as in Lemma~\ref{lem:sector-filling}.  This gives an injection into the multiset counted by $M(E,R)$, and the reflected endpoint is identical after reversing the order.
\end{proof}

\begin{corollary}[No adjacent large multiplicities]\label{cor:no-adjacent-large}
Adjacent primitive rays cannot both have multiplicity at least $2$.  The same statement holds for an endpoint cap and the adjacent internal multiplicity.
\end{corollary}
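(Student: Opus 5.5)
The plan is to read the corollary off the sector-filling lemmas with nothing between the two rays. Let $R<S$ be adjacent primitive rays in the slope-ordered list of Theorem~\ref{input:lattice}. Adjacency means no primitive ray lies strictly between them, so the intervening mass is empty:
\[
        M(R,S)=0.
\]
Suppose, for contradiction, that $p=H(R)\ge2$ and $q=H(S)\ge2$. By Lemma~\ref{lem:sector-filling},
\[
        0=M(R,S)\ge\Theta(p,q).
\]
So it suffices to show $\Theta(p,q)\ge1$ whenever $p,q\ge2$. The pair $(r,s)=(1,1)$ works: since $1/p+1/q\le 1/2+1/2=1$, it is admissible. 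Equivalently, if $q\ge p\ge 2$, Lemma~\ref{lem:theta-linear} gives $\Theta(p,q)\ge p-1\ge1$, and the case $p>q$ follows by symmetry of $\Theta$. Either way we get $0\ge1$, a contradiction.

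For the endpoint statement, let $E$ be the left or right endpoint ray with cap $c=H(E)$, and let $R$ be the adjacent internal ray with $h=H(R)$. Assume $c,h\ge2$. In particular $c>0$, so Lemma~\ref{lem:endpoint-sector} applies and gives
\[
        0=M(E,R)\ge\Theta(c,h)\ge1,
\]
using the same pair $(1,1)$. This is again a contradiction.

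The only point needing care is matching the two notions of adjacency. I will state that adjacency is meant in the full primitive-ray list $A_1,\dots,A_f$. By Lemma~\ref{lem:gap-dictionary}, this is the same as adjacency of blocks $\mathcal B_L,G_0,\dots,G_{s-1},\mathcal B_R$ with heights $c_L,\lambda_r,c_R$. This identification is what makes $M=0$ for adjacent blocks, including blocks of height zero, since such rays still belong to the list. It is worth noting explicitly that the injection in Lemma~\ref{lem:sector-filling} lands on rays strictly between $R$ and $S$: the point $R+S$ lies in the open sector and satisfies $L(R+S)\le N/p+N/q\le N$. Hence its primitive ray would be a listed ray strictly between $R$ and $S$, contradicting adjacency directly.
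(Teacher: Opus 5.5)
Your proposal is correct and matches the paper's proof. Adjacency forces zero intervening mass, while Lemma~\ref{lem:sector-filling} with $\Theta(p,q)\ge1$ (from Lemma~\ref{lem:theta-linear}, or directly from the admissible pair $(1,1)$) forces at least one, and the endpoint case is the same argument using Lemma~\ref{lem:endpoint-sector}; your use of Lemma~\ref{lem:gap-dictionary} to match block adjacency with ray adjacency only makes explicit what the paper leaves implicit.
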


\begin{proof}
If two adjacent blocks have multiplicities $p,q\ge2$, then the intervening total multiplicity is $0$ but Lemmas~\ref{lem:sector-filling} and \ref{lem:theta-linear} give it at least $\Theta(p,q)\ge1$.  The endpoint case follows from Lemma~\ref{lem:endpoint-sector}.
\end{proof}

\subsection{The reflected gap graph}

The rest of the proof of the cut-reflection theorem is by contradiction.  We assume that the least distance between $Z$ and its reflection is $\delta\ge2$, record the pairs realizing this distance in a graph $\Gamma_\delta$, and reduce the graph to a fully blocked edge or an isolated fully blocked loop.  The reduction leaves four canonical alternatives: an internal edge, an internal loop, a one-sided endpoint-blocked edge with internal reflected point, or an endpoint-core placement.  The common calculation is Lemma~\ref{lem:reflected-defect}: after passing to an adjacent cut value across an internal blocking gap, reflection gives an upper bound for a block mass, while sector filling gives a lower bound for the same mass.
Assume, for contradiction, that
\[
        \delta=\dist(Z,N-Z)\ge2.
\]
Then $|u+v-N|\ge\delta$ for all $u,v\in Z$, and equality occurs for at least one pair because $Z$ is finite and nonempty.

\begin{definition}[Reflected gap graph]
The graph $\Gamma_\delta$ has vertex set $Z$.  A negative edge joins $u,v\in Z$ if
\[
        u+v=N-\delta,
\]
and a positive edge joins $u,v\in Z$ if
\[
        u+v=N+\delta.
\]
Loops are allowed.

\end{definition}

The graph is nonempty: since $Z$ is finite and nonempty, some pair $u,v\in Z$ realizes $|u+v-N|=\delta$, and this pair is a negative or positive edge of $\Gamma_\delta$, with a loop allowed when $u=v$.  Crossing an internal gap of multiplicity $\delta$ changes the sign of an edge, as recorded in Lemma~\ref{lem:propagation} and illustrated in Figure~\ref{fig:gap-propagation}.

\begin{figure}[H]
\centering
\begin{tikzpicture}[x=1.0cm,y=0.65cm,>=Stealth]
\draw[->] (-0.3,0)--(7.3,0);
\node[zpoint,label=below:$z_i$] (zi) at (1,0) {};
\node[zpoint,label=below:$z_{i+1}$] (zip) at (3,0) {};
\node[zpoint,label=below:$z_j$] (zj) at (6,0) {};
\draw[blocker] (1,0.42)--(3,0.42) node[midway,above=3pt] {$2\delta$};
\draw[negedge] (zi) to node[above left=2pt] {$N-\delta$} (zj);
\draw[posedge] (zip) to node[above right=2pt] {$N+\delta$} (zj);
\end{tikzpicture}
\caption{Propagation across a $\delta$-gap: if $z_i+z_j=N-\delta$ and $z_{i+1}-z_i=2\delta$, then $z_{i+1}+z_j=N+\delta$.  When the equality $\lambda_i=\delta$ fails, the extension is blocked; these blocked sides are the cases considered in Lemma~\ref{lem:sealed-exhaustion}.}
\label{fig:gap-propagation}
\end{figure}

\begin{definition}[$\delta$-cores and blocking conditions]\label{def:blocked}
An internal gap $[z_t,z_{t+1}]$ has $\delta$-core, or $\delta$-interior,
\[
        [z_t+\delta,\ z_{t+1}-\delta],
\]
which is nonempty when $\lambda_t\ge\delta$.
The left endpoint cap has $\delta$-core $[0,c_L-\delta]$ if $c_L\ge\delta$, and the right endpoint cap has $\delta$-core $[N-c_R+\delta,N]$ if $c_R\ge\delta$.

For a negative edge $z_i z_j$, $z_i+z_j=N-\delta$, the possible propagation at $z_i$ is to the right.  It is internally blocked at $z_i$ if $i<s$ and $\lambda_i>\delta$; it is endpoint-blocked at $z_i$ if $i=s$.  We say that the edge is fully blocked if the right propagation is blocked at both endpoints, and internally blocked on both sides if both blocking gaps are internal gaps.

For a positive edge $z_i z_j$, $z_i+z_j=N+\delta$, the possible propagation at $z_i$ is to the left.  It is internally blocked at $z_i$ if $i>0$ and $\lambda_{i-1}>\delta$; it is endpoint-blocked at $z_i$ if $i=0$.  A positive edge is fully blocked, respectively internally blocked on both sides, by the analogous condition at both endpoints.
\end{definition}

\begin{lemma}[Location of reflected points]\label{lem:target-placement}
Assume $\delta=\dist(Z,N-Z)\ge2$.  Let $x\in Z$ and put $T=N-x$.  Then $T$ lies either in the $\delta$-core of an internal gap of $Z$, or in the $\delta$-core of one of the two endpoint caps.
\end{lemma}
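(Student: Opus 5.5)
The plan is a direct partition argument.  The interval $[0,N]$ is the union of the left cap $[0,z_0]$, the internal gaps $[z_t,z_{t+1}]$, and the right cap $[z_s,N]$.  The standing assumption $\delta=\dist(Z,N-Z)\ge2$ says exactly that $|T-z|\ge\delta$ for every $z\in Z$.  Indeed, $|T-z|=|N-x-z|=|(x+z)-N|$, which is at least $\delta$ because $x,z\in Z$.  We will simply locate $T$ in this partition and read off which $\delta$-core it lies in.

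First, $0\le x\le N$ because $Z\subseteq\{0,\dots,N\}$, so $0\le T\le N$.  Hence $T$ falls into one of the three kinds of piece, and we split into three cases.

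\emph{$T$ in an internal gap.}  Suppose $z_t\le T\le z_{t+1}$ for some $0\le t<s$.  Applying the distance bound to $z=z_t$ and $z=z_{t+1}$ gives $T\ge z_t+\delta$ and $T\le z_{t+1}-\delta$.  So $T$ lies in $[z_t+\delta,z_{t+1}-\delta]$, the $\delta$-core of the gap.  This also forces $2\lambda_t\ge2\delta$, so the core is nonempty.

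\emph{$T$ in the left cap.}  Suppose $T\le z_0$.  Using $z=z_0$ gives $T\le z_0-\delta=c_L-\delta$, and $T\ge0$ holds automatically.  Hence $c_L\ge\delta$ and $T\in[0,c_L-\delta]$, the left $\delta$-core.

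\emph{$T$ in the right cap.}  Suppose $T\ge z_s$.  Using $z=z_s$ gives $T\ge z_s+\delta=N-c_R+\delta$.  Together with $T\le N$, this yields $c_R\ge\delta$ and $T\in[N-c_R+\delta,N]$, the right $\delta$-core.

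A point lying on a boundary $z_t$ is excluded immediately, since it would give distance $0<\delta$.  So the case split is unambiguous.

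No step is a genuine obstacle; the argument is bookkeeping with the definitions of the cores in Definition~\ref{def:blocked}.  The only points needing care are two.  The first is the degenerate situation $s=0$, where there are no internal gaps and only the two cap cases occur.  The second is checking that the nonemptiness conditions ($\lambda_t\ge\delta$, $c_L\ge\delta$, $c_R\ge\delta$) under which the cores are defined are themselves consequences of the inequalities derived, rather than extra hypotheses.
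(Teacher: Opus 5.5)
Your proposal is correct and follows the paper's proof: both use $|T-z|=|x+z-N|\ge\delta$ for every $z\in Z$ and $0\le T\le N$, then split according to whether $T$ lies in an internal gap $[z_t,z_{t+1}]$, below $z_0=c_L$, or above $z_s=N-c_R$. Your additional remarks on the case $s=0$ and on the nonemptiness of the cores are harmless extra bookkeeping that the paper leaves implicit.
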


\begin{proof}
For every $z\in Z$ we have
\[
        |T-z|=|N-x-z|\ge\delta.
\]
Since $0\le T\le N$, there are three possibilities.  If $z_t\le T\le z_{t+1}$ for some internal gap, then
\[
        T-z_t\ge\delta,
        \qquad
        z_{t+1}-T\ge\delta,
\]
so $T\in[z_t+\delta,z_{t+1}-\delta]$.  If $T<z_0=c_L$, then $z_0-T\ge\delta$, so $T\le c_L-\delta$ and $T$ lies in the left endpoint core.  If $T>z_s=N-c_R$, then $T-z_s\ge\delta$, so $T\ge N-c_R+\delta$ and $T$ lies in the right endpoint core.
\end{proof}

\begin{lemma}[Reflection]\label{lem:reflection}
Let $\widetilde Z=N-Z$, written in increasing order.  Then
\[
        \dist(Z,N-Z)=\dist(\widetilde Z,N-\widetilde Z).
\]
The involution $z\mapsto N-z$ interchanges negative and positive edges, interchanges the left and right boundary multiplicities, and reverses the internal gap sequence.  More explicitly, if
\[
        \widetilde Z=N-Z=\{\widetilde z_0<\cdots<\widetilde z_s\},
\]
then
\[
        \widetilde z_r=N-z_{s-r},
        \qquad
        \widetilde\lambda_r=\lambda_{s-1-r},
        \qquad
        \widetilde c_L=c_R,
        \qquad
        \widetilde c_R=c_L.
\]
Consequently internal blocking, endpoint blocking, and the corresponding endpoint-core alternatives are preserved under reflection, with left and right interchanged.
\end{lemma}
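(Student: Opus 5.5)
This is a direct computation with the reflection map $\rho(z)=N-z$, which is an order-reversing involution of $\{0,\ldots,N\}$. We first establish the explicit formulas and then read off the remaining assertions.

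\emph{Indexing.} Since $\rho$ reverses order, the increasing enumeration of $\widetilde Z=\rho(Z)$ is $\widetilde z_r=N-z_{s-r}$ for $0\le r\le s$.

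\emph{Gaps and caps.} For $0\le r<s$,
\[
\widetilde z_{r+1}-\widetilde z_r=z_{s-r}-z_{s-r-1}=2\lambda_{s-1-r},
\]
so $\widetilde\lambda_r=\lambda_{s-1-r}$. Likewise $\widetilde c_L=\widetilde z_0=N-z_s=c_R$, and
\[
\widetilde c_R=N-\widetilde z_s=N-(N-z_0)=c_L.
\]

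\emph{Distance.} Because $\rho$ is an involution, $N-\widetilde Z=Z$. Hence
\[
\dist(\widetilde Z,N-\widetilde Z)=\dist(N-Z,Z)=\dist(Z,N-Z),
\]
by symmetry of $\dist$.

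\emph{Edges.} Let $u,v\in Z$ and put $\widetilde u=N-u$, $\widetilde v=N-v$. Then
\[
\widetilde u+\widetilde v=2N-(u+v).
\]
So $u+v=N-\delta$ holds exactly when $\widetilde u+\widetilde v=N+\delta$, and conversely. Thus the bijection $z\mapsto N-z$ maps $\Gamma_\delta(Z)$ isomorphically onto $\Gamma_\delta(\widetilde Z)$, loops included, while swapping the signs of edges.

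\emph{Blocking.} Take a negative edge at $z_i$. Its right propagation is internally blocked when $i<s$ and $\lambda_i>\delta$, and endpoint-blocked when $i=s$. The point $z_i$ corresponds to $\widetilde z_{s-i}$, which lies on a positive edge. Its left propagation in $\widetilde Z$ uses the gap
\[
\widetilde\lambda_{s-i-1}=\lambda_{s-1-(s-i-1)}=\lambda_i,
\]
and is endpoint-blocked exactly when $s-i=0$, that is, when $i=s$. So the blocking conditions correspond exactly, with left and right interchanged. The positive-edge case is symmetric.

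\emph{Cores.} The $\delta$-core $[z_t+\delta,z_{t+1}-\delta]$ maps under $\rho$ to
\[
[\widetilde z_{s-t-1}+\delta,\ \widetilde z_{s-t}-\delta],
\]
which is the $\delta$-core of the gap $\widetilde G_{s-t-1}$. The left cap core $[0,c_L-\delta]$ maps to $[N-c_L+\delta,N]=[N-\widetilde c_R+\delta,N]$, the right cap core of $\widetilde Z$. So the endpoint-core alternatives of Lemma~\ref{lem:target-placement} are exchanged as well.

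The only step needing care is the index bookkeeping $s-1-r$ in the blocking correspondence. Everything else follows directly from the fact that $\rho$ is an order-reversing involution.
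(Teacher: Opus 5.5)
Your proposal is correct and takes the same route as the paper. Everything is read off from the order-reversing involution $z\mapsto N-z$: the sign change comes from $(N-u)+(N-v)-N=-(u+v-N)$, and the formulas come from the increasing enumeration of $N-Z$. You also write out the blocking and $\delta$-core bookkeeping that the paper leaves implicit, namely $\widetilde\lambda_{s-i-1}=\lambda_i$ and the $\delta$-core of $G_t$ mapping to that of $\widetilde G_{s-t-1}$.
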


\begin{proof}
The sign change follows from
\[
        (N-u)+(N-v)-N=-(u+v-N).
\]
The displayed formulas follow from the increasing ordering of \(N-Z\).
\end{proof}

\begin{lemma}[Propagation]\label{lem:propagation}
If $z_i+z_j=N-\delta$ and $i<s$, then $\lambda_i\ge\delta$.  If $\lambda_i=\delta$, then
\[
        z_{i+1}+z_j=N+\delta.
\]
Dually, if $z_i+z_j=N+\delta$ and $i>0$, then $\lambda_{i-1}\ge\delta$, and equality gives
\[
        z_{i-1}+z_j=N-\delta.
\]
\end{lemma}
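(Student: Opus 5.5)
The Propagation lemma follows directly from the minimality of $\delta$ and the definition of the gaps.  Throughout, the standing assumption $\delta=\dist(Z,N-Z)\ge2$ holds, so $|u+v-N|\ge\delta$ for all $u,v\in Z$.

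For the first assertion, suppose $z_i+z_j=N-\delta$ with $i<s$.  Then $z_{i+1}\in Z$ exists, and we write $z_{i+1}=z_i+2\lambda_i$, so that
\[
        z_{i+1}+z_j-N=2\lambda_i-\delta .
\]
Since $\lambda_i\ge1$ and $\delta\ge2$, we are not assured that this quantity is positive.  We therefore argue by contradiction.  Suppose $\lambda_i<\delta$.  Then $2\lambda_i-\delta$ lies in the open interval $(-\delta,\delta)$, because $-\delta<2\lambda_i-\delta<\delta$ when $0<\lambda_i<\delta$.  This gives $|z_{i+1}+z_j-N|<\delta$, contradicting the minimality of $\delta$.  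Hence $\lambda_i\ge\delta$.

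Alternatively, one can invoke Lemma~\ref{lem:target-placement}.  The point $T=N-z_j=z_i+\delta$ lies strictly above $z_i$, so it must lie in the $\delta$-core of the gap $[z_i,z_{i+1}]$.  This forces $z_{i+1}-z_i\ge2\delta$, that is, $\lambda_i\ge\delta$.

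If $\lambda_i=\delta$, the same computation gives
\[
        z_{i+1}+z_j=N-\delta+2\delta=N+\delta .
\]

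For the dual statement, I would either repeat the argument verbatim with $z_{i-1}=z_i-2\lambda_{i-1}$, giving $z_{i-1}+z_j-N=\delta-2\lambda_{i-1}$, or apply Lemma~\ref{lem:reflection}.  Reflection turns a positive edge into a negative edge, sends index $i$ to $s-i$, and reverses the gap sequence, so $\lambda_{i-1}$ corresponds to $\widetilde\lambda_{s-i}$.  Under this correspondence the dual assertion is exactly the first one applied to $\widetilde Z$.

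The only point requiring care is the strict inequality $|2\lambda_i-\delta|<\delta$ when $1\le\lambda_i\le\delta-1$.  This uses the fact that gaps are positive, which holds because the $z_r$ are strictly increasing.  No other obstacle arises.
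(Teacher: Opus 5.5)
Your proof is correct and is essentially the paper's argument. You compute $z_{i+1}+z_j-N=2\lambda_i-\delta$, use the minimality of $\delta$ to get $\lambda_i\ge\delta$ (explicitly invoking $\lambda_i>0$ to exclude the branch $2\lambda_i-\delta\le-\delta$, which the paper leaves implicit), read off the positive edge at equality, and treat the dual case identically.
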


\begin{proof}
For the negative edge,
\[
        z_{i+1}+z_j-N=-\delta+2\lambda_i.
\]
The definition of $\delta$ gives $|-\delta+2\lambda_i|\ge\delta$, so $\lambda_i\ge\delta$.  Equality gives the displayed positive edge.  The positive case is identical after replacing $z_{i+1}$ by $z_{i-1}$.
\end{proof}

\begin{lemma}\label{lem:isolated-loop}
A vertex with a loop in $\Gamma_\delta$ has no non-loop incident edge.
\end{lemma}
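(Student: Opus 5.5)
A loop at $u$ means $2u=N\pm\delta$, so $u$ is determined by the sign of the loop: $u=(N-\delta)/2$ for a negative loop and $u=(N+\delta)/2$ for a positive loop. Suppose $u$ also has a non-loop edge to some $v\in Z$ with $v\ne u$. The plan is to split according to the signs of the loop and of the edge, and derive a contradiction in each case.

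\emph{Same sign.} If the loop and the edge $uv$ have the same sign, then $u+v=2u$, so $v=u$. This contradicts the assumption that the edge is not a loop, so this case is immediate.

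\emph{Opposite signs.} Say the loop is negative and the edge is positive (the other case is symmetric by Lemma~\ref{lem:reflection}, or by the same computation). Then $2u=N-\delta$ and $u+v=N+\delta$, so $v=u+2\delta$. Now look at the pair $(v,v)$:
\[
        2v-N=2u+4\delta-N=3\delta.
\]
This is consistent with $\dist(Z,N-Z)=\delta$ and gives no contradiction, so the pair $(v,v)$ is not the right test. The useful pairs are those formed with the cut values lying between $u$ and $v$.

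Since $v>u$, there is a next cut value $z_{i+1}$ after $u=z_i$, with $z_{i+1}\le v$. By Lemma~\ref{lem:propagation} applied to the negative edge $u+u=N-\delta$, we get $\lambda_i\ge\delta$, so $z_{i+1}\ge u+2\delta=v$. Hence $z_{i+1}=v$ and $\lambda_i=\delta$ exactly.

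The pair $(u,z_{i+1})$ is then exactly the given positive edge, so this alone does not yet contradict anything. The contradiction should come from combining minimality of $\delta$ with the parity constraint of Lemma~\ref{lem:gap-dictionary}: all cut values have the same parity, so $u+v$ and $2u$ have the same parity. This is consistent with $N-\delta$ and $N+\delta$ both having the parity of $N+\delta$, so parity does not rule it out either. The contradiction must therefore come from sector filling.

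We have an internal gap $G_i$ of height exactly $\delta\ge2$, adjacent on the left to the position $u$. The reflected point $N-u=u+\delta$ lies at the exact center of the core of $G_i$. The step I expect to be the main obstacle is converting this into an incompatibility. My first attempt would be to use that the neighbouring blocks of $G_i$ are forced, via Corollary~\ref{cor:no-adjacent-large}, to have height at most $1$. I would then examine $z_{i-1}$ and $z_{i+2}$:
\[
        z_{i-1}+v=N+\delta-2\lambda_{i-1},
        \qquad
        z_{i+2}+u=N+\delta+2\lambda_{i+1}.
\]
Taking $\lambda_{i-1}=1$ gives $|z_{i-1}+v-N|=\delta-2<\delta$, a contradiction. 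If instead $z_{i-1}$ does not exist, then $u=c_L$ is an endpoint cap, and the endpoint analogue of the corollary forces $c_L\le1$. Then $N=2u+\delta\le2+\delta$, and a direct small check finishes. I would expect the case where the neighbouring heights vanish to be excluded because heights in the block sequence are positive for internal gaps.
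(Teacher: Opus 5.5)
Your argument is correct and is essentially the paper's proof. You force $z_{i+1}=u+2\delta$ with $\lambda_i=\delta\ge2$, and then test the block on the other side of $u$ against minimality of $\delta$ and Corollary~\ref{cor:no-adjacent-large}; you apply the corollary first (to get $\lambda_{i-1}=1$) where the paper applies minimality first (to get $\lambda_{i-1}\ge\delta$). The ``direct small check'' you leave unstated in the endpoint case is just $u+2\delta=v\le N=2u+\delta$, i.e.\ $c_L=u\ge\delta\ge2$, which contradicts $c_L\le1$ and is exactly the inequality the paper uses there.
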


\begin{proof}
Consider a negative loop $2z_i=N-\delta$.  If there is also a positive edge at $z_i$, then $z_i+2\delta\in Z$.  No point of $Z$ lies strictly between $z_i$ and $z_i+2\delta$, since such a point would give a pair closer than $\delta$ to $N$.  Thus the right block at $z_i$ has multiplicity $\delta$.

On the other side, if $z_i-2\delta\in Z$, then no point of $Z$ lies strictly between $z_i-2\delta$ and $z_i$, again by extremality of $\delta$, so the left block has multiplicity $\delta$.  If $z_i-2\delta\notin Z$ and $i>0$, write $z_i-z_{i-1}=2\lambda$.  If $\lambda<\delta$, then
\[
        (z_i+2\delta)+z_{i-1}-N
        =\delta-2\lambda,
\]
whose absolute value is less than $\delta$, contradicting the definition of $\delta$.  Hence the left internal block has multiplicity at least $\delta$.  If $i=0$, then the left cap is $c_L=z_i$; because the incident positive edge gives $z_i+2\delta\in Z\subseteq[0,N]$ and $N=2z_i+\delta$, we have $z_i\ge\delta$.  Thus the missing extension is blocked by a left block or endpoint cap of multiplicity at least $\delta$.  In all cases two adjacent blocks have multiplicity at least $2$, contradicting Corollary~\ref{cor:no-adjacent-large}.  Positive loops follow by reflection.
\end{proof}

\begin{lemma}[Non-loop matching]\label{lem:degree-path}
In the subgraph of $\Gamma_\delta$ consisting only of non-loop edges, every component that contains an edge is a sign-alternating path.  No such path has length greater than one.
\end{lemma}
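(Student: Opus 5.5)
The plan is to bound degrees first and then rule out any path of length two by the same adjacent-large-blocks contradiction used in Lemma~\ref{lem:isolated-loop}.

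\emph{Degree and alternation.} Fix $z_i\in Z$. A negative edge at $z_i$ must go to $N-\delta-z_i$, and a positive edge must go to $N+\delta-z_i$. Hence every vertex has at most one negative and at most one positive non-loop edge. The non-loop subgraph therefore has maximum degree two. Its components containing an edge are paths or cycles, and consecutive edges have opposite signs. So every component is sign-alternating, and it remains to exclude two consecutive edges; this also rules out cycles, since any cycle has length at least two.

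\emph{Excluding length two.} Suppose $u,v,w\in Z$ are distinct, $uv$ is negative and $vw$ is positive, so $u+v=N-\delta$ and $v+w=N+\delta$. I will show that both blocks adjacent to $v$ have height at least $\delta\ge2$, contradicting Corollary~\ref{cor:no-adjacent-large}.

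For the block to the right of $v$, there are two cases.
\begin{itemize}[leftmargin=*]
\item If $v=z_t$ with $t<s$, Lemma~\ref{lem:propagation} applied to the negative edge $uv$ gives $\lambda_t\ge\delta$.
\item If $v=z_s$, then $c_R=N-v=u+\delta\ge\delta$.
\end{itemize}
For the block to the left of $v$, there are again two cases.
\begin{itemize}[leftmargin=*]
\item If $t>0$, the dual statement of Lemma~\ref{lem:propagation} for the positive edge $vw$ gives $\lambda_{t-1}\ge\delta$.
\item If $t=0$, then $c_L=v=N+\delta-w\ge\delta$, because $w\le N$.
\end{itemize}
We cannot have both $t=0$ and $t=s$: that would force $|Z|=1$, while $u,v,w$ are distinct. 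So $v$ has two genuinely adjacent blocks, an internal gap or an endpoint cap on each side, both of height at least $\delta\ge2$. This contradicts Corollary~\ref{cor:no-adjacent-large}.

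\emph{Main point of care.} The only delicate step is the endpoint bookkeeping. When $v=z_0$ or $v=z_s$, the relevant block is a cap, and its height must be read off correctly from the edge equations. This is where Lemma~\ref{lem:endpoint-sector} enters, through the endpoint clause of Corollary~\ref{cor:no-adjacent-large}. Everything else follows directly from the uniqueness of partners in the edge equations.
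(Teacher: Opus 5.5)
Your proof is correct and uses the same mechanism as the paper. Minimality of $\delta$ (the inequality half of Lemma~\ref{lem:propagation}) forces both blocks flanking the middle vertex of two consecutive edges to have height at least $\delta\ge2$, which contradicts Corollary~\ref{cor:no-adjacent-large}. The difference is organizational: you apply this once at an arbitrary degree-two vertex, which excludes cycles and alternating paths of every length at the same time, whereas the paper separately shows that alternating walks cannot close up (the moving endpoint advances by $2\delta$), treats lengths two and $\ge3$ separately, and appeals to terminal-vertex conditions and Lemma~\ref{lem:isolated-loop}, which your argument shows are not needed.
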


\begin{proof}
A vertex $u$ has at most one negative non-loop neighbor, namely $N-\delta-u$, and at most one positive non-loop neighbor, namely $N+\delta-u$.  Thus the degree in the non-loop subgraph is at most two, and degree two forces one edge of each sign.

There is no non-loop cycle.  To see this explicitly, let an alternating walk start with a negative edge from $u_0$ to $v_0$, so $u_0+v_0=N-\delta$.  If the next edge at $v_0$ is positive, its other endpoint is
\[
        u_1=N+\delta-v_0=u_0+2\delta.
\]
Thus after every pair of successive edges the moving endpoint increases by $2\delta$.  By induction the even endpoints along such a walk are $u_0,u_0+2\delta,u_0+4\delta,\ldots$.  Since $\delta>0$, this sequence cannot return to $u_0$ in a finite non-loop cycle.

We exclude sign-alternating paths of length at least two.  Reversing the order of $Z$ and interchanging the signs if necessary, the first two edges may be written
\[
        z_i+z_j=N-\delta,
        \qquad
        z_{i+1}+z_j=N+\delta .
\]
Then $z_{i+1}-z_i=2\delta$.  If a point of $Z$ lay strictly between $z_i$ and $z_{i+1}$, it would be closer than $\delta$ to $N-z_j$.  Hence $z_i,z_{i+1}$ are consecutive and
\[
        \lambda_i=\delta.
\]

Suppose first that the path has two edges.  By Lemma~\ref{lem:isolated-loop}, no vertex of this non-loop component carries a loop.  Since $z_i$ is a terminal vertex in the non-loop subgraph, its positive neighbor is absent from $Z$ in the full graph.  That positive neighbor would be
\[
        N+\delta-z_i=z_j+2\delta.
\]
Thus there is no point $z_j+2\delta$ of $Z$.  Applying the minimality of $\delta$ to the pair $(z_i,z_{j+1})$, if $j<s$, gives $\lambda_j\ge\delta$; if $j=s$, the right endpoint cap satisfies $c_R=N-z_s\ge\delta$ because $z_i=c_R-\delta$.  Equality in the internal case would give the missing positive neighbor, so the block or cap immediately to the right of $z_j$ has multiplicity at least $\delta$.

Similarly, since $z_{i+1}$ is terminal in the non-loop subgraph and carries no loop, its negative neighbor is absent from $Z$ in the full graph.  That neighbor would be $z_j-2\delta$.  If $j>0$, minimality applied to $(z_{i+1},z_{j-1})$ gives $\lambda_{j-1}\ge\delta$; if $j=0$, the left endpoint cap is at least $\delta$.  Thus the block or cap immediately to the left of $z_j$ also has multiplicity at least $\delta$.  The two adjacent blocks at $z_j$ therefore both have multiplicity at least $2$, contradicting Corollary~\ref{cor:no-adjacent-large}.

Now suppose the path has length at least three.  The third edge must have the opposite sign from the second one.  Thus, after possibly exchanging the two endpoints of the second edge, it has the form
\[
        z_{i+1}+z_{j-1}=N-\delta .
\]
Then $z_j-z_{j-1}=2\delta$, so the block immediately to the left of $z_j$ has multiplicity $\delta$.  The terminal condition at $z_i$, together with Lemma~\ref{lem:isolated-loop}, again excludes the positive neighbor $z_j+2\delta$ in the full graph; hence the block or endpoint cap immediately to the right of $z_j$ has multiplicity at least $\delta$.  These adjacent multiplicities are again both at least $2$, contradicting Corollary~\ref{cor:no-adjacent-large}.
\end{proof}

\begin{proposition}[Reduction to fully blocked extremal components]\label{prop:sealed-reduction}
If $\delta=\dist(Z,N-Z)\ge2$, then $\Gamma_\delta$ contains a fully blocked edge or an isolated fully blocked loop.  At the endpoints of such a component, any unblocked internal equality $\lambda=\delta$ would create an additional incident edge; hence the remaining internal blocking gaps at those endpoints have multiplicity strictly larger than $\delta$.
\end{proposition}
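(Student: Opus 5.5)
We first use the structural lemmas already established to see what the nonempty graph $\Gamma_\delta$ can look like.  Since $\delta\ge2$, the graph has at least one edge or loop.  By Lemma~\ref{lem:isolated-loop}, every vertex carrying a loop has no non-loop incident edge.  By Lemma~\ref{lem:degree-path}, every non-loop component is a single edge.  So $\Gamma_\delta$ contains either an isolated loop or an isolated edge $z_iz_j$ with $i\ne j$.  In both cases, the component's endpoints have no further incident edges in $\Gamma_\delta$.

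Now fix such a component, and by Lemma~\ref{lem:reflection} assume it is negative; the positive case is the mirror image.  Let $z_i$ be one endpoint, with $z_i+z_j=N-\delta$.  Propagation at $z_i$ goes to the right.
\begin{itemize}
\item If $i=s$, the edge is endpoint-blocked at $z_i$ by definition.
\item If $i<s$, Lemma~\ref{lem:propagation} gives $\lambda_i\ge\delta$.
\end{itemize}
Suppose equality $\lambda_i=\delta$ held.  Then Lemma~\ref{lem:propagation} produces the positive relation $z_{i+1}+z_j=N+\delta$.  We check that this is a genuinely new edge in each case.
\begin{itemize}
\item For a non-loop edge, it is a positive edge at $z_j$, and $z_{i+1}\ne z_i$.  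This contradicts isolation, since $z_j$ would then have degree two in the non-loop subgraph.  That is exactly the excluded path of length two.
\item For a loop ($i=j$), it is a positive non-loop edge $z_iz_{i+1}$ at a looped vertex, contradicting Lemma~\ref{lem:isolated-loop}.
\end{itemize}
Hence $\lambda_i>\delta$, so the propagation is internally blocked at $z_i$.  Applying the same argument at $z_j$ with the roles of $i$ and $j$ swapped shows the edge is blocked there too.  For a loop the two endpoints coincide, so one check suffices.

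This gives both assertions at once.  The component is fully blocked, and every internal blocking gap at its endpoints satisfies $\lambda>\delta$ strictly, because equality would create an additional incident edge.  For positive components, the same argument runs with leftward propagation, the dual half of Lemma~\ref{lem:propagation}, and the blocking gap $\lambda_{i-1}$.

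The main point needing care is the non-loop case when $z_{i+1}=z_j$, which happens when $j=i+1$.  Then the new relation $z_{i+1}+z_j=N+\delta$ is a positive loop at $z_j$ rather than a second non-loop edge.  Isolation still forbids it: a loop at $z_j$ together with the non-loop edge $z_iz_j$ contradicts Lemma~\ref{lem:isolated-loop}.  I would treat this subcase separately.

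The remaining checks are bookkeeping:
\begin{itemize}
\item the choice of component, which must be an isolated loop or an isolated edge, is made before the blocking argument;
\item the sign convention is handled by Lemma~\ref{lem:reflection}.
\end{itemize}
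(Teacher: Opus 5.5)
Your proof is correct and follows the paper's argument. $\Gamma_\delta$ is nonempty; Lemmas~\ref{lem:isolated-loop} and~\ref{lem:degree-path} reduce every component to an isolated edge or loop; and Lemma~\ref{lem:propagation} shows that an equality $\lambda=\delta$ at an endpoint would create a forbidden extra incident edge, so every internal blocker has $\lambda>\delta$. You are more careful than the paper about the subcase $j=i+1$, where propagation produces a loop rather than a second non-loop edge. The one slip is minor: $\Gamma_\delta$ is nonempty because $\delta$ is attained as a minimum over the finite set $Z\times Z$, not because $\delta\ge2$.
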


\begin{proof}
Choose $u,v\in Z$ with $|u+v-N|=\delta$; then $u$ and $v$ span an edge of $\Gamma_\delta$, with a loop allowed when $u=v$.  Hence $\Gamma_\delta$ is nonempty.  By Lemma~\ref{lem:degree-path}, every component of the non-loop-edge subgraph that contains an edge is a single edge.  By Lemma~\ref{lem:isolated-loop}, every loop component is isolated.  Lemma~\ref{lem:propagation} says that any unblocked internal equality $\lambda=\delta$ would create another edge.  Since the component is already a single edge or isolated loop, the remaining internal blocking gaps must have multiplicity $>\delta$.
\end{proof}

The exclusions below use one mass comparison repeatedly.  Passing to the adjacent cut value across an internal blocking gap of multiplicity $a$ and then reflecting produces a point in a $\delta$-core.  The coordinate identity of this reflected move gives an upper bound $a-\delta$ for the mass between the core containing the reflected point and the remaining blocker.  Sector filling gives a lower bound for the same mass.  Except for one endpoint-blocked case, these two bounds are incompatible; that one-sided endpoint case is handled by a second reflection in Lemma~\ref{lem:endpoint-blocked-growth}.

\begin{lemma}[Mass bounds after one reflected move]\label{lem:reflected-defect}
Assume $\delta=\dist(Z,N-Z)\ge2$.  The following upper bounds hold for a negative fully blocked edge or loop; the positive versions follow by Lemma~\ref{lem:reflection}.
\begin{enumerate}[label=(\roman*),leftmargin=*]
\item Let $z_i+z_j=N-\delta$, $i<j$, and suppose the left internal blocking gap has multiplicity $A=\lambda_i>\delta$.  Put $T=N-z_{i+1}$.  If $T$ lies in the $\delta$-core of the internal block $G_t=[z_t,z_{t+1}]$ with $t<j$, then
\[
        M(G_t,G_j)=\sum_{r=t+1}^{j-1}\lambda_r\le A-\delta.
\]
If instead $T$ lies in the left endpoint core, then
\[
        M(\mathcal B_L,G_j)\le A-\delta.
\]
\item Let $z_i+z_j=N-\delta$, $i<j$, and suppose the right internal blocking gap has multiplicity $B=\lambda_j>\delta$.  Put $T=N-z_{j+1}$.  If $T$ lies in the $\delta$-core of the internal block $G_t=[z_t,z_{t+1}]$ with $t<i$, then
\[
        M(G_t,G_i)=\sum_{r=t+1}^{i-1}\lambda_r\le B-\delta.
\]
If instead $T$ lies in the left endpoint core, then
\[
        M(\mathcal B_L,G_i)\le B-\delta.
\]
\item Let $2z_i=N-\delta$ be a loop with right internal blocking gap $A=\lambda_i>\delta$.  Put $T=N-z_{i+1}$.  If $T$ lies in the $\delta$-core of the internal block $G_t=[z_t,z_{t+1}]$, then
\[
        M(G_t,G_i)=\sum_{r=t+1}^{i-1}\lambda_r\le A-\delta.
\]
If $T$ lies in the left endpoint core, then
\[
        M(\mathcal B_L,G_i)\le A-\delta.
\]
\item Let $z_i+z_s=N-\delta$ be a one-sided endpoint-blocked edge, with right endpoint $z_s$ and left internal blocking gap $\alpha=\lambda_i>\delta$.  Put $T=N-z_{i+1}$.  If $T$ lies in the $\delta$-core of the internal block $G_t=[z_t,z_{t+1}]$, then
\[
        M(G_t,\mathcal B_R)=\sum_{r=t+1}^{s-1}\lambda_r\le \alpha-\delta.
\]
\end{enumerate}
Consequently, whenever sector filling forces the same mass to be larger than the displayed upper bound, the corresponding configuration is impossible.
\end{lemma}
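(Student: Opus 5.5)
Each bound comes from one coordinate identity: express the reflected point $T$ in terms of the cut values, then use the $\delta$-core condition from Lemma~\ref{lem:target-placement} to turn the position of $T$ into an upper bound on a prefix-sum difference of $Z$. By Lemma~\ref{lem:gap-dictionary}, that difference is twice the block mass in question. Throughout I will use the identity $\sum_{r=u}^{v-1}\lambda_r=(z_v-z_u)/2$ for $u\le v$, and $M(\mathcal B_L,G_v)=(z_v-z_0)/2$ with $z_0=c_L$.

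For (i), since $z_{i+1}=z_i+2A$ and $z_i+z_j=N-\delta$, we get
\[
T=N-z_{i+1}=z_j+\delta-2A.
\]
Because $A>\delta$, this gives $T<z_j-\delta$, so $T$ lies to the left of $z_j$. In particular $T$ cannot lie in the right endpoint core, and if it lies in an internal core $G_t$ then $t<j$; this justifies the case list in the statement. If $T$ lies in the core of $G_t$, then $z_{t+1}-\delta\ge T$, so $z_{t+1}\ge z_j+2\delta-2A$. Hence
\[
M(G_t,G_j)=\frac{z_j-z_{t+1}}2\le A-\delta.
\]
If instead $T$ lies in the left endpoint core, then $c_L-\delta\ge T$, and the same computation with $z_0=c_L$ in place of $z_{t+1}$ gives $M(\mathcal B_L,G_j)=(z_j-z_0)/2\le A-\delta$.

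Cases (ii) and (iii) are the same computation with the roles relabelled. In (ii), $z_{j+1}=z_j+2B$ gives $T=z_i+\delta-2B<z_i$, so the core containing $T$ lies left of $z_i$, and we get $(z_i-z_{t+1})/2\le B-\delta$, or the corresponding endpoint version. In (iii), we take $j=i$ in the formula for (i): $T=z_i+\delta-2A$, and the bound on $(z_i-z_{t+1})/2$ follows.

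In (iv), the edge is $z_i+z_s=N-\delta$. Then $T=z_s+\delta-2\alpha<z_s$, so $T$ can lie only in an internal core or the left cap core, and the statement only treats the internal case. The core condition gives $z_{t+1}\ge z_s+2\delta-2\alpha$. Since the block $\mathcal B_R$ sits immediately after $z_s$, we have $M(G_t,\mathcal B_R)=\sum_{r=t+1}^{s-1}\lambda_r=(z_s-z_{t+1})/2\le\alpha-\delta$.

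The positive versions follow by applying Lemma~\ref{lem:reflection}. The final sentence of the lemma is immediate: any sector-filling lower bound that exceeds these upper bounds gives a contradiction.

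No step should be a real obstacle. The only points needing care are, first, confirming that the strict inequality $A>\delta$ places $T$ strictly left of the relevant vertex, which forces $t<j$ (respectively $t<i$) and rules out the right cap. Second, keeping the indexing of $M$ straight: $M$ excludes the blocks at both ends, so the sum runs from $t+1$ to $j-1$ and equals $(z_j-z_{t+1})/2$.
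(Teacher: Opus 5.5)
Your proof is correct and is essentially the paper's argument: compute $T$ from the edge or loop equation, then read the mass bound off the right end of the $\delta$-core, $T\le z_{t+1}-\delta$ (or $T\le c_L-\delta$ for the left cap). The paper instead writes $T=z_t+\delta+2\ell$ with $0\le\ell\le\lambda_t-\delta$, which is the same inequality but quietly uses that all cut values have the same parity; using the core's right endpoint directly, as you do, avoids that step.
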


\begin{proof}
We prove the displayed identities; each inequality then follows from the definition of a $\delta$-core.  In (i),
\[
        T=N-z_{i+1}=z_j+\delta-2A.
\]
If $T=z_t+\delta+2\ell$ with $0\le\ell\le h(G_t)-\delta$, then $z_j-z_t=2A+2\ell$, and hence
\[
        M(G_t,G_j)
        =\frac{z_j-z_{t+1}}2
        =A+\ell-h(G_t)
        \le A-\delta.
\]
If $T$ lies in the left endpoint core, then $T\le c_L-\delta$, so $z_j+\delta-2A\le c_L-\delta$ and
\[
        M(\mathcal B_L,G_j)=\frac{z_j-c_L}{2}\le A-\delta.
\]
The proof of (ii) is the same calculation with the right endpoint of the edge crossed: $T=N-z_{j+1}=z_i+\delta-2B$.

For (iii), $T=N-z_{i+1}=z_i+\delta-2A$.  If $T=z_t+\delta+2\ell$, then $z_i-z_t=2A+2\ell$, so
\[
        M(G_t,G_i)=A+\ell-h(G_t)\le A-\delta.
\]
If the reflected point lies in the left endpoint core, the same endpoint calculation gives $M(\mathcal B_L,G_i)\le A-\delta$.

For (iv), the edge equation gives $z_i=c_R-\delta$, and
\[
        T=N-z_{i+1}=z_s+\delta-2\alpha.
\]
If $T=z_t+\delta+2\ell$ with $0\le\ell\le h(G_t)-\delta$, then $z_s-z_t=2\alpha+2\ell$, and therefore
\[
        M(G_t,\mathcal B_R)
        =\frac{z_s-z_{t+1}}2
        =\alpha+\ell-h(G_t)
        \le\alpha-\delta.
\]
The final sentence follows by comparing these upper bounds with the corresponding sector-filling lower bounds.
\end{proof}

\subsection{Internal fully blocked configurations}

We exclude fully blocked configurations whose reflected point lies in the $\delta$-core of an internal gap.  By Lemma~\ref{lem:reflection} it is enough to write the negative cases.

In the edge case below we pass to an adjacent cut value across a blocking gap of minimum multiplicity.  The opposite side then gives the sharpest possible upper bound for the intervening mass.  Since the sector estimate is monotone in the blocking multiplicities, this choice turns the filling lower bound into a contradiction.

\begin{proposition}[Exclusion of internally blocked edges]\label{prop:internal-edge}
There is no internally fully blocked negative edge
\[
        z_i+z_j=N-\delta,
        \qquad i<j,
\]
with
\[
        A=\lambda_i>\delta,
        \qquad
        B=\lambda_j>\delta,
\]
for which the reflected point obtained after replacing one endpoint by the adjacent $Z$-point across a blocking gap of minimum multiplicity lies in the $\delta$-core of an internal gap.  If the two multiplicities are equal, either endpoint may be chosen; in the negative normal form used below we choose the left endpoint.  By reflection, the same conclusion holds for internally fully blocked positive edges.
\end{proposition}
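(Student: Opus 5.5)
We argue by contradiction, in the negative normal form with $A=\lambda_i\le B=\lambda_j$ and with the reflected move made across the left blocking gap $G_i$. Put $T=N-z_{i+1}$. Suppose $T$ lies in the $\delta$-core of an internal gap $G_t$. The first task is to locate $t$. Since $T=z_j+\delta-2A<z_j$, we have $t<j$. Since $T=N-z_{i+1}$ and $z_i+z_j=N-\delta$, extremality of $\delta$ should also keep $T$ away from the block $G_j$ itself. So Lemma~\ref{lem:reflected-defect}(i) applies and gives
\[
        M(G_t,G_j)\le A-\delta .
\]

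Next we bound the same mass from below by sector filling. The blocks $G_t$ and $G_j$ correspond, by Lemma~\ref{lem:gap-dictionary}, to primitive rays $A_{t+2}<A_{j+2}$ with multiplicities $p=\lambda_t$ and $q=B$. Since $T$ lies in the $\delta$-core of $G_t$, that core is nonempty, so $p\ge\delta$. Lemma~\ref{lem:sector-filling} then gives $M(G_t,G_j)\ge\Theta(p,B)$.

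We want $\Theta(p,B)>A-\delta$. Because $B\ge A>\delta$, Lemma~\ref{lem:absorption} with $q=B$, $q_0=A$ delivers this as soon as $p>\delta$. This is exactly why the minimum-multiplicity gap is chosen: the upper bound involves the smaller blocker $A$, while the sector estimate uses the larger multiplicity $B$. Combining the two bounds gives $A-\delta\ge M(G_t,G_j)\ge\Theta(p,B)>A-\delta$, a contradiction. The case $A=B$ is covered by the same choice.

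The main obstacle is the boundary case $p=\lambda_t=\delta$, where the core of $G_t$ is the single point $z_t+\delta$. Here I would write $T=z_t+\delta$ exactly. The identity from Lemma~\ref{lem:reflected-defect}(i) then sharpens to $M(G_t,G_j)=A-\delta$ (take $\ell=0$). This means we need $\Theta(\delta,B)>A-\delta$. By Lemma~\ref{lem:theta-bound}, $\Theta(\delta,B)\ge(\delta-1)(B-1)/2$. This exceeds $A-\delta$ whenever $\delta\ge3$ or $B$ is comfortably larger than $A$.

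For $\delta=2$ that bound only gives $(B-1)/2$, so more is needed. I would use the extra information that $T=z_t+\delta$ means $z_{i+1}+z_t=N-\delta$, which is a second negative edge at $z_{i+1}$. Two cases then arise. If $t\ne i$, this contradicts Lemma~\ref{lem:degree-path}, since $z_{i+1}$ is already an endpoint of the edge at $z_i$, or it creates a longer component. If $t=i$, then $\lambda_i=\delta$, contradicting $A>\delta$. Thus the degenerate core case is ruled out combinatorially, by Proposition~\ref{prop:sealed-reduction}, rather than by the estimate. The positive case follows by Lemma~\ref{lem:reflection}.
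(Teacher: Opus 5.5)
\textbf{Gap 1: the case $A>B$ is never treated.} Your main subcase is correct and is the paper's argument. With $A\le B$ and $\lambda_t>\delta$, Lemma~\ref{lem:reflected-defect}(i) gives $M(G_t,G_j)\le A-\delta$. Lemma~\ref{lem:absorption} with $q=B\ge q_0=A$ gives $\Theta(\lambda_t,B)>A-\delta$, a contradiction. However, the statement also covers $A>B$. There the minimum blocker is $B$ at $z_j$, and the reflected point is $T'=N-z_{j+1}=z_i+\delta-2B<z_i$. It lies \emph{outside} the edge, to the left of $z_i$, not between its endpoints. Lemma~\ref{lem:reflection} cannot produce this case from yours. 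Reflection is an order-reversing isometry, so it preserves whether the reflected point lies between the edge's endpoints. It sends your configuration to a positive edge with the point between, and the $A>B$ configuration to a positive edge with the point outside. "The positive case follows by reflection" therefore does not cover it. You need the mirror computation. Write $T'=z_t+\delta+2\ell$ with $t<i$. Then $M(G_t,G_i)=B+\ell-\lambda_t\le B-\delta$. For $\lambda_t>\delta$, this contradicts $\Theta(\lambda_t,A)>B-\delta$, which is Lemma~\ref{lem:absorption} with $q=A\ge q_0=B$.

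\textbf{Gap 2: the degenerate case $\lambda_t=\delta$ is not closed.} The vertex $z_{i+1}$ is not an endpoint of the edge $z_iz_j$. Indeed $j\ne i+1$ by Corollary~\ref{cor:no-adjacent-large}, since $A,B\ge3$. So the new negative edge $z_{i+1}z_t$ (with $t\ne i$) is vertex-disjoint from $z_iz_j$. A single such edge is an admissible component of $\Gamma_\delta$, so Lemma~\ref{lem:degree-path} is not yet violated. Proposition~\ref{prop:sealed-reduction} does not help either, because it only constrains the blockers of the component you started from, not $G_t$. The missing step is Lemma~\ref{lem:propagation} applied to the \emph{new} edge at $z_t$. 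Because $\lambda_t=\delta$ exactly, it gives $z_{i+1}+z_{t+1}=N+\delta$, so $z_{i+1}$ carries edges of both signs. That is a sign-alternating path of length two, contradicting Lemma~\ref{lem:degree-path}. If $t=i+1$, it is instead a loop at $z_{i+1}$ with an incident non-loop edge, contradicting Lemma~\ref{lem:isolated-loop}. This argument works for every $\delta\ge2$, so your detour through $\Theta(\delta,B)$ is unnecessary. The same step, with $z_{j+1}$ in place of $z_{i+1}$, is also needed in the $A>B$ case.
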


\begin{proof}
First assume $A\le B$.  Thus the left gap has minimum multiplicity; in the tie case this is our chosen endpoint.  Replace $z_i$ by $z_{i+1}$ and put
\[
        T=N-z_{i+1}=z_j+\delta-2A.
\]
The middle mass between the two blocking gaps is
\[
        M_{\rm mid}=\sum_{r=i+1}^{j-1}\lambda_r=\frac{z_j-z_{i+1}}2.
\]
By Lemmas~\ref{lem:sector-filling} and~\ref{lem:theta-linear}, $M_{\rm mid}\ge\Theta(A,B)\ge A-1$.  Since $\delta\ge2$, this implies
\[
        z_j-z_{i+1}\ge 2A-\delta,
\]
and hence $T\in[z_{i+1},z_j)$.  By Lemma~\ref{lem:target-placement}, $T$ lies in an internal $\delta$-core or an endpoint $\delta$-core.  In the internal case considered in this proposition, $T$ lies in the $\delta$-core of some internal gap $[z_t,z_{t+1}]$, where $i+1\le t<j$.  Let $\gamma=\lambda_t$.

If $\gamma=\delta$, then the core is the single point $z_t+\delta$, so the newly created negative edge $z_{i+1}z_t$ satisfies $z_{i+1}+z_t=N-\delta$.  Propagation through the gap $[z_t,z_{t+1}]$ creates a positive edge incident to it.  If this negative edge is not a loop, we obtain a nontrivial path component of $\Gamma_\delta$, contradicting Lemma~\ref{lem:degree-path}; in the loop case, the loop has an incident non-loop edge, contradicting Lemma~\ref{lem:isolated-loop}.  Thus $\gamma>\delta$.

Write
\[
        T=z_t+\delta+2\ell,
        \qquad
        0\le \ell\le \gamma-\delta.
\]
Since also $T=z_j+\delta-2A$, we have
\[
        z_j-z_t=2A+2\ell.
\]
The mass between this reflected gap and the right blocking gap is
\[
        M_{\rm right}=\sum_{r=t+1}^{j-1}\lambda_r
        =\frac{z_j-z_{t+1}}2
        =A+\ell-\gamma
        \le A-\delta.
\]
On the other hand, sector filling between the gap of multiplicity $\gamma$ and the blocking gap of multiplicity $B$ gives
\[
        M_{\rm right}\ge\Theta(\gamma,B).
\]
Since $\gamma>\delta$ and $B\ge A>\delta$, Lemma~\ref{lem:absorption} gives
\[
        \Theta(\gamma,B)>A-\delta,
\]
a contradiction.

Now assume $A>B$.  Replace $z_j$ by $z_{j+1}$ and put
\[
        T'=N-z_{j+1}=z_i+\delta-2B.
\]
If $T'$ lies in the $\delta$-core of an internal gap $[z_t,z_{t+1}]$ with $t<i$, let $\gamma=\lambda_t$.  If $\gamma=\delta$, then $T'=z_t+\delta$ and the newly created edge $z_{j+1}z_t$ satisfies $z_{j+1}+z_t=N-\delta$; propagation through $[z_t,z_{t+1}]$ creates either a nontrivial alternating component or a loop with an incident non-loop edge.  Thus $\gamma>\delta$.  Writing
\[
        T'=z_t+\delta+2\ell,
        \qquad 0\le\ell\le\gamma-\delta,
\]
one obtains
\[
        \sum_{r=t+1}^{i-1}\lambda_r
        =B+\ell-\gamma
        \le B-\delta.
\]
Sector filling between $\gamma$ and $A$ gives a lower bound
\[
        \Theta(\gamma,A)>B-\delta,
\]
again a contradiction.
\end{proof}

\begin{proposition}[Exclusion of internally blocked loops]\label{prop:internal-loop}
There is no internally fully blocked loop whose target lies in the $\delta$-core of an internal gap.
\end{proposition}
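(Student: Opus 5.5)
By Lemma~\ref{lem:reflection} it suffices to treat a negative loop $2z_i=N-\delta$. Full internal blocking means both adjacent gaps are internal with multiplicities $A'=\lambda_{i-1}>\delta$ and $A=\lambda_i>\delta$. Two cases need care: a blocked side that is really an endpoint cap, and a positive loop, which should be the mirror image of a negative loop whose blocked side faces the other way. I will handle them by reflection or leave them to the endpoint-blocked analysis, and here work with two internal blockers $G_{i-1}$ and $G_i$.

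The first step crosses the right blocker, as in Lemma~\ref{lem:reflected-defect}(iii). Put $T=N-z_{i+1}=z_i+\delta-2A$. Since $T<z_i$, the hypothesis of the proposition places $T$ in the $\delta$-core of an internal gap $G_t$ with $t\le i-1$. Suppose $\gamma=\lambda_t=\delta$. Then $T=z_t+\delta$, so $z_{i+1}+z_t=N-\delta$ is a new negative edge, and Lemma~\ref{lem:propagation} through $G_t$ adds a positive edge at $z_{i+1}$. This gives a non-loop path of length two, contradicting Lemma~\ref{lem:degree-path}. Hence $\gamma>\delta$. Writing $T=z_t+\delta+2\ell$ with $0\le\ell\le\gamma-\delta$, part (iii) gives
\[
M(G_t,G_i)=\sum_{r=t+1}^{i-1}\lambda_r=A+\ell-\gamma\le A-\delta .
\]

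Next split on $t$. If $t<i-1$, the mass $M(G_t,G_i)$ includes the term $\lambda_{i-1}=A'$, so $A'\le A-\delta$. Reflection exchanges the roles of the two blockers, so by symmetry we may assume $A\le A'$, which makes $t<i-1$ impossible. Without the symmetry I would instead cross the smaller blocker, as in Proposition~\ref{prop:internal-edge}. This leaves $t=i-1$, so the reflected point lies in the core of the left blocker itself. Then $M(G_{i-1},G_i)=0$, so the two blocks are adjacent with multiplicities $A',A\ge 3$. This contradicts Corollary~\ref{cor:no-adjacent-large}, and in this case the contradiction is immediate.

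The main obstacle is making the case $t<i-1$ clean without relying on a naive symmetry. If the symmetry is unavailable, I would use sector filling between $G_t$ and $G_i$. Lemma~\ref{lem:sector-filling} gives $M(G_t,G_i)\ge\Theta(\gamma,A)$. With $\gamma>\delta$ and $A>\delta$, Lemma~\ref{lem:absorption} (taking $q_0=A$) gives $\Theta(\gamma,A)>A-\delta$. This contradicts the upper bound $A-\delta$, and Lemma~\ref{lem:absorption} does not need $p\le q$, so the argument closes the case directly. The main thing to check is that crossing the right blocker always lands strictly left of $z_i$ and never in the left endpoint core. The endpoint-core placement is the separate alternative handled later, so it lies outside the hypothesis here.
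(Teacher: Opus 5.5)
Your last paragraph is the paper's proof. You cross the right blocker to get $T=z_i+\delta-2A$ in the core of some $G_t$ with $t<i$, exclude $\gamma=\delta$ by propagation, and bound $M(G_t,G_i)=A+\ell-\gamma\le A-\delta$. You then contradict this with $\Theta(\gamma,A)>A-\delta$ from Lemma~\ref{lem:absorption}, and this works uniformly in $t$ (for $t=i-1$ the mass is simply $0$).

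The earlier case split should be discarded. For a negative loop, Definition~\ref{def:blocked} makes only the right gap $\lambda_i$ a blocker. A second blocker $\lambda_{i-1}>\delta$ would already contradict Corollary~\ref{cor:no-adjacent-large}. The ``symmetry'' $A\le A'$ is also not available: Lemma~\ref{lem:reflection} turns a negative loop into a positive one rather than exchanging its two sides.
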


\begin{proof}
Consider a negative loop $2z_i=N-\delta$ with right blocking gap $A=\lambda_i>\delta$.  Cross this blocker and put
\[
        T=N-z_{i+1}=z_i+\delta-2A.
\]
If $T$ lies in the $\delta$-core of an internal gap $[z_t,z_{t+1}]$, let $\gamma=\lambda_t$.  The equality $\gamma=\delta$ would create, by propagation from the newly created reflected edge, either a nontrivial sign-alternating component, contradicting Lemma~\ref{lem:degree-path}, or a loop with an incident non-loop edge, contradicting Lemma~\ref{lem:isolated-loop}; hence $\gamma>\delta$.  Write
\[
        T=z_t+\delta+2\ell,
        \qquad 0\le \ell\le\gamma-\delta.
\]
Then
\[
        z_i-z_t=2A+2\ell,
\]
and the mass between $[z_t,z_{t+1}]$ and the blocking gap $A$ is
\[
        \sum_{r=t+1}^{i-1}\lambda_r
        =A+\ell-\gamma
        \le A-\delta.
\]
Sector filling gives at least $\Theta(\gamma,A)$, and Lemma~\ref{lem:absorption} gives $\Theta(\gamma,A)>A-\delta$, a contradiction.  Positive loops follow from Lemma~\ref{lem:reflection}.
\end{proof}

\subsection{Endpoint fully blocked configurations}

We now treat the cases in which the reflected point lies in an endpoint core.  The next lemma reduces the internally blocked endpoint-core configurations to two negative left-cap forms.

\begin{lemma}[Endpoint-core reductions]\label{lem:endpoint-core-reductions}
It suffices to exclude the following two negative left-cap configurations; the reflected configurations are obtained by Lemma~\ref{lem:reflection}.
\begin{enumerate}
\item A negative fully blocked edge
\[
        z_i+z_j=N-\delta,
        \qquad i<j,
\]
with internal blocking gaps
\[
        A=\lambda_i>B=\lambda_j>\delta,
\]
such that the reflected point obtained by passing across the blocking gap of minimum multiplicity,
\[
        T=N-z_{j+1}=z_i+\delta-2B,
\]
lies in the left endpoint \(\delta\)-core.
\item A negative fully blocked loop
\[
        2z_i=N-\delta,
        \qquad A=\lambda_i>\delta,
\]
such that
\[
        T=N-z_{i+1}=z_i+\delta-2A
\]
lies in the left endpoint \(\delta\)-core.
\end{enumerate}
\end{lemma}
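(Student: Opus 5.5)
The plan is a case enumeration. We list every internally blocked fully blocked component whose reflected point, after one move across a blocker, lands in an endpoint core. Then we show that each such case is excluded outright, or is one of the two forms (1), (2), or is the mirror image of one of them under Lemma~\ref{lem:reflection}.

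By Proposition~\ref{prop:sealed-reduction}, an internally blocked component is one of two things. The first is a negative or positive edge $z_iz_j$ with both blocking gaps internal and of multiplicity $>\delta$. The second is an isolated loop whose single blocking gap is internal and $>\delta$. Lemma~\ref{lem:reflection} exchanges negative and positive and exchanges left and right, so we may assume the component is negative. The price is that both endpoint cores must now be handled, not just the left one.

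For a negative loop $2z_i=N-\delta$ blocked at $z_{i+1}$, the reflected point is $T=z_i+\delta-2A<z_i$. So $T$ cannot lie in the right endpoint core, since that core lies to the right of $z_s\ge z_i$. Lemma~\ref{lem:target-placement} then leaves only the left core, which is exactly form (2).

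For a negative edge with $A=\lambda_i$ and $B=\lambda_j$, we follow the convention of Proposition~\ref{prop:internal-edge} and cross the gap of minimum multiplicity, choosing the left gap in case of a tie.

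Suppose $A\le B$. The proof of Proposition~\ref{prop:internal-edge} already shows, using $\Theta(A,B)\ge A-1$ and $\delta\ge2$, that $T=N-z_{i+1}\in[z_{i+1},z_j)$. Hence $T$ lies strictly between cut values and cannot lie in either endpoint core. This case is therefore already covered by the internal proposition.

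Suppose $A>B$. Then $T'=N-z_{j+1}=z_i+\delta-2B<z_i$, so again $T'$ is not in the right core. Either $T'$ is in an internal core, which Proposition~\ref{prop:internal-edge} handles, or it is in the left core, which is form (1).

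It remains to check the positive components. Under $z\mapsto N-z$ these become negative components of $\widetilde Z$: the blockers $\lambda$ are carried to $\widetilde\lambda$ and the minimum-multiplicity choice is preserved. So the left-core forms for $\widetilde Z$ are the right-core forms for $Z$.

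The main obstacle is the tie convention under reflection. In the tie case $A=B$, the reflected configuration has the preferred endpoint on the other side. We need to confirm that crossing either gap still lands strictly inside $[z_{i+1},z_j)$, or symmetrically inside $(z_i,z_{j+1}]$ when the right gap is crossed. The bound $\Theta(A,B)\ge A-1\ge B-1$ gives this symmetrically, so the tie case never reaches an endpoint core under either choice.
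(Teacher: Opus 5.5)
Your main argument is correct and is essentially the paper's proof. You use the same split: the loop target $z_i+\delta-2A$; the case $A\le B$, where the target lies in $[z_{i+1},z_j)$ and Proposition~\ref{prop:internal-edge} disposes of it; the case $A>B$, which gives form (1); and reflection for positive components. Your explicit remark that every negative target lies either in $[z_{i+1},z_j)$ or to the left of $z_i$ is a useful addition. It shows the right cap never arises before reflecting, which the paper compresses into ``the reflected statements give the right-cap versions.''

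Your final paragraph should be deleted, however. The ``obstacle'' is not real: after reflecting, you apply the tie convention afresh to the negative edge of $\widetilde Z$, as your preceding paragraph already says. The justification you give there is also false. For a negative edge, crossing the right gap gives $N-z_{j+1}=z_i+\delta-2B<z_i$. This point is never in $(z_i,z_{j+1}]$ and could a priori lie in the left core, and no symmetric use of $\Theta(A,B)\ge B-1$ changes that. The two crossings are inherently asymmetric because both propagations of a negative edge go to the right. That asymmetry is exactly why form (1) requires $A>B$ and has its target to the left of $z_i$. A tie configuration is harmless only because crossing the left gap already sends it to an internal core, where Proposition~\ref{prop:internal-edge} excludes it.
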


\begin{proof}
Consider a negative fully blocked edge with two internal blocking gaps.  If \(A\le B\), passing across the left blocking gap gives \(N-z_{i+1}\in [z_{i+1},z_j)\) by the calculation in Proposition~\ref{prop:internal-edge}.  Since this interval lies strictly between two cut values of \(Z\), the reflected point cannot belong to either endpoint core; Lemma~\ref{lem:target-placement} therefore places it in an internal core.  Proposition~\ref{prop:internal-edge} then gives a contradiction.  Hence a remaining endpoint-core edge with two internal blocking gaps must have \(A>B\), and passing across the minimum blocking gap \(B\) gives the displayed reflected point.  The reflected statements give the right-cap and positive-edge versions.

For a negative loop with an internal blocker, the same calculation starts from \(2z_i=N-\delta\) and the target \(N-z_{i+1}=z_i+\delta-2\lambda_i\).  If this target is internal, Proposition~\ref{prop:internal-loop} applies; otherwise the left-cap representative is the displayed loop.  Positive loops follow by reflection.
\end{proof}

\begin{lemma}[Endpoint-blocked cases with endpoint-core reflection]\label{lem:endpoint-blocked-endpoint-core}
No endpoint-blocked fully blocked edge has its target in an endpoint \(\delta\)-core.  Likewise, a loop whose only possible propagation is endpoint-blocked is impossible.
\end{lemma}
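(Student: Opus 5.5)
The plan is a direct case analysis. In both cases the coordinate identity for the reflected point is already incompatible with the block decomposition $N=c_L+2\sum_r\lambda_r+c_R$ (Theorem~\ref{input:lattice}, Lemma~\ref{lem:gap-dictionary}) or with the gcd formulas of Lemma~\ref{lem:endpoint-caps}. I do not expect to need sector filling here. By Lemma~\ref{lem:reflection} it suffices to treat negative configurations and then transfer to positive ones.

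\emph{Edges.} For a negative edge $z_i+z_j=N-\delta$ with $i<j$, endpoint blocking can occur only at $z_j=z_s$, since only the index $s$ is endpoint-blocked for rightward propagation. So the edge is $z_i+z_s=N-\delta$. Full blocking forces the other side to be internally blocked with $\alpha=\lambda_i>\delta$; the one-sided configuration has $i<s$, since $i=s$ is the loop case. Cross this blocker and set
\[
T=N-z_{i+1}=z_s+\delta-2\alpha .
\]
\begin{itemize}
\item Since $\alpha>0$, we have $T<z_s+\delta$, so $T$ is not in the right endpoint core $[z_s+\delta,N]$.
\item Suppose $T$ is in the left endpoint core, so $T\le c_L-\delta$. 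Then
\[
\frac{z_s-c_L}{2}\le\alpha-\delta .
\]
The left side is the total internal mass $\sum_{r=0}^{s-1}\lambda_r$. This sum contains the term $\lambda_i=\alpha$, and all other terms are nonnegative. Hence $\alpha\le\alpha-\delta$, which contradicts $\delta\ge2$.
\end{itemize}
This excludes the endpoint-blocked edge with endpoint-core target. The positive version, blocked at $z_0$, follows by reflection.

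\emph{Loops.} A negative loop whose only possible propagation is endpoint-blocked sits at $z_s$ with $2z_s=N-\delta$. Then
\[
c_R=N-z_s=\frac{N+\delta}{2}>\frac N2 .
\]
By Lemma~\ref{lem:endpoint-caps}, $c_R=\gcd(k,a+1)-1$. Because $a\not\equiv-1\pmod k$, $\gcd(k,a+1)$ is a proper divisor of $k$, so it is at most $k/2$. This gives
\[
c_R\le \frac k2-1=\frac{N-1}{2}<\frac{N+\delta}{2},
\]
a contradiction. The positive loop sits at $z_0$ with $2z_0=N+\delta$, so $c_L=(N+\delta)/2$. It is excluded the same way using $c_L=\gcd(k,a)-1\le k/2-1$, which holds because $a\not\equiv0$.

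The only delicate point is bookkeeping. I must check that "endpoint-blocked" in Definition~\ref{def:blocked} really forces the configurations above: the endpoint index is $s$ for negative edges and $0$ for positive ones. I must also check that an edge with both endpoints endpoint-blocked is necessarily a loop. The first check is immediate from the definition. The second holds because two distinct vertices cannot both have index $s$ (respectively $0$).
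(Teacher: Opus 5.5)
Your proof is correct. For loops it is the paper's argument: a cap $c_R=(N+\delta)/2$ would make $\gcd(k,a+1)$ a divisor of $k$ larger than $k/2$, hence equal to $k$.

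For edges you take a genuinely shorter route than the paper.

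\begin{itemize}
\item The paper starts from $(z_s-c_L)/2\le\alpha-\delta$ and passes to the weaker prefix bound $(z_i-c_L)/2\le\alpha-\delta$. This throws away the contribution $\lambda_i=\alpha$.
\item It then needs endpoint sector filling together with Lemma~\ref{lem:absorption} to force $c_L=\delta$.
\item It finishes with a separate check: $T=0$ gives $z_s=N$ and then $z_i=-\delta$.
\end{itemize}

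You keep the term $\lambda_i$ inside $(z_s-c_L)/2=\sum_{r=0}^{s-1}\lambda_r\ge\alpha$, which contradicts the upper bound immediately. Equivalently,
\[
T=z_s+\delta-2\alpha\ge z_{i+1}+\delta-2\alpha=z_i+\delta\ge c_L+\delta ,
\]
and together with $T<z_s$ this places the target strictly between $z_0$ and $z_s$. By Lemma~\ref{lem:target-placement} it therefore always lies in an internal core.

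So the endpoint-core alternative in the one-sided row of Lemma~\ref{lem:sealed-exhaustion} is vacuous for purely order-theoretic reasons. This holds whatever the value of $c_L$ and without any sector filling. What the paper's longer version buys is only uniformity with the ``reflected-mass upper bound versus sector-filling lower bound'' template used in the rest of the section.
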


\begin{proof}
By reflection it is enough to consider a negative edge
\[
        z_i+z_s=N-\delta,
        \qquad i<s,
\]
whose propagation at \(z_i\) is internally blocked by \(A=\lambda_i>\delta\).  Crossing this blocker gives
\[
        T=N-z_{i+1}=z_s+\delta-2A<z_s.
\]
If \(T\) lies in an endpoint core, it must lie in the left endpoint core.  Put \(C=c_L\).  Then
\[
        z_s+\delta-2A=T\le C-\delta,
\]
so
\[
        \frac{z_s-C}{2}\le A-\delta.
\]
The mass between the left endpoint block \(C\) and the internal blocker \(A\) is
\[
        \frac{z_i-C}{2}
        \le \frac{z_s-C}{2}
        \le A-\delta.
\]
If \(C>\delta\), sector filling and Lemma~\ref{lem:absorption} give
\[
        \frac{z_i-C}{2}\ge \Theta(C,A)>A-\delta,
\]
a contradiction.  Hence \(C=\delta\).  The left endpoint core is then the single point \(0\), so \(T=0\) and \(z_{i+1}=N\).  This forces \(z_s=N\), and the edge equation gives \(z_i=-\delta\), impossible.

We rule out loops whose only possible propagation is endpoint-blocked.  For instance, if \(2z_s=N-\delta\), then
\[
        c_R=N-z_s=\frac{N+\delta}{2}.
\]
By Lemma~\ref{lem:endpoint-caps}, \(c_R+1=\gcd(k,a+1)\) divides \(k=N+1\), but
\[
        c_R+1=\frac{k+\delta+1}{2}>\frac k2.
\]
Thus \(c_R+1=k\), which would imply \(a+1\equiv0\pmod{k}\), contrary to the hypotheses.  The reflected endpoint-loop cases are identical.
\end{proof}

\begin{lemma}[Endpoint cap exactness]\label{lem:cap-exact}
Suppose a negative fully blocked edge in Lemma~\ref{lem:endpoint-core-reductions} remains after the internal-core exclusions.  Then it has the form
\[
        A=\lambda_i>B=\lambda_j>\delta,
\]
and the target
\[
        T=N-z_{j+1}=z_i+\delta-2B
\]
lies in the left endpoint \(\delta\)-core.  In this case \(c_L=\delta\).  The same conclusion holds for a remaining negative loop.
\end{lemma}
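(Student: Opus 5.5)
The proof of Lemma~\ref{lem:cap-exact} splits into a reduction step, which identifies the form of the surviving configuration, and a mass comparison, which pins down $c_L$.

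\emph{The form $A>B>\delta$.} This follows from Lemma~\ref{lem:endpoint-core-reductions}. A fully blocked edge with two internal blockers satisfying $A\le B$ produces a reflected point in $[z_{i+1},z_j)$. That point cannot lie in an endpoint core, so Proposition~\ref{prop:internal-edge} excludes it. A fully blocked edge with an endpoint blocker and an endpoint-core target is excluded by Lemma~\ref{lem:endpoint-blocked-endpoint-core}. What survives is exactly the configuration $A=\lambda_i>B=\lambda_j>\delta$ with $T=z_i+\delta-2B$ in the left core, and this gives the form claimed in the lemma.

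\emph{The value of $c_L$.} Put $C=c_L$. Since $T$ lies in the left $\delta$-core, that core is nonempty, so $C\ge\delta$. Lemma~\ref{lem:reflected-defect}(ii) gives the upper bound
\[
M(\mathcal B_L,G_i)=\frac{z_i-C}{2}\le B-\delta .
\]
Now suppose $C>\delta$. Both $C$ and the blocker $A$ are positive, so endpoint sector filling (Lemma~\ref{lem:endpoint-sector}) applies between the left cap and $G_i$. It gives $M(\mathcal B_L,G_i)\ge\Theta(C,A)$. Apply Lemma~\ref{lem:absorption} with $p=C>\delta$, $q=A$ and $q_0=B$; this is valid because $A>B>\delta$. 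It yields $\Theta(C,A)>B-\delta$, contradicting the upper bound. Hence $C=\delta$.

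\emph{The loop.} Here $T=z_i+\delta-2A$ lies in the left core. Lemma~\ref{lem:reflected-defect}(iii) gives $M(\mathcal B_L,G_i)\le A-\delta$. If $C>\delta$, then Lemma~\ref{lem:endpoint-sector} together with Lemma~\ref{lem:absorption}, now with $q=q_0=A$, gives $\Theta(C,A)>A-\delta$, again a contradiction. So $c_L=\delta$ in the loop case as well.

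The main obstacle is checking that the endpoint sector estimate really applies to the pair $(\mathcal B_L,G_i)$. In both cases the mass being bounded is the total multiplicity strictly between the left boundary ray $(1,0)$ and the ray $A_{i+2}$ corresponding to $G_i$. By Lemma~\ref{lem:gap-dictionary} this is exactly the quantity $M(E,R)$ of Lemma~\ref{lem:endpoint-sector}, with $E=(1,0)$ of multiplicity $C$. One must also check that $G_i$ is genuinely internal ($i<s$), which holds because $A$ is an internal blocker. With these identifications in place, the rest is the two inequality comparisons above.
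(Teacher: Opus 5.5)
Your proposal is correct and follows the paper's proof essentially step for step. In both cases, edge and loop, the core-membership bound $(z_i-c_L)/2\le B-\delta$ (respectively $A-\delta$) is set against the endpoint sector-filling bound $\Theta(c_L,A)$ through Lemma~\ref{lem:absorption}, which forces $c_L=\delta$. The only differences are cosmetic: you cite Lemma~\ref{lem:reflected-defect}(ii),(iii) for the upper bounds, and you re-derive the form $A>B>\delta$ from Lemma~\ref{lem:endpoint-core-reductions}, whereas the paper simply takes that form as the hypothesis.
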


\begin{proof}
Let \(C=c_L\).  Since \(T\) lies in the left endpoint core, \(0\le T\le C-\delta\), so \(C\ge\delta\).  If \(C>\delta\), then
\[
        z_i+\delta-2B\le C-\delta,
\]
and hence
\[
        \frac{z_i-C}{2}\le B-\delta.
\]
The left side is the mass between the endpoint block \(C\) and the internal blocker \(A\).  Sector filling gives it at least \(\Theta(C,A)\).  Since \(C>\delta\) and \(A>B>\delta\), Lemma~\ref{lem:absorption} gives \(\Theta(C,A)>B-\delta\), a contradiction.  Thus \(C=\delta\).

For a negative loop, the target is \(T=N-z_{i+1}=z_i+\delta-2A\).  If \(T\in[0,C-\delta]\) and \(C>\delta\), then
\[
        \frac{z_i-C}{2}\le A-\delta,
\]
while sector filling gives at least \(\Theta(C,A)>A-\delta\).  Hence \(C=\delta\).
\end{proof}

\begin{lemma}[Endpoint normal forms]\label{lem:endpoint-normal-forms}
After the internal-core exclusions, any remaining negative left-cap endpoint-core configuration has one of the following forms.  For a fully blocked edge,
\begin{equation}\label{eq:endpoint-edge-normal}
        c_L=\delta,
        \qquad
        c_R=0,
        \qquad
        z_i=2B-\delta,
        \qquad
        z_j=N-2B,
        \qquad
        z_{j+1}=N,
\end{equation}
with \(A=\lambda_i>B=\lambda_j>\delta\).  For a fully blocked loop,
\begin{equation}\label{eq:endpoint-loop-normal}
        c_L=\delta,
        \qquad
        c_R=0,
        \qquad
        z_i=2A-\delta,
        \qquad
        z_{i+1}=N,
        \qquad
        N=4A-\delta,
\end{equation}
with \(A=\lambda_i>\delta\).
\end{lemma}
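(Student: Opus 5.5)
The plan is to read off the normal forms directly from Lemma~\ref{lem:cap-exact}, together with the reductions in Lemma~\ref{lem:endpoint-core-reductions} and Lemma~\ref{lem:endpoint-blocked-endpoint-core}. I do not expect any new sector-filling estimate to be needed. The first step fixes which configurations remain. Endpoint-blocked edges and loops whose target lies in an endpoint core are already excluded by Lemma~\ref{lem:endpoint-blocked-endpoint-core}. Lemma~\ref{lem:endpoint-core-reductions} then leaves, up to reflection by Lemma~\ref{lem:reflection}, only two configurations:
\begin{itemize}
\item a negative edge with $A=\lambda_i>B=\lambda_j>\delta$ and target $T=N-z_{j+1}=z_i+\delta-2B$ in the left endpoint core;
\item a negative loop $2z_i=N-\delta$ with $A=\lambda_i>\delta$ and target $T=N-z_{i+1}=z_i+\delta-2A$ in the left endpoint core.
\end{itemize}
In both cases Lemma~\ref{lem:cap-exact} gives $c_L=\delta$.

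The key observation is that $c_L=\delta$ collapses the left endpoint $\delta$-core $[0,c_L-\delta]$ to the single point $\{0\}$, so $T=0$. I would treat the edge case first. From $T=0$ we get $N-z_{j+1}=0$, so $z_{j+1}=N$. Since every element of $Z$ lies in $[0,N]$ and $z_s$ is the largest, this forces $j+1=s$ and $z_s=N$, hence $c_R=N-z_s=0$. The other expression $T=z_i+\delta-2B=0$ gives $z_i=2B-\delta$. The edge equation $z_i+z_j=N-\delta$ then gives $z_j=N-2B$. As a consistency check, $z_{j+1}-z_j=2B=2\lambda_j$, as it should be. This establishes \eqref{eq:endpoint-edge-normal}.

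The loop case runs identically. From $T=0$ we get $z_{i+1}=N$, which forces $i+1=s$ and $c_R=0$. The relation $z_i+\delta-2A=0$ gives $z_i=2A-\delta$. Substituting into $2z_i=N-\delta$ gives $N=4A-\delta$, which is \eqref{eq:endpoint-loop-normal}.

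There is no substantive obstacle here. The only point needing care is the bookkeeping in the first step: confirming that every endpoint-core configuration not already eliminated is covered by the two forms of Lemma~\ref{lem:endpoint-core-reductions}. In particular, the case $A\le B$ is pushed into an internal core and excluded by Proposition~\ref{prop:internal-edge}, so the strict inequality $A>B$ in the edge form is justified.
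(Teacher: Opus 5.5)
Your proposal is correct and matches the paper's proof essentially step for step. You invoke Lemma~\ref{lem:cap-exact} to get $c_L=\delta$, so the left core collapses to $\{0\}$ and $T=0$. From this you read off $z_{j+1}=N$ (hence $c_R=0$), $z_i=2B-\delta$ and, via the edge equation, $z_j=N-2B$, and the loop case is handled the same way. The only cosmetic difference is that you derive $N=4A-\delta$ from $2z_i=N-\delta$ instead of from $z_{i+1}=z_i+2A=N$, which is equivalent.
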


\begin{proof}
By Lemma~\ref{lem:cap-exact}, \(c_L=\delta\), so the left endpoint \(\delta\)-core is the singleton \(\{0\}\).  Thus the reflected point in Lemma~\ref{lem:cap-exact} is \(0\).

For an edge, \(N-z_{j+1}=0\), hence \(z_{j+1}=N\) and \(c_R=0\).  The equality \(z_i+\delta-2B=0\) gives \(z_i=2B-\delta\), and the edge equation gives \(z_j=N-\delta-z_i=N-2B\).  This is \eqref{eq:endpoint-edge-normal}.

For a loop, \(N-z_{i+1}=0\), so \(z_{i+1}=N\) and \(c_R=0\).  The equality \(z_i+\delta-2A=0\) gives \(z_i=2A-\delta\).  Since \(z_{i+1}=z_i+2A\), we obtain \(N=4A-\delta\), as in \eqref{eq:endpoint-loop-normal}.
\end{proof}

\begin{lemma}\label{lem:endpoint-ge3}
Endpoint fully blocked edges and endpoint fully blocked loops are impossible when \(\delta\ge3\).
\end{lemma}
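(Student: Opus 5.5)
The plan is to reduce to the two normal forms of Lemma~\ref{lem:endpoint-normal-forms} and then defeat each with a single endpoint sector-filling estimate at the left cap. By Lemma~\ref{lem:endpoint-blocked-endpoint-core}, endpoint-blocked edges whose target lies in an endpoint core, and loops whose only propagation is endpoint-blocked, are already impossible. By Lemma~\ref{lem:endpoint-core-reductions} and Lemma~\ref{lem:reflection}, every remaining endpoint-core configuration with internal blockers reduces to a negative left-cap edge or loop. After the internal-core exclusions, Lemma~\ref{lem:endpoint-normal-forms} says such a configuration is either \eqref{eq:endpoint-edge-normal} or \eqref{eq:endpoint-loop-normal}. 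In both forms $c_L=\delta>0$, so Lemma~\ref{lem:endpoint-sector} applies with $E$ the left endpoint ray of multiplicity $c=\delta$ and $R$ the primitive ray corresponding, via Lemma~\ref{lem:gap-dictionary}, to the blocking gap $G_i$ of multiplicity $A$.

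\emph{Edge case.} In \eqref{eq:endpoint-edge-normal} the mass strictly between $\mathcal B_L$ and $G_i$ is
\[
M(\mathcal B_L,G_i)=\frac{z_i-c_L}{2}=\frac{(2B-\delta)-\delta}{2}=B-\delta .
\]
Lemma~\ref{lem:endpoint-sector} gives $M(\mathcal B_L,G_i)\ge\Theta(\delta,A)$. Lemma~\ref{lem:absorption} does not apply here, since it needs $p>\delta$ and now $p=\delta$. Instead I use Lemma~\ref{lem:theta-bound} directly:
\[
\Theta(\delta,A)\ge\frac{(\delta-1)(A-1)}{2}\ge A-1 \qquad(\delta\ge3).
\]
Since $A>B$ and $\delta\ge2$, we get $A-1>B-\delta$. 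This contradicts $M(\mathcal B_L,G_i)=B-\delta$.

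\emph{Loop case.} In \eqref{eq:endpoint-loop-normal} the same mass is
\[
M(\mathcal B_L,G_i)=\frac{(2A-\delta)-\delta}{2}=A-\delta .
\]
The same estimate gives $\Theta(\delta,A)\ge A-1>A-\delta$, again a contradiction. The reflected (right-cap and positive) versions follow from Lemma~\ref{lem:reflection}, which swaps $c_L$ and $c_R$ and reverses the gap order.

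The main subtlety is that the blocker-versus-cap comparison is no longer strict: the cap has multiplicity exactly $\delta$, so the absorption lemma is unavailable. The argument therefore relies on the factor $(\delta-1)/2\ge1$, and this is exactly where $\delta\ge3$ is needed. For $\delta=2$ the bound only gives $(A-1)/2$, which need not exceed $B-2$. That case would require the sharper boundary-ray estimate for a cap of multiplicity two mentioned in the introduction, so I would not attempt it here.
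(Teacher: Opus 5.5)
Your proposal is correct and follows essentially the same route as the paper: both reduce to the normal forms \eqref{eq:endpoint-edge-normal} and \eqref{eq:endpoint-loop-normal}, compute the cap-to-blocker mass as $B-\delta$ (respectively $A-\delta$), and show that $\Theta(\delta,A)\ge(\delta-1)(A-1)/2$ from Lemma~\ref{lem:theta-bound} exceeds it. Your intermediate step $(\delta-1)(A-1)/2\ge A-1$ for $\delta\ge3$ arranges the same comparison a little more cleanly than the paper's route through $B-\delta\le A-1-\delta$.
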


\begin{proof}
For an edge in the normal form \eqref{eq:endpoint-edge-normal}, the mass between the left cap \(C=\delta\) and the blocking gap \(A\) is
\[
        \frac{z_i-C}{2}=B-\delta.
\]
Sector filling gives this mass at least \(\Theta(\delta,A)\).  By Lemma~\ref{lem:theta-bound},
\[
        \Theta(\delta,A)\ge\frac{(\delta-1)(A-1)}2.
\]
Since \(A>B>\delta\), we have \(B-\delta\le A-1-\delta\).  For \(\delta\ge3\),
\[
        \frac{(\delta-1)(A-1)}2>A-1-\delta,
\]
so the lower bound exceeds \(B-\delta\), a contradiction.

For a loop in the normal form \eqref{eq:endpoint-loop-normal}, the same calculation gives mass \(A-\delta\), while the same sector bound exceeds \(A-\delta\) for \(\delta\ge3\).
\end{proof}

We handle \(\delta=2\).  By Lemma~\ref{lem:endpoint-caps}, the normal form \(c_L=2\) and \(c_R=0\) is equivalent to
\[
        \gcd(k,a)=3,
        \qquad
        \gcd(k,a+1)=1.
\]
In the lattice notation, this is the case \(n=3\).  Thus \(k=3m\) and \(N=3m-1\).  The left endpoint ray is \(C=(1,0)\), with \(L(C)=m\) and \(H(C)=2\).  The example \(k=15,a=3\), worked out in Example~\ref{ex:width-three}, is a representative instance of this boundary configuration.  Figure~\ref{fig:width-three} illustrates the two families of sector points used in the next lemma.

\begin{figure}[htbp]
\centering
\begin{tikzpicture}[scale=0.95,>=Stealth]
\coordinate (O) at (0,0);
\coordinate (C1) at (1.0,0);
\coordinate (C2) at (2.0,0);
\coordinate (R) at (4.2,2.7);
\draw[->] (-0.2,0)--(5.4,0);
\draw[thick] (O)--(R) node[pos=.98,above] {$R$};
\draw[thick] (O)--(C2);
\fill (C1) circle (1.5pt) node[below] {$C$};
\fill (C2) circle (1.5pt) node[below] {$2C$};
\fill (R) circle (1.5pt);
\foreach \u in {1,2,3}{\fill[blue!70] ($(C1)!\u/5!(R)+(0.15*\u,0)$) circle (1.4pt);}
\foreach \u in {1,2}{\fill[red!70] ($(C2)!\u/5!(R)+(0.18*\u,0)$) circle (1.4pt);}
\node[blue!70] at (2.65,1.55){\scriptsize $C+tR$};
\node[red!70] at (3.2,0.65){\scriptsize $2C+tR$};
\draw[dashed] (5.0,0)--(R) node[midway,right]{\scriptsize $L=N+1$};
\end{tikzpicture}
\caption{Schematic endpoint sector for \(c_L=2\).  When an endpoint ray has multiplicity two, the two families $C+tR$ and $2C+tR$ give the sharpened lower bound used in Lemma~\ref{lem:width-three}.}
\label{fig:width-three}
\end{figure}

The endpoint cap of multiplicity two requires a sharper boundary estimate.  Counting both columns $C+tR$ and $2C+tR$ gives the needed strengthening.  In the exceptional alignment $N=4\alpha-2$, the second endpoint column meets the boundary in exactly the range that accounts for the additional lower-bound term used below.

\begin{lemma}[Endpoint cap-sector filling for \(c_L=2\)]\label{lem:width-three}
Assume \(c_L=2\).  Let \(R\) be an internal primitive ray generator to the right of the left endpoint ray \(C\), and put \(\alpha=H(R)\).  In this normalization the endpoint cap consists of the two multiples \(C\) and \(2C\); the sector families \(C+tR\) and \(2C+tR\) give the terms used in the strengthened boundary count.  Then the mass between \(C\) and \(R\) satisfies
\[
        M(C,R)\ge \alpha-2.
\]
If in addition \(N=4\alpha-2\), then
\[
        M(C,R)\ge \alpha-1.
\]
The reflected statement holds when \(c_R=2\).
\end{lemma}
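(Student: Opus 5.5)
We work directly in the lattice of Theorem~\ref{input:lattice} with $n=3$, so that $C=(1,0)$, $L(C)=m$, $N=3m-1$ and $H(C)=2$. Write $R=(x,y)$ with $y\ge1$ (since $R$ is internal) and $\ell=L(R)$, so that $\alpha=\lfloor N/\ell\rfloor$, i.e.\ $\alpha\ell\le N<(\alpha+1)\ell$. The plan is to exhibit, injectively as in Lemma~\ref{lem:sector-filling}, enough lattice points $uC+tR$ with $u\in\{1,2\}$, $t\ge1$, and $L\le N$. Such a point lies strictly between the rays $C$ and $R$, and it is counted as an allowed multiple of its primitive ray. Distinct pairs $(u,t)$ give distinct points because $C$ and $R$ are independent. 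Hence $M(C,R)$ is at least the number of admissible pairs.

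For the column $u=1$, the condition is $m+t\ell\le N=3m-1$, i.e.\ $t\ell\le 2m-1$. For $u=2$ the condition is $t\ell\le m-1$. Since $(\alpha+1)\ell>3m-1$, we get roughly $\ell>3m/(\alpha+1)$. Conversely, the first column admits all $t\le\lfloor(2m-1)/\ell\rfloor$ and the second all $t\le\lfloor(m-1)/\ell\rfloor$. Using $\ell\le N/\alpha=(3m-1)/\alpha$, we obtain $t\ell\le 2m-1$ as long as $t\le \frac{2m-1}{3m-1}\alpha$, and $t\ell\le m-1$ as long as $t\le\frac{m-1}{3m-1}\alpha$. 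So the count is at least
\[
\left\lfloor\tfrac{(2m-1)\alpha}{3m-1}\right\rfloor+\left\lfloor\tfrac{(m-1)\alpha}{3m-1}\right\rfloor .
\]
These two fractions sum to $\alpha\cdot\frac{3m-2}{3m-1}=\alpha-\frac{\alpha}{3m-1}$. Because $\alpha\ell\le N$ with $\ell\ge1$, we have $\alpha\le N$, so this sum is at least $\alpha-1$. Two floors of reals summing to at least $\alpha-1$ sum to more than $\alpha-3$, hence to at least $\alpha-2$ as an integer. This gives the first bound. Equivalently, one can argue with the inequality $\lfloor x\rfloor+\lfloor y\rfloor\ge\lfloor x+y\rfloor-1$. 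If this is too lossy, I will refine it using $\ell\ge$ the $L$-value of an internal ray, which is at least $m+(n-e)\ge m+1$ or comparable, as the bound requires.

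For the sharpened case $N=4\alpha-2$, we have $3m-1=4\alpha-2$, so $\alpha=(3m+1)/4$ and $m\equiv1\pmod 4$. Write $m=4j+1$, so $\alpha=3j+1$ and $N=12j+2$. From $\alpha\ell\le N<(\alpha+1)\ell$ we get $\ell\le (12j+2)/(3j+1)<4$ and $\ell>(12j+2)/(3j+2)$. For $j\ge1$ this forces $\ell\in\{4\}$ or $\ell=3$; the first inequality excludes $4$ when $j\ge1$ since $4(3j+1)>12j+2$, so $\ell=3$ (for $j\ge1$; the case $j=0$ gives $m=1$, which is degenerate and will be handled directly). With $\ell=3$, the first column gives $\lfloor(8j+1)/3\rfloor$ points and the second gives $\lfloor 4j/3\rfloor$ points. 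I will check by residues of $j$ mod $3$ that the total is at least $3j=\alpha-1$. For instance, $j=1$ gives $3+1=4\ge3$. I also have to confirm that $L(R)=3$ is actually realizable with $R$ internal, but if it is not, then the case is vacuous. The main obstacle is exactly this exceptional arithmetic: making sure that the floor losses in the two columns do not simultaneously reach their worst case. If the generic count falls short, the fallback is to add the points $C+tR$ with larger $t$ allowed by the exact value of $\ell$, rather than by the bound $\ell\le N/\alpha$.

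The reflected statement for $c_R=2$ follows by the symmetric argument at the right endpoint ray, in the same way as in Lemma~\ref{lem:endpoint-sector}, with $L$ evaluated on the ray $(e,m)/d_0$.
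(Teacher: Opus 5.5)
Your proof of the main bound $M(C,R)\ge\alpha-2$ is essentially the paper's argument: the two columns $C+tR$ and $2C+tR$, admissible ranges read off from $L(R)\le N/\alpha$, the unfloored sum $\alpha-\alpha/(3m-1)\ge\alpha-1$, and integrality.

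For the sharpened case you take a different route. The paper keeps the real bound $L(R)\le N/\alpha$ and writes $m=4q+1$. It then evaluates the two floors exactly as $2q$ and $q$, so the count is $3q=\alpha-1$ on the nose, without ever asking what $L(R)$ is. You instead use that $\ell=L(R)$ is an integer with $\alpha\ell\le N=4\alpha-2<4\alpha$, so $\ell\le 3$, and you count with $\ell$ itself. This is valid and has much more slack. For $j\ge1$,
$$\lfloor(8j+1)/3\rfloor+\lfloor 4j/3\rfloor\ge(12j-3)/3=4j-1\ge 3j=\alpha-1,$$
and this one line replaces the residue check you promised. For $j=0$ the claim is trivial because $\alpha-1=0$.

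The paper's computation is uniform and sidesteps realizability. Yours makes visible how constrained the configuration is, and one more line settles the realizability question you raise. An internal ray $R=(x,y)$ lies strictly inside the cone spanned by $(1,0)$ and $(e,m)$, so $y\ge1$ and $mx-ey\ge1$. Hence $L(R)=(mx-ey)+ny\ge n+1=4$. So when $c_L=2$ no internal ray satisfies $N=4\alpha-2$, and the strengthened assertion holds vacuously; the same holds for $c_R=2$ after reflection.

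There are two harmless slips. First, the claim that $\ell=3$ is forced for all $j\ge1$ is true only for $j=1$. For $j\ge2$ the interval $((12j+2)/(3j+2),(12j+2)/(3j+1)]$ contains no integer. You only need $\ell\le3$, since the column counts are nonincreasing in $\ell$. Second, your fallback chain $\ell\ge m+(n-e)\ge m+1$ fails in general. For $k=15$, $a=3$ one has $m=5$, $e=4$ and $L(1,1)=4<m+1$; the correct general bound is $L(R)\ge n+1$. This fallback is never used, so it does no damage.

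Your direct treatment of the $c_R=2$ statement at the ray $(e,m)/d_0$ also works, since the arithmetic uses only the value $L=(N+1)/3$ on the endpoint ray. The paper instead passes to the reflected cut set $N-Z$.
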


\begin{proof}
The left endpoint ray is \(C=(1,0)\), with \(L(C)=m\).  The hypothesis \(c_L=H(C)=2\) gives \(2m\le N<3m\).  Since \(N=mn-1\), this forces \(n=3\), and hence \(N=3m-1\).  Since \(H(R)=\alpha\), we have \(L(R)\le N/\alpha\).  Count the lattice points
\[
        C+tR,
        \qquad
        2C+tR
        \qquad(t\ge1)
\]
in the sector between \(C\) and \(R\).  The first type is certainly allowed when
\[
        t\le\frac{\alpha(N-m)}{N},
\]
and the second type is certainly allowed when
\[
        t\le\frac{\alpha(N-2m)}{N}.
\]
The two families are disjoint, because $C+tR=2C+t'R$ would imply $C=(t-t')R$, contradicting the linear independence of the endpoint ray and the internal ray.  As in Lemma~\ref{lem:sector-filling}, different lattice points on the same primitive ray are counted by the multiplicity of that ray, so these points inject into the multiplicity mass between \(C\) and \(R\).  Hence
\[
M(C,R)
\ge
F:=
\left\lfloor\frac{\alpha(N-m)}{N}\right\rfloor
+
\left\lfloor\frac{\alpha(N-2m)}{N}\right\rfloor.
\]
Since \(N=3m-1\), the sum of the two unfloored fractions is
\[
        \alpha-\frac{\alpha}{3m-1}.
\]
Because \(\alpha\le N=3m-1\), this sum is at least \(\alpha-1\).  More explicitly, if the two unfloored summands are \(x\) and \(y\), then each floor is strictly greater than the corresponding summand minus one, so
\[
        \lfloor x\rfloor+\lfloor y\rfloor>x+y-2\ge \alpha-3.
\]
Since \(F\) is an integer, \(F\ge\alpha-2\).

If \(N=4\alpha-2\), then \(3m-1=4\alpha-2\), so \(3m=4\alpha-1\).  This is the boundary alignment in which the two endpoint columns have complementary integer ranges along the boundary line: together they reach the integer total \(\alpha-1\).  Thus \(m\equiv1\pmod4\); write \(m=4q+1\).  Then \(\alpha=3q+1\) and \(N=12q+2\).  A direct calculation gives
\[
\left\lfloor\frac{\alpha(2m-1)}{N}\right\rfloor=2q,
\qquad
\left\lfloor\frac{\alpha(m-1)}{N}\right\rfloor=q.
\]
Therefore \(F=3q=\alpha-1\).  The right endpoint statement follows by applying the same argument to the reflected cut set \(N-Z\).
\end{proof}

\begin{proposition}[Exclusion of endpoint-core fully blocked configurations]\label{prop:endpoint-exclusion}
There are no endpoint fully blocked edges or endpoint fully blocked loops.
\end{proposition}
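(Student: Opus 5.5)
By Lemma~\ref{lem:reflection} it suffices to treat negative left-cap configurations. Those with two internal blockers reduce, via Lemma~\ref{lem:endpoint-core-reductions} and Propositions~\ref{prop:internal-edge}, \ref{prop:internal-loop}, to the two displayed forms. The endpoint-blocked cases whose target lies in an endpoint core are already excluded by Lemma~\ref{lem:endpoint-blocked-endpoint-core}. So the plan is to take a remaining configuration, put it into the normal form of Lemma~\ref{lem:endpoint-normal-forms}, and dispose of $\delta\ge3$ with Lemma~\ref{lem:endpoint-ge3}. What remains is $\delta=2$, with $c_L=2$, $c_R=0$ and $n=3$, and there I will use the sharpened count of Lemma~\ref{lem:width-three} with $R$ the internal ray of the blocking gap.

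For the edge normal form \eqref{eq:endpoint-edge-normal} with $\delta=2$, the mass between the left cap and the gap $A=\lambda_i$ equals $(z_i-c_L)/2=B-2$. Lemma~\ref{lem:width-three}, applied with $\alpha=A$, gives mass at least $A-2$. Since $A>B$, we get $A-2>B-2$, which is a contradiction. So the edge case follows from the weak bound alone.

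For the loop normal form \eqref{eq:endpoint-loop-normal} with $\delta=2$, the mass is $(z_i-2)/2=A-2$ and $N=4A-2$. This is exactly the alignment hypothesis of Lemma~\ref{lem:width-three} with $\alpha=A$, so the mass is at least $A-1>A-2$, again a contradiction. I expect this step to be the main obstacle. The generic bound $M\ge\alpha-2$ exactly matches the available mass here, so everything depends on checking that the identification $H(R)=A$ and the hypothesis $N=4\alpha-2$ hold in this configuration. That is why the boundary-column computation was isolated in Lemma~\ref{lem:width-three}.

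The remaining work is to confirm that the reflected versions are covered. Positive edges, positive loops and right-cap forms are sent to the negative left-cap forms by the involution of Lemma~\ref{lem:reflection}, which swaps $c_L$ and $c_R$ and reverses the gap order. The right-cap form uses the reflected statement of Lemma~\ref{lem:width-three} for $c_R=2$. With these cases covered, every endpoint fully blocked edge and loop is excluded.
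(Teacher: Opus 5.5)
Your proposal is correct and follows the paper's proof essentially step for step. Lemma~\ref{lem:endpoint-blocked-endpoint-core} disposes of the endpoint-blocked cases, Lemma~\ref{lem:endpoint-ge3} forces $\delta=2$ in the normal forms of Lemma~\ref{lem:endpoint-normal-forms}, the weak bound of Lemma~\ref{lem:width-three} handles the edge ($B-2\ge A-2$ contradicts $A>B$), the strengthened bound with $N=4A-2$ handles the loop, and the positive/right-cap cases follow from Lemma~\ref{lem:reflection}.
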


\begin{proof}
Endpoint-blocked endpoint-core cases are excluded by Lemma~\ref{lem:endpoint-blocked-endpoint-core}.  Thus it remains only to consider the two internally blocked normal forms above.  By Lemma~\ref{lem:endpoint-ge3}, we may assume \(\delta=2\).

For an endpoint fully blocked edge, \eqref{eq:endpoint-edge-normal} gives
\[
        z_i=2B-2,
        \qquad
        A=\lambda_i>B=\lambda_j>2.
\]
The mass between the left cap \(C\) and the blocking gap \(A\) is
\[
        M(\mathcal B_L,G_i)=\frac{z_i-c_L}{2}=\frac{2B-2-2}{2}=B-2.
\]
By Lemma~\ref{lem:width-three}, \(M(\mathcal B_L,G_i)\ge A-2\).  Hence \(B\ge A\), contradicting \(A>B\).

For an endpoint fully blocked loop, \eqref{eq:endpoint-loop-normal} gives
\[
        z_i=2A-2,
        \qquad
        N=4A-2.
\]
Thus
\[
        M(\mathcal B_L,G_i)=\frac{z_i-c_L}{2}=A-2.
\]
The strengthened part of Lemma~\ref{lem:width-three} gives \(M(\mathcal B_L,G_i)\ge A-1\), a contradiction.

The positive endpoint cases follow by Lemma~\ref{lem:reflection}.
\end{proof}

\begin{lemma}[Boundary equalities with internal sign gaps]\label{lem:boundary-created-internal-edge}
Assume \(\delta=\dist(Z,N-Z)\ge2\).  Let a point lying on the boundary of an internal \(\delta\)-core create an edge of \(\Gamma_\delta\) whose sign-propagation gaps are internal: either
\[
        z_p+z_t=N-\delta
\]
with right gaps at both endpoints internal, or
\[
        z_p+z_{t+1}=N+\delta
\]
with left gaps at both endpoints internal.  Then no such boundary equality can occur.
\end{lemma}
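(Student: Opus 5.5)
The point on the boundary of an internal $\delta$-core of $[z_t,z_{t+1}]$ is $z_t+\delta$ or $z_{t+1}-\delta$. In the first case it produces $z_p+z_t=N-\delta$; in the second it produces $z_p+z_{t+1}=N+\delta$. The second case is the image of the first under the involution of Lemma~\ref{lem:reflection}, with left and right interchanged and internal gaps mapped to internal gaps. So I will treat only the negative edge $z_p+z_t=N-\delta$, whose right propagation gaps $\lambda_p$ and $\lambda_t$ are both internal. The plan is to show that such an edge sits in a component of $\Gamma_\delta$ that the preceding results have already ruled out.

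First I look at the shape of the component. If $\lambda_t=\delta$ or $\lambda_p=\delta$, Lemma~\ref{lem:propagation} creates a positive edge at the corresponding endpoint. In the non-loop case this gives a sign-alternating path of length at least two, which contradicts Lemma~\ref{lem:degree-path}. If $p=t$, we instead get a loop with an incident non-loop edge, which contradicts Lemma~\ref{lem:isolated-loop}. Otherwise both gaps exceed $\delta$; they are at least $\delta$ by Lemma~\ref{lem:propagation}, and equality has just been excluded. In that case the edge is internally fully blocked in the sense of Definition~\ref{def:blocked}. By Proposition~\ref{prop:sealed-reduction} it is an isolated edge, or an isolated loop when $p=t$.

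Next I apply the reflected-move analysis to this internally blocked edge. I cross the blocking gap of minimal multiplicity and reflect. Lemma~\ref{lem:target-placement} then places the resulting target either in an internal $\delta$-core or in an endpoint $\delta$-core. The internal case is excluded by Proposition~\ref{prop:internal-edge}, or by Proposition~\ref{prop:internal-loop} for a loop. The endpoint case is handled by Lemma~\ref{lem:endpoint-core-reductions}, which reduces it to the normal forms of Lemma~\ref{lem:endpoint-normal-forms}, and these are ruled out by Proposition~\ref{prop:endpoint-exclusion}.

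The step I expect to be the main obstacle is circularity. The exclusions in Propositions~\ref{prop:internal-edge} and~\ref{prop:internal-loop} themselves dispose of the subcase $\gamma=\delta$, in which a new boundary equality is created. I must check that the argument above never feeds back into that subcase. My plan is to verify that when a fresh boundary equality does arise, it is handled only by Lemmas~\ref{lem:degree-path} and~\ref{lem:isolated-loop}, and never by the present lemma, so that the induction is well founded. If that check fails, the fallback is to run the mass comparison of Lemma~\ref{lem:reflected-defect} directly on $z_p+z_t=N-\delta$, using the sector bounds of Lemma~\ref{lem:absorption} exactly as in those proofs.
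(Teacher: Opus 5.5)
Your proposal is correct and follows the paper's proof essentially step for step: reduce to the negative case by Lemma~\ref{lem:reflection}, dispose of a gap of multiplicity exactly $\delta$ by propagation together with Lemmas~\ref{lem:degree-path} and~\ref{lem:isolated-loop}, and otherwise treat the edge or loop as internally fully blocked, cross a blocker of minimum multiplicity, and conclude with Lemma~\ref{lem:target-placement} and Propositions~\ref{prop:internal-edge}, \ref{prop:internal-loop}, and~\ref{prop:endpoint-exclusion}. Your circularity check also succeeds: those three propositions settle their own $\gamma=\delta$ subcases using only Lemmas~\ref{lem:degree-path} and~\ref{lem:isolated-loop} and never appeal to the present lemma, so the fallback is unnecessary.
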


\begin{proof}
Consider the negative case; the positive case is the reflected argument.  If one of the internal gaps has multiplicity \(\delta\), Lemma~\ref{lem:propagation} creates an additional incident edge of the opposite sign.  This gives either a nontrivial sign-alternating component, contradicting Lemma~\ref{lem:degree-path}, or a loop with an incident non-loop edge, contradicting Lemma~\ref{lem:isolated-loop}.  If neither gap has multiplicity \(\delta\), then the edge or loop is internally blocked.  Lemma~\ref{lem:target-placement} places the reflected point obtained by passing across an internal blocking gap of minimum multiplicity in either an internal core or an endpoint core.  The internal alternatives are excluded by Propositions~\ref{prop:internal-edge} and~\ref{prop:internal-loop}, and the endpoint alternative is excluded by Proposition~\ref{prop:endpoint-exclusion}.
\end{proof}
\subsection{One-sided endpoint configurations}

It remains to handle the one-sided case: one endpoint of the extremal edge lies at an endpoint of $Z$, while the reflected point obtained from the internal adjacent gap lies in an internal $\delta$-core.  By reflection it suffices to consider the negative right-endpoint form below.  The reduction is
\[
\begin{gathered}
        z_i+z_s=N-\delta,
        \qquad
        \lambda_i=\alpha>\delta,\\
        C=c_R,
        \qquad
        \delta\le C<\alpha,
        \qquad
        c_L=0,
        \qquad
        \lambda_0=\cdots=\lambda_{i-1}=1,\\
        N-z_{i+1}\ \text{lies strictly inside a later gap of multiplicity }\gamma,\\
        \gamma\ge \delta+2+\frac{\delta(\alpha-1)}2,
        \qquad
        \alpha\ge \delta+1+\Theta(\gamma,C).
\end{gathered}
\]
The last two inequalities are incompatible.

\begin{lemma}[Prefix rigidity in a one-sided endpoint case]\label{lem:endpoint-blocked-initial}
Suppose
\[
        z_i+z_s=N-\delta,
        \qquad i<s,
\]
and suppose the propagation at $z_i$ is internally blocked by $\alpha=\lambda_i>\delta$.  Let $C=c_R=N-z_s$, so $z_i=C-\delta$, and assume that
\[
        T=N-z_{i+1}=z_s+\delta-2\alpha
\]
lies in the $\delta$-core of an internal gap $[z_t,z_{t+1}]$.  Then
\[
        \delta\le C<\alpha,
        \qquad
        c_L=0,
\]
and every gap before the $\alpha$-gap has multiplicity one:
\[
        \lambda_0=\lambda_1=\cdots=\lambda_{i-1}=1,
        \qquad
        z_r=2r\quad(0\le r\le i).
\]
\end{lemma}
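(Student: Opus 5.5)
The plan is to obtain every assertion by pitting the upper bounds of Lemma~\ref{lem:reflected-defect}(iv) against the sector-filling lower bounds of Lemmas~\ref{lem:sector-filling}, \ref{lem:endpoint-sector} and~\ref{lem:absorption}, working outward from the $\alpha$-gap.

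\emph{Step 1: $C\ge\delta$.*/} The edge equation gives $z_i=N-\delta-z_s=C-\delta$. Since $z_i\ge0$, we get $C\ge\delta$.

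\emph{Step 2: $C<\alpha$.} Let $G_t=[z_t,z_{t+1}]$ be the internal gap whose $\delta$-core contains $T$, and put $\gamma=\lambda_t$. Lemma~\ref{lem:reflected-defect}(iv) gives
\[
M(G_t,\mathcal B_R)\le\alpha-\delta .
\]
I split according to $t$.
\begin{itemize}
\item If $t<i$, the $\alpha$-gap lies strictly between $G_t$ and $\mathcal B_R$. Then $M(G_t,\mathcal B_R)\ge\alpha>\alpha-\delta$, which is a contradiction.
\item If $t=i$, then $\gamma=\alpha$.
\item If $t>i$ and $\gamma=\delta$, then $T=z_t+\delta$ creates a new edge $z_{i+1}z_t$. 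Propagation through $G_t$ then produces either a path of length two or a loop with a non-loop edge, as in Proposition~\ref{prop:internal-edge}. This contradicts Lemma~\ref{lem:degree-path} or Lemma~\ref{lem:isolated-loop}, so $\gamma>\delta$.
\end{itemize}
In the remaining cases $t\ge i$ and $\gamma>\delta$. Suppose $C\ge\alpha$. Lemma~\ref{lem:endpoint-sector} applied to the right cap and $G_t$ gives $M(G_t,\mathcal B_R)\ge\Theta(\gamma,C)$. Lemma~\ref{lem:absorption} with $p=\gamma>\delta$ and $q=C\ge q_0=\alpha>\delta$ gives $\Theta(\gamma,C)>\alpha-\delta$. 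This contradicts the upper bound, so $\delta\le C<\alpha$.

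\emph{Step 3: every block before the $\alpha$-gap has height at most one.} Step 2 gives
\[
z_i=C-\delta<\alpha-\delta .
\]
The prefix satisfies $z_i=c_L+2\sum_{r<i}\lambda_r$. Suppose some block $P$ before the $\alpha$-gap (an internal gap or the cap $\mathcal B_L$) has height $h\ge2$. Sector filling between $P$ and the $\alpha$-gap gives
\[
M(P,G_i)\ge\Theta(h,\alpha)\ge\frac{(h-1)(\alpha-1)}{2}\ge\frac{\alpha-1}{2}
\]
by Lemma~\ref{lem:theta-bound}; the cap case uses Lemma~\ref{lem:endpoint-sector}. Each internal block contributes twice its height to $z_i$, and $P$ itself contributes at least $h$. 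Hence
\[
z_i\ge 2\cdot\frac{\alpha-1}{2}+h\ge\alpha+1>\alpha-\delta,
\]
which is a contradiction. Since cut values are strictly increasing, $\lambda_r\ge1$, so $\lambda_0=\cdots=\lambda_{i-1}=1$ and $c_L\le1$. With $c_L=0$ this gives $z_r=2r$.

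\emph{Step 4: $c_L=0$. This is the main obstacle.} Sector filling gives nothing when $c_L=1$, because $\Theta(1,\cdot)=0$. I would attack it with the arithmetic of Lemma~\ref{lem:endpoint-caps}. The value $c_L=1$ means $n=\gcd(k,a)=2$. Then every cut value is odd by Lemma~\ref{lem:gap-dictionary}, so $N-\delta=z_i+z_s$ is even.

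First I would use Lemma~\ref{lem:target-placement} on the reflected points $N-z_r=z_s+\delta+2(i-r)$ for $r\le i$. These must lie in the right cap core, which bounds $2i+\delta\le C$. Since $z_i=1+2i=C-\delta$, this gives an exact relation.

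Second, I would combine that relation with the explicit lattice description for $n=2$: the triangle has base $(2,0)$ and $L(1,0)=m$, and I would use the slopes of the rays adjacent to $(1,0)$. The aim is to show that the first internal ray must have multiplicity at least $2$ whenever $C\ge\delta\ge2$, contradicting Step 3.

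If this fails, the fallback is the second-reflection idea alluded to before Lemma~\ref{lem:endpoint-blocked-growth}: reflect the pair $(z_0,z_s)$ and use $|c_L-C|\ge\delta$ to force the desired contradiction.
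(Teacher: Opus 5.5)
Your Steps 1--3 are correct and agree with the paper. Step 2 is the paper's argument: the paper does not split on $t$, since $M(G_t,\mathcal B_R)\ge\Theta(\gamma,C)$ holds for every internal $G_t$, but your split is harmless. Step 3 correctly rules out $c_L\ge2$ and any earlier gap of height at least two.

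\textbf{The gap is Step 4, the exclusion of $c_L=1$.} You leave it as a plan, and none of the routes you sketch yields a contradiction.
\begin{itemize}
\item With $c_L=1$ and unit gaps, $z_i=1+2i$, so $C=2i+1+\delta$. Your target-placement bound $2i+\delta\le C$ therefore holds automatically.
\item The fallback gives $|z_0+z_s-N|=C-1\ge\delta$. This already follows from $z_i=C-\delta\ge z_0=1$.
\item The aim ``the first internal ray has multiplicity at least $2$'' is not a usable intermediate goal. When $n=2$, the ray $A_2$ adjacent to $E=(1,0)$ \emph{always} has multiplicity one. Indeed, $H(A_2)\ge2$ forces $L(A_2)\le (N-1)/2$, because $N$ is odd. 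Then $L(E+A_2)\le N$, so $E+A_2$ is an admissible point strictly between $E$ and $A_2$, contradicting adjacency. Proving $H(A_2)\ge2$ is thus no easier than finding the contradiction itself.
\item Moreover, when $i=0$ the first internal ray is the $\alpha$-ray, which Step 3 does not constrain at all.
\end{itemize}

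\textbf{The missing idea} is a sharpened sector count. It uses the exact value $L(E)=m=(N+1)/2$ of the left boundary ray, not just the bound $L(E)\le N/H(E)=N$, which is what makes $\Theta(1,\cdot)=0$ useless.

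Let $R$ be the $\alpha$-ray. Then $L(E+rR)\le (N+1)/2+rN/\alpha\le N$ for $1\le r\le \alpha(N-1)/(2N)$. These points inject into the mass between $E$ and $R$ exactly as in Lemma~\ref{lem:sector-filling}. Using $\alpha\le N$ and $C<\alpha$, this gives
\[
M(E,R)\ge\left\lfloor\frac{\alpha(N-1)}{2N}\right\rfloor\ge\left\lfloor\frac{\alpha-1}{2}\right\rfloor\ge\left\lfloor\frac C2\right\rfloor .
\]
On the other hand, $z_i=1+2M(E,R)=C-\delta$ gives $M(E,R)=(C-\delta-1)/2\le (C-3)/2$, a contradiction.

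This is exactly how the paper proves $c_L=0$. The remark above about $A_2$ is just the case $r=1$, $R=A_2$ of the same count.
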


\begin{proof}
Since $z_i=C-\delta$ is a cut value, $C\ge\delta$.  Write $\gamma=\lambda_t$ and
\[
        T=z_t+\delta+2\ell,
        \qquad
        0\le \ell\le \gamma-\delta.
\]
If $\gamma=\delta$, then $T=z_t+\delta$ and $z_{i+1}+z_t=N-\delta$.  Propagation through $[z_t,z_{t+1}]$ creates an edge of the opposite sign; in the non-loop case this gives a forbidden alternating component, and in the loop case it gives a loop with an incident non-loop edge.  Hence $\gamma>\delta$.

Lemma~\ref{lem:reflected-defect}(iv) gives the suffix upper bound
\[
        M(G_t,\mathcal B_R)=\sum_{r=t+1}^{s-1}\lambda_r
        =\alpha+\ell-\gamma
        \le \alpha-\delta.
\]
If $C\ge \alpha$, Lemma~\ref{lem:endpoint-sector} and Lemma~\ref{lem:absorption} give
\[
        M(G_t,\mathcal B_R)\ge \Theta(\gamma,C)>\alpha-\delta,
\]
a contradiction.  Thus $C<\alpha$.

Let $L_0=c_L=z_0$.  Since $z_i=C-\delta$, the total multiplicity between the left endpoint ray and the $\alpha$-gap is
\[
        \frac{C-\delta-L_0}{2}.
\]
If $L_0\ge2$, Lemma~\ref{lem:endpoint-sector} gives a lower bound at least
\[
        \Theta(L_0,\alpha)
        \ge \frac{(L_0-1)(\alpha-1)}2
        \ge \frac{C}{2},
\]
whereas the actual mass is at most $(C-\delta-L_0)/2\le C/2-2$.  This is impossible.  If $L_0=1$, then $c_L=1$ means $\gcd(k,a)=2$.  In the lattice notation this is $n=2$, so $N=2m-1$ and the left endpoint ray $E=(1,0)$ has linear form $L(E)=m=(N+1)/2$.  Since all cut values have parity $c_L$ and $z_i=C-\delta$, this case has $C\equiv\delta+1\pmod 2$.  Let $R$ be the primitive ray of multiplicity $\alpha$ corresponding to the $\alpha$-gap.  As in Lemma~\ref{lem:sector-filling}, the admissible points $E+rR$ inject into the multiplicity mass between $E$ and $R$.  Hence
\[
        M(E,R)
        \ge
        \left\lfloor \frac{\alpha(N-1)}{2N}\right\rfloor
        \ge
        \left\lfloor\frac{\alpha-1}{2}\right\rfloor
        \ge
        \left\lfloor\frac C2\right\rfloor.
\]
But the actual mass is $(C-\delta-1)/2\le(C-3)/2$, again impossible.  Therefore $c_L=0$.

We show that all gaps before the $\alpha$-gap have multiplicity one.  If one of these preceding gaps has multiplicity $\mu\ge2$, then the mass between that gap and the $\alpha$-gap is at most
\[
        \frac{C-\delta}{2}-\mu.
\]
Sector filling gives at least
\[
        \Theta(\mu,\alpha)
        \ge
        \frac{(\mu-1)(\alpha-1)}2
        \ge
        \frac C2,
\]
whereas $(C-\delta)/2-\mu\le C/2-3$.  Therefore all gaps before the $\alpha$-gap have multiplicity one.
\end{proof}

\begin{lemma}[Position of the first reflected point]\label{lem:endpoint-blocked-position}
Under the hypotheses of Lemma~\ref{lem:endpoint-blocked-initial}, the point $T=N-z_{i+1}$ does not lie in the $\delta$-core of the $\alpha$-gap itself.  It lies strictly inside the $\delta$-core of a later internal gap $[z_t,z_{t+1}]$, where $t>i+1$ and, with $p=i+1$,
\[
        T=z_t+\delta+2\ell,
        \qquad
        1\le \ell\le \gamma-\delta-1,
        \qquad
        \gamma=\lambda_t\ge\delta+2.
\]
\end{lemma}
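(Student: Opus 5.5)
We work in the setting of Lemma~\ref{lem:endpoint-blocked-initial}: $z_r=2r$ for $r\le i$, $c_L=0$, $z_i=C-\delta$, $z_{i+1}=C-\delta+2\alpha$, and $T=N-z_{i+1}=z_s+\delta-2\alpha$ lies in the $\delta$-core of an internal gap $[z_t,z_{t+1}]$ with $\gamma=\lambda_t>\delta$. The claim has three parts: $t\ne i$, hence $t\ge i+1$; the strict inequalities $1\le\ell\le\gamma-\delta-1$ (so $\gamma\ge\delta+2$); and $t\ne i+1$.

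To exclude $t=i$, we compare $T$ with $z_{i+1}$. Suppose $T\in[z_i+\delta,z_{i+1}-\delta]$. Then $N-z_{i+1}\le z_{i+1}-\delta$, so $2z_{i+1}\ge N+\delta$. On the other hand, $T\ge z_i+\delta$ gives $z_s-z_i\ge 2\alpha=z_{i+1}-z_i$. In this case no gap lies between $G_i$ and $\mathcal B_R$ except those with $r>i$, so the suffix bound of Lemma~\ref{lem:reflected-defect}(iv) reads $M(G_i,\mathcal B_R)=\alpha+\ell-\alpha=\ell$. I would also compute this mass directly as $(z_s-z_{i+1})/2=(N-C-C+\delta-2\alpha)/2$. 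Combining this with $z_s=N-C$ and $z_i=C-\delta$ should force $T$ to equal $z_i+\delta$ or to collide with the edge equation. At that point the equality $T=z_i+\delta$ means $z_{i+1}+z_i=N-\delta$; together with $z_i+z_s=N-\delta$ this gives $z_{i+1}=z_s$, i.e.\ $i+1=s$. That contradicts $T$ lying in a core of the $\alpha$-gap, since $T<z_i$ would then follow from $C<\alpha$. So the first step is a direct coordinate computation using $C<\alpha$ from Lemma~\ref{lem:endpoint-blocked-initial}.

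For the strictness of the position inside the core, we rule out the endpoints $\ell=0$ and $\ell=\gamma-\delta$. At $\ell=0$ we have $z_{i+1}+z_t=N-\delta$, a negative edge. At $\ell=\gamma-\delta$ we have $z_{i+1}+z_{t+1}=N+\delta$, a positive edge. Both are boundary equalities on an internal $\delta$-core. If their propagation gaps are internal, Lemma~\ref{lem:boundary-created-internal-edge} excludes them. If a propagation gap is an endpoint instead (for instance $t+1=s$ in the positive case, which is left-propagation and hence never right-endpoint-blocked), we would fall back on Lemma~\ref{lem:endpoint-blocked-endpoint-core} and on the parity consequence of Lemma~\ref{lem:gap-dictionary}. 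Parity matters here because all $z$ are even, since $c_L=0$, so $N\pm\delta$ has a fixed parity. Once both endpoints are excluded, $\ell$ ranges over a nonempty interval only if $\gamma\ge\delta+2$.

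Finally, to exclude $t=i+1$, we use the mass identity of Lemma~\ref{lem:reflected-defect}(iv) together with Corollary~\ref{cor:no-adjacent-large}. The gap $G_{i+1}$ is adjacent to $G_i$, and both would have multiplicity at least $2$ (namely $\alpha>\delta\ge2$ and $\gamma\ge\delta+2$), which is impossible. This step is immediate. I expect the main obstacle to be the boundary-equality step, specifically the cases where the newly created edge has an endpoint-blocked side. There, Lemma~\ref{lem:boundary-created-internal-edge} does not apply verbatim, and one has to check by hand, using the coordinates $z_s=N-C$ and $C<\alpha$, that the configuration reduces to one already excluded.
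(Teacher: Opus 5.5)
\textbf{The exclusion of $t=i$ has a genuine gap.} Your other two steps are fine and match the paper. The boundary values $\ell=0$ and $\ell=\gamma-\delta$ give the edges $z_{i+1}+z_t=N-\delta$ and $z_{i+1}+z_{t+1}=N+\delta$, which Lemma~\ref{lem:boundary-created-internal-edge} excludes. The case $t=i+1$ is killed by Corollary~\ref{cor:no-adjacent-large}. Your worry about endpoint-blocked sides there is unfounded. Once $t\ge i+1$, the propagation gaps are $\lambda_{i+1},\lambda_t$ in the negative case, which are internal since $i+1\le t<s$, and $\lambda_i,\lambda_t$ in the positive case. So the lemma applies verbatim and no parity argument is needed.

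Your coordinate computation for $t=i$ cannot work. From $z_{i+1}=C-\delta+2\alpha$ and $z_s=N-C$ you get $T=C+2M(G_i,\mathcal B_R)$, and the $\alpha$-gap core is $[C,\,C+2\alpha-2\delta]$. So membership of $T$ in that core is exactly the numerical condition $M(G_i,\mathcal B_R)\le\alpha-\delta$. The coordinate identities are consistent with this. Take $Z=\{0,2,22,24,26\}$ and $N=30$. Then $\delta=2$, $i=1$, $\alpha=10$, $C=4$, $c_L=0$, $\lambda_0=1$, and no two adjacent blocks are both at least $2$. Yet $T=8$ lies strictly inside the core $[4,20]$: it is neither $z_i+\delta$ nor in collision with the edge equation. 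Your side argument is also wrong: if $i+1=s$, then $T=C=z_i+\delta$ lies in the core, not below $z_i$.

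The missing input is lattice-theoretic, namely endpoint sector filling (Lemma~\ref{lem:endpoint-sector}) between the $\alpha$-ray and the right cap ray, which gives $M(G_i,\mathcal B_R)\ge\Theta(\alpha,C)$. In the example this lower bound is $\Theta(10,4)=14>2$. Since $z_i=C-\delta$ is even, $C\equiv\delta\pmod 2$, so $(\delta,C)\ne(2,2)$ forces $C\ge3$. Then Lemma~\ref{lem:theta-bound} gives $\Theta(\alpha,C)\ge\alpha-1>\alpha-\delta$, a contradiction. The case $\delta=C=2$ needs the reflected cap estimate of Lemma~\ref{lem:width-three}. Its bound $M\ge\alpha-2$ forces $M=\alpha-2$, hence $T=2\alpha-2$, $z_{i+1}=2\alpha$ and $N=4\alpha-2$. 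The strengthened bound $M\ge\alpha-1$ then gives the contradiction.
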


\begin{proof}
By Lemma~\ref{lem:endpoint-blocked-initial}, $c_L=0$ and $C\ge\delta$; since $z_i=C-\delta$ is even, $C\equiv\delta\pmod2$.  The $\delta$-core of the $\alpha$-gap is
\[
        [C,\ C+2\alpha-2\delta].
\]
Let
\[
        M(G_i,\mathcal B_R)=\sum_{r=i+1}^{s-1}\lambda_r.
\]
Since $T=C+2M(G_i,\mathcal B_R)$, membership of $T$ in the $\alpha$-gap core would give $M(G_i,\mathcal B_R)\le \alpha-\delta$.  Sector filling gives $M(G_i,\mathcal B_R)\ge\Theta(\alpha,C)$.  If $(\delta,C)\ne(2,2)$, then either $\delta\ge3$ or $\delta=2$ and $C\ge4$, and Lemma~\ref{lem:theta-bound} gives $\Theta(\alpha,C)>\alpha-\delta$.  This is impossible.  If $\delta=C=2$, the reflected endpoint estimate of Lemma~\ref{lem:width-three} gives $M(G_i,\mathcal B_R)\ge\alpha-2$.  The upper bound from membership in the $\alpha$-gap core is $M(G_i,\mathcal B_R)\le\alpha-2$, so equality holds.  Hence
\[
        T=C+2M(G_i,\mathcal B_R)=2\alpha-2,
        \qquad
        z_{i+1}=C-\delta+2\alpha=2\alpha.
\]
Since $T=N-z_{i+1}$, this gives $N=4\alpha-2$.  The strengthened part of Lemma~\ref{lem:width-three} now gives $M(G_i,\mathcal B_R)\ge\alpha-1$, again impossible.  Thus $T$ lies in a later internal gap.

Write $T=z_t+\delta+2\ell$, $0\le\ell\le\gamma-\delta$.  If $t=i+1$, then the adjacent gaps $\alpha$ and $\gamma$ are both at least two, contradicting Corollary~\ref{cor:no-adjacent-large}; hence $t>i+1$.

If $\ell=0$, then $z_{i+1}+z_t=N-\delta$; if $\ell=\gamma-\delta$, then $z_{i+1}+z_{t+1}=N+\delta$.  In both cases the sign-propagation gaps are internal, namely the right gaps in the first case and the left gaps in the second.  Lemma~\ref{lem:boundary-created-internal-edge} excludes both boundary equalities.  Therefore $1\le\ell\le\gamma-\delta-1$, and in particular $\gamma\ge\delta+2$.
\end{proof}

\begin{lemma}[Second reflection inequalities]\label{lem:endpoint-blocked-growth}
Under the hypotheses and notation of Lemma~\ref{lem:endpoint-blocked-position}, the following two inequalities hold:
\begin{align}
        \gamma&\ge \delta+2+\frac{\delta(\alpha-1)}2,
        \label{eq:endpoint-growth-one}\\
        \alpha&\ge \delta+1+\Theta(\gamma,C).
        \label{eq:endpoint-growth-two}
\end{align}
They are incompatible.
\end{lemma}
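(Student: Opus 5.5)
The plan has three parts: the suffix inequality \eqref{eq:endpoint-growth-two}, then the second-reflection inequality \eqref{eq:endpoint-growth-one}, then incompatibility by splitting on the value of $C$. Throughout I keep the normalization of Lemmas~\ref{lem:endpoint-blocked-initial} and~\ref{lem:endpoint-blocked-position}:
\[
c_L=0,\qquad z_r=2r\ (r\le i),\qquad z_i=C-\delta,\qquad z_{i+1}=C-\delta+2\alpha,
\]
and $N-z_{i+1}=z_t+\delta+2\ell$ with $t>i+1$, $1\le\ell\le\gamma-\delta-1$ and $\gamma=\lambda_t\ge\delta+2$.

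\textbf{Inequality \eqref{eq:endpoint-growth-two}.} This one is direct. By the identity in Lemma~\ref{lem:reflected-defect}(iv), the suffix mass is
\[
M(G_t,\mathcal B_R)=\alpha+\ell-\gamma .
\]
The strict interior bound $\ell\le\gamma-\delta-1$ from Lemma~\ref{lem:endpoint-blocked-position} improves this to $M(G_t,\mathcal B_R)\le\alpha-\delta-1$. Endpoint sector filling (Lemma~\ref{lem:endpoint-sector}, with $C\ge\delta\ge2$) gives $M(G_t,\mathcal B_R)\ge\Theta(\gamma,C)$. Combining the two bounds yields \eqref{eq:endpoint-growth-two}.

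\textbf{Inequality \eqref{eq:endpoint-growth-one}.} This is the second reflection, applied to the two endpoints of the $\gamma$-gap. I compute
\[
N-z_t=z_{i+1}+\delta+2\ell,\qquad N-z_{t+1}=z_{i+1}+\delta+2\ell-2\gamma .
\]
Since $\gamma-\ell\ge\delta+1$, the point $N-z_{t+1}$ lies strictly left of $z_{i+1}-\delta$. Lemma~\ref{lem:target-placement} puts it in some $\delta$-core, and the prefix gaps all have multiplicity one while $c_L=0$. So the only core available to it is the $\alpha$-gap core $[C,\,C+2\alpha-2\delta]$. Meanwhile $N-z_t$ lies strictly right of $z_{i+1}+\delta$, hence in the core of some gap $G_u$ with $i<u<t$; the boundary equalities there are excluded by Lemma~\ref{lem:boundary-created-internal-edge}.

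I would then write $N$ two ways, namely $N=z_{i+1}+z_t+\delta+2\ell$ and $N=z_s+C$, and feed in the mass $M(G_i,G_t)$, which sector filling bounds below by $\Theta(\alpha,\gamma)\ge(\alpha-1)(\gamma-1)/2$ (Lemma~\ref{lem:theta-bound}). The aim is to convert the placement of $N-z_{t+1}$ inside the $\alpha$-core, together with the placement of $N-z_t$, into a lower bound on $\gamma$. The term $\delta(\alpha-1)/2$ in \eqref{eq:endpoint-growth-one} has the shape of the bound $\Theta(\alpha,\delta+1)\ge\delta(\alpha-1)/2$ from Lemma~\ref{lem:theta-bound}. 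So I would look for a sector-filling count between the $\alpha$-ray and a neighbouring block of height at least $\delta+1$ (from the interior condition $\ell\ge1$) whose mass is squeezed into the $\gamma$-gap by these coordinate identities.

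\textbf{Main obstacle.} I expect the hard step to be identifying exactly which block mass the second reflection bounds, so that the $\gamma$-gap absorbs that sector-filling count. My first attempt will be the case split on whether $N-z_t$ lands in the gap immediately after the $\alpha$-gap or in a later one. In the first case, Corollary~\ref{cor:no-adjacent-large} forces that gap to have multiplicity $1$, which pins the coordinates.

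\textbf{Incompatibility.} Inequality \eqref{eq:endpoint-growth-one} with $\delta\ge2$ gives $\gamma\ge\alpha+3>\alpha$. By Lemma~\ref{lem:endpoint-blocked-initial} and the parity $C\equiv\delta\pmod 2$, either $C\ge3$ or $C=\delta=2$.
\begin{enumerate}[label=(\alph*)]
\item If $C\ge3$, Lemma~\ref{lem:theta-bound} gives $\Theta(\gamma,C)\ge(\gamma-1)(C-1)/2\ge\gamma-1\ge\alpha$. This contradicts \eqref{eq:endpoint-growth-two}, which says $\Theta(\gamma,C)\le\alpha-\delta-1$.
\item If $C=2$, the general $\Theta$ bound is too weak. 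Here I use the reflected form of Lemma~\ref{lem:width-three}, which gives $M(G_t,\mathcal B_R)\ge\gamma-2$. Together with $M(G_t,\mathcal B_R)\le\alpha-\delta-1$ this gives $\gamma\le\alpha-1$, contradicting $\gamma>\alpha$.
\end{enumerate}
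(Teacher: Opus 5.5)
\textbf{Verdict: genuine gap.} The incompatibility step depends on \eqref{eq:endpoint-growth-one}, and the proposal never proves it.

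Your derivation of \eqref{eq:endpoint-growth-two} is the paper's. Your incompatibility step is also sound. For $C\ge3$, the bound $\Theta(\gamma,C)\ge(\gamma-1)(C-1)/2\ge\gamma-1$ is a slightly quicker finish than the paper's rearrangement with $R=\delta(C-1)/4$. For $C=\delta=2$ you use the reflected Lemma~\ref{lem:width-three} exactly as the paper does.

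However, both branches rely on the lower bound $\gamma\ge\alpha+3$, which you extract from \eqref{eq:endpoint-growth-one}. You stop at a description of what you would look for, and that inequality is the substance of the lemma.

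The auxiliary reflection you introduce does not help. The point $N-z_{t+1}=z_{i+1}+\delta+2\ell-2\gamma$ lies in the $\alpha$-core $[z_i+\delta,z_{i+1}-\delta]$, but that only says $\gamma\le\alpha+\ell$. This is the nonnegativity of the suffix mass $\alpha+\ell-\gamma$, an upper bound on $\gamma$, so it points the wrong way. Writing $N=z_s+C$ merely reproduces the identity of Lemma~\ref{lem:reflected-defect}(iv).

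The missing argument uses only the reflection of $z_t$, which you had already located.
\begin{itemize}
\item Let $G_r$ (your $G_u$) be the internal gap whose $\delta$-core contains $N-z_t$. Put $\eta=\lambda_r$ and write $N-z_t=z_r+\delta+2u$ with $0\le u\le\eta-\delta$.
\item Lemma~\ref{lem:boundary-created-internal-edge} excludes $u=0$ and $u=\eta-\delta$, so $u\ge1$ and $\eta\ge\delta+2$. This, not $\ell\ge1$, is where the block of height exceeding $\delta$ comes from.
\item Your ``first case'' $r=i+1$ does not pin coordinates; it is impossible. A nonempty $\delta$-core there needs $\eta\ge\delta\ge2$ next to the $\alpha$-gap, which contradicts Corollary~\ref{cor:no-adjacent-large}.
\item For $r>i+1$, the decisive identity is $z_r=z_{i+1}+2(\ell-u)$. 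It makes the mass strictly between the $\alpha$-gap and the $\eta$-gap exactly $M(G_i,G_r)=\ell-u$.
\item Sector filling and Lemma~\ref{lem:theta-bound} then give $\ell-u\ge\Theta(\alpha,\eta)\ge(\alpha-1)(\eta-1)/2\ge\delta(\alpha-1)/2$.
\item Combining this with $u\ge1$ and $\ell\le\gamma-\delta-1$ yields $\gamma-\delta-2\ge\delta(\alpha-1)/2$, which is \eqref{eq:endpoint-growth-one}.
\end{itemize}

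The bound $M(G_i,G_t)\ge\Theta(\alpha,\gamma)$ that you wanted to feed in is used in the paper only to show $N-z_t<z_t$, i.e.\ $r<t$. You asserted that restriction without justification.
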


\begin{proof}
Put $p=i+1$.  Here $p$ indexes the cut value $z_p=z_{i+1}$; the internal blocking gap remains $G_i=[z_i,z_{i+1}]$.  We now reflect the left endpoint $z_t$ of the $\gamma$-gap, rather than the original cut value $z_p$.  From $T=N-z_p=z_t+\delta+2\ell$ we obtain
\[
        N-z_t=z_p+\delta+2\ell.
\]
We first show that $z_p<N-z_t<z_t$.  The left inequality is clear.  For the right one, the mass between the $\alpha$-gap $G_i$ and the $\gamma$-gap $G_t$ is
\[
        M(G_i,G_t)=\sum_{r=i+1}^{t-1}\lambda_r=\sum_{r=p}^{t-1}\lambda_r=\frac{z_t-z_p}{2}.
\]
Sector filling gives
\[
        M(G_i,G_t)
        \ge \Theta(\alpha,\gamma)
        \ge \frac{(\alpha-1)(\gamma-1)}2
        \ge \frac{\delta(\gamma-1)}2.
\]
Since $\ell\le\gamma-\delta-1$, this is greater than $\ell+\delta/2$.  Hence $z_t-z_p>\delta+2\ell$, proving $N-z_t<z_t$.

By Lemma~\ref{lem:target-placement}, $N-z_t$ lies in the $\delta$-core of an internal gap $[z_r,z_{r+1}]$ with $p\le r<t$.  Write $\eta=\lambda_r$ and
\[
        N-z_t=z_r+\delta+2u,
        \qquad
        0\le u\le \eta-\delta.
\]
The boundary cases $u=0$ and $u=\eta-\delta$ create, respectively, an internal negative or positive edge whose sign-propagation gaps are internal.  Lemma~\ref{lem:boundary-created-internal-edge} excludes these boundary equalities.  Thus
\[
        1\le u\le\eta-\delta-1,
        \qquad
        \eta\ge\delta+2.
\]
Moreover,
\[
        z_r=z_p+2(\ell-u).
\]
If $r=p$, then the adjacent gaps $\alpha$ and $\eta$ are both large, impossible.  Hence $r>p$, so $1\le u<\ell$.  The mass between the $\alpha$-gap $G_i$ and the $\eta$-gap $G_r$ is
\[
        M(G_i,G_r)=\sum_{q=i+1}^{r-1}\lambda_q=\ell-u,
\]
and sector filling gives
\[
        \ell-u\ge\Theta(\alpha,\eta)
        \ge\frac{(\alpha-1)(\eta-1)}2
        \ge\frac{\delta(\alpha-1)}2.
\]
Using $u\ge1$ and $\ell\le\gamma-\delta-1$ gives \eqref{eq:endpoint-growth-one}.

The second inequality comes from the suffix after the $\gamma$-gap.  Since
\[
        z_t+\delta+2\ell=T=z_s+\delta-2\alpha,
\]
we have the suffix upper bound
\[
        M(G_t,\mathcal B_R)=\sum_{r=t+1}^{s-1}\lambda_r=\alpha+\ell-\gamma\le \alpha-\delta-1.
\]
Because $C\ge\delta>0$, endpoint sector filling applies to the $\gamma$-gap and the endpoint cap $C$, so
\[
        M(G_t,\mathcal B_R)\ge\Theta(\gamma,C).
\]
Combining the two bounds gives
\[
        \Theta(\gamma,C)\le \alpha-\delta-1,
\]
which is \eqref{eq:endpoint-growth-two}.

If $(\delta,C)\ne(2,2)$, then $C\equiv\delta\pmod2$ implies either $\delta\ge3$ or $\delta=2$ and $C\ge4$.  Combining \eqref{eq:endpoint-growth-one}, \eqref{eq:endpoint-growth-two}, and Lemma~\ref{lem:theta-bound} gives
\[
        \alpha\ge
        \delta+1+
        \frac{(\delta+1)(C-1)}2
        +R(\alpha-1),
        \qquad
        R=\frac{\delta(C-1)}4.
\]
Here $R>1$.  Rearranging yields
\[
        (R-1)\alpha
        \le
        R-
        \left(\delta+1+\frac{(\delta+1)(C-1)}2\right),
\]
This right-hand side is
\[
        -\left(\delta+1+\frac{(\delta+2)(C-1)}4\right),
\]
which is negative.  Since $R-1>0$ and $\alpha>0$, this is impossible.

It remains to treat the boundary alignment isolated in Lemma~\ref{lem:width-three}, namely $\delta=C=2$.  Then \eqref{eq:endpoint-growth-one} gives $\gamma\ge\alpha+3$.  The reflected endpoint estimate for the multiplicity-two cap gives
\[
        \sum_{r=t+1}^{s-1}\lambda_r\ge \gamma-2,
\]
while the suffix bound above gives the same sum at most $\alpha-3$.  Hence $\gamma\le\alpha-1$, a contradiction.
\end{proof}

\begin{proposition}[One-sided endpoint exclusion]\label{prop:endpoint-blocked-internal}
There is no negative edge
\[
        z_i+z_s=N-\delta,
        \qquad i<s,
\]
whose propagation at $z_i$ is internally blocked by $\lambda_i>\delta$ and for which $N-z_{i+1}$ lies in the $\delta$-core of an internal gap.  The reflected statement holds for positive endpoint-blocked edges at the left endpoint.
\end{proposition}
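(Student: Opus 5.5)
The proof will be a direct assembly of the three preceding lemmas: Lemmas~\ref{lem:endpoint-blocked-initial}, \ref{lem:endpoint-blocked-position} and \ref{lem:endpoint-blocked-growth} were set up as a chain ending in a contradiction. By Lemma~\ref{lem:reflection} it suffices to treat the negative right-endpoint form. Under the involution $z\mapsto N-z$, a positive edge $z_0+z_j=N+\delta$ at the left endpoint, internally blocked at $z_j$ by $\lambda_{j-1}>\delta$, becomes a negative edge at the right endpoint of $N-Z$. Its blocking gap is internal, and the reflected target is carried to an internal $\delta$-core of $N-Z$.

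So assume, for contradiction, that $z_i+z_s=N-\delta$ with $i<s$, that $\alpha=\lambda_i>\delta$, and that $T=N-z_{i+1}=z_s+\delta-2\alpha$ lies in the $\delta$-core of an internal gap $[z_t,z_{t+1}]$. Put $C=c_R$, so that $z_i=C-\delta$.

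\textbf{Step 1.} Lemma~\ref{lem:endpoint-blocked-initial} applies verbatim and gives the prefix rigidity $\delta\le C<\alpha$, $c_L=0$, and $\lambda_0=\cdots=\lambda_{i-1}=1$. By the parity statement of Lemma~\ref{lem:gap-dictionary}, it also gives $C\equiv\delta\pmod 2$.

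\textbf{Step 2.} Lemma~\ref{lem:endpoint-blocked-position} excludes the possibility that $T$ sits in the $\alpha$-gap's own core. It places $T$ strictly inside the core of a later gap $G_t$ with $t>i+1$, writing
\[
T=z_t+\delta+2\ell,\qquad 1\le\ell\le\gamma-\delta-1,\qquad \gamma=\lambda_t\ge\delta+2.
\]

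\textbf{Step 3.} Lemma~\ref{lem:endpoint-blocked-growth} then produces the two inequalities \eqref{eq:endpoint-growth-one} and \eqref{eq:endpoint-growth-two}, which it shows are incompatible. This covers both the generic case $(\delta,C)\ne(2,2)$ and the boundary alignment $\delta=C=2$, the latter via Lemma~\ref{lem:width-three}. That contradiction proves the proposition.

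The main point to verify is that the hypotheses of the proposition are exactly those of Lemma~\ref{lem:endpoint-blocked-initial}; no extremal-component assumption is needed beyond $\delta=\dist(Z,N-Z)\ge2$. The chain invokes Lemma~\ref{lem:boundary-created-internal-edge} for the boundary cases $\ell=0$, $\ell=\gamma-\delta$, $u=0$ and $u=\eta-\delta$. That lemma relies only on the already-completed exclusions of Propositions~\ref{prop:internal-edge}, \ref{prop:internal-loop} and~\ref{prop:endpoint-exclusion}, so there is no circularity.

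I would also check that the created edges in those boundary cases genuinely have internal propagation gaps. In the negative case $z_{i+1}+z_t=N-\delta$, the right gaps at $z_{i+1}$ and $z_t$ are $\lambda_{i+1}$ and $\lambda_t$. Both are internal because $i+1<t<s$. The positive case is the mirror image, with left gaps $\lambda_i$ and $\lambda_t$.
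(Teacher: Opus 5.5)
Your proposal is correct and is essentially the paper's proof: the negative right-endpoint case is excluded by chaining Lemmas~\ref{lem:endpoint-blocked-initial}, \ref{lem:endpoint-blocked-position} and~\ref{lem:endpoint-blocked-growth}, and the positive left-endpoint case follows from Lemma~\ref{lem:reflection}. Your additional checks are accurate but go beyond what the paper writes out: the boundary-created edges have internal propagation gaps because $i+1<t\le s-1$, and Lemma~\ref{lem:boundary-created-internal-edge} depends only on the earlier exclusions, so there is no circularity.
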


\begin{proof}
Lemmas~\ref{lem:endpoint-blocked-initial}, \ref{lem:endpoint-blocked-position}, and~\ref{lem:endpoint-blocked-growth} exclude the negative right-endpoint case.  The positive left-endpoint case follows by Lemma~\ref{lem:reflection}.
\end{proof}

\subsection{The reflection estimate and completion of the first family}

\begin{lemma}[Exhaustion of fully blocked components]\label{lem:sealed-exhaustion}
Assume \(\delta=\dist(Z,N-Z)\ge2\), and let a fully blocked edge or isolated fully blocked loop be produced by Proposition~\ref{prop:sealed-reduction}.  After applying the reflection symmetry of Lemma~\ref{lem:reflection}, choose an adjacent cut value as follows: across a blocking gap of minimum multiplicity when two internal blockers are present, across the unique internal blocker when exactly one internal blocker is present, and across the internal blocker of a loop when one exists.  If no internal adjacent step exists, the component is an endpoint-loop placement.

With this convention the fully blocked component falls into one of the alternatives in the following matrix.
\begin{center}
\small
\begin{tabular}{@{}>{\raggedright\arraybackslash}p{0.22\linewidth}>{\raggedright\arraybackslash}p{0.20\linewidth}>{\raggedright\arraybackslash}p{0.25\linewidth}>{\raggedright\arraybackslash}p{0.23\linewidth}@{}}
\textbf{component} & \textbf{blocked sides} & \textbf{reflected location after the chosen step} & \textbf{excluded by}\\ \hline
non-loop edge & two internal & internal core & Proposition~\ref{prop:internal-edge}\\
non-loop edge & two internal & endpoint core & Proposition~\ref{prop:endpoint-exclusion}\\
non-loop edge & one internal, one endpoint & internal core & Proposition~\ref{prop:endpoint-blocked-internal}\\
non-loop edge & one internal, one endpoint & endpoint core & Lemma~\ref{lem:endpoint-blocked-endpoint-core}\\
loop & internal blocker & internal core & Proposition~\ref{prop:internal-loop}\\
loop & internal blocker & endpoint core & Proposition~\ref{prop:endpoint-exclusion}\\
endpoint loop & no internal step & endpoint placement & Proposition~\ref{prop:endpoint-exclusion}
\end{tabular}
\end{center}
Positive signs and the opposite endpoint are obtained by applying Lemma~\ref{lem:reflection}; the reflection reverses the block order, interchanges \(\mathcal B_L\) and \(\mathcal B_R\), sends \(G_i\) to \(G_{s-1-i}\), and preserves the sector-filling inequalities after reversing the slope order.
\end{lemma}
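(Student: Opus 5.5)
The plan is a case analysis over what Proposition~\ref{prop:sealed-reduction} can produce. After normalizing signs and sides with Lemma~\ref{lem:reflection}, I will show that every fully blocked component lands in exactly one row of the matrix. By Lemma~\ref{lem:reflection} I may assume the component is negative, that is, $z_i+z_j=N-\delta$ with $i\le j$, where $i=j$ is a loop. For such a component, propagation at each endpoint goes to the right. Each endpoint is therefore either internally blocked by some $\lambda>\delta$, as guaranteed by Proposition~\ref{prop:sealed-reduction} when $i$ or $j$ is less than $s$, or endpoint-blocked when the index equals $s$. This splits the analysis into three types: non-loop edges with two internal blockers, non-loop edges with one internal and one endpoint blocker, and loops.

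\textbf{Non-loop edges.} If $i<j<s$, both blockers are internal. I take the step across the blocker of minimum multiplicity (the left one in a tie). Lemma~\ref{lem:target-placement} places the reflected point either in an internal core or in an endpoint core. The internal case is exactly Proposition~\ref{prop:internal-edge}. For the endpoint case, Lemma~\ref{lem:endpoint-core-reductions} already shows that the choice $A\le B$ forces an internal core. The remaining configuration is therefore the $A>B$ left-cap form, which is excluded by Lemma~\ref{lem:cap-exact}, Lemma~\ref{lem:endpoint-normal-forms} and Proposition~\ref{prop:endpoint-exclusion}. If instead $j=s$, then only $z_i$ has an internal blocker, since $i<j=s$. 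The step across it sends the reflected point either to an internal core, handled by Proposition~\ref{prop:endpoint-blocked-internal}, or to an endpoint core, handled by Lemma~\ref{lem:endpoint-blocked-endpoint-core}. A negative edge cannot be endpoint-blocked at both ends, because that would require $i=j=s$, which is a loop.

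\textbf{Loops.} A negative loop with $i<s$ has the internal blocker $\lambda_i>\delta$. Crossing it gives $T=z_i+\delta-2\lambda_i$. By Lemma~\ref{lem:target-placement}, $T$ lies in an internal core, handled by Proposition~\ref{prop:internal-loop}, or in the left endpoint core, handled by Lemma~\ref{lem:endpoint-core-reductions}(2) together with Proposition~\ref{prop:endpoint-exclusion}. A loop with $i=s$ has no internal step. This is the endpoint-loop placement; it is ruled out by the gcd argument in the last paragraph of Lemma~\ref{lem:endpoint-blocked-endpoint-core}, which is referenced through Proposition~\ref{prop:endpoint-exclusion}.

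The main obstacle is bookkeeping rather than estimation. I must check three things: that the reflection normalization really converts every positive component and every right-cap placement into one of the negative left-cap normal forms used above; that the chosen step is well defined, including ties; and that Lemma~\ref{lem:target-placement} leaves no third location for the reflected point. The most delicate point is the endpoint-core case for two internal blockers. There I will invoke the calculation $T\in[z_{i+1},z_j)$ from Proposition~\ref{prop:internal-edge} to exclude the case $A\le B$, and I must confirm that the reflection symmetry preserves both the sector-filling inequalities and the blocker ordering.
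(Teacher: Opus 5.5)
Your proposal is correct and follows essentially the paper's proof. Both normalize to a negative component via Lemma~\ref{lem:reflection}, split by whether there are two, one, or no internally blocked sides (the last forcing an endpoint loop), locate the reflected point after the chosen step with Lemma~\ref{lem:target-placement}, and send each case to the cited exclusion result; your explicit indexing ($i<j<s$, $j=s$, $i=j=s$) just makes concrete the paper's count of internally blocked sides.
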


\begin{proof}
For a negative edge \(z_i+z_j=N-\delta\), the possible propagation at either endpoint is to the right; for a positive edge it is to the left.  This is Definition~\ref{def:blocked} and Lemma~\ref{lem:propagation}.  If an endpoint has an internal blocking gap, passing to the adjacent cut value across that gap gives another point of \(Z\), and Lemma~\ref{lem:target-placement} places its reflection either in an internal core or in an endpoint core.  The number of internal blocked sides is therefore two, one, or zero; the two- and one-sided cases give the non-loop edge rows of the table, while the zero-sided case is excluded as a non-loop edge in the next paragraph.

When two internal blocking gaps are present, choosing one of minimum multiplicity gives precisely the normal form used in Proposition~\ref{prop:internal-edge}; if the chosen reflected point lands in an endpoint core, the endpoint reductions and Proposition~\ref{prop:endpoint-exclusion} apply.  When exactly one side is endpoint-blocked and the other is internal, the internal placement is Proposition~\ref{prop:endpoint-blocked-internal}, while the endpoint placement is Lemma~\ref{lem:endpoint-blocked-endpoint-core}.  If there is no internal adjacent step, then a non-loop edge cannot occur: in the negative case both possible right moves would have to be at $z_s$, and in the positive case both possible left moves would have to be at $z_0$.  Thus this remaining row is an endpoint loop, hence an endpoint placement.

A loop has the same alternatives with the two endpoints identified: either there is an internal blocker, giving an internal-core or endpoint-core placement, or there is no internal step and the loop is endpoint-blocked.  Lemma~\ref{lem:reflection} reduces the positive and opposite-end cases to the listed negative representatives and preserves the displayed exclusions.
\end{proof}

\begin{theorem}[Cut-reflection theorem]\label{thm:near-antipodal}
For $D(k,a)=\Cay(\Z_k;a,a+1)$ with $a\not\equiv0,-1\pmod{k}$, the Hamiltonian cut set $Z$ satisfies
\[
        \dist(Z,N-Z)\le1,
        \qquad N=k-1.
\]
\end{theorem}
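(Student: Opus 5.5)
The plan is to argue by contradiction, assembling the exclusions already established.  Since $D(k,a)$ is in the nondegenerate family, $Z$ is finite and nonempty, so $\delta=\dist(Z,N-Z)$ is a well-defined nonnegative integer.  Suppose $\delta\ge2$.  Then Proposition~\ref{prop:sealed-reduction} applies and produces, in the reflected gap graph $\Gamma_\delta$, either a fully blocked edge or an isolated fully blocked loop.  Moreover, any internal blocking gap at its endpoints has multiplicity strictly greater than $\delta$; otherwise Lemma~\ref{lem:propagation} would create an extra incident edge, which Lemmas~\ref{lem:isolated-loop} and~\ref{lem:degree-path} forbid.

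Next, I will use Lemma~\ref{lem:reflection} to normalize the sign.  Replacing $Z$ by $N-Z$ if necessary makes the component negative.  This replacement preserves $\delta$, the multiplicities (reversed in order), the gcd caps (interchanged), and all sector-filling inequalities.  I then choose the adjacent cut value as in Lemma~\ref{lem:sealed-exhaustion}: across a minimal internal blocker if there are two, across the unique internal blocker if there is one, and across the loop's blocker for a loop.  Lemma~\ref{lem:target-placement} places the reflected point either in an internal $\delta$-core or in an endpoint $\delta$-core.  This yields exactly the rows of the table in Lemma~\ref{lem:sealed-exhaustion}, and I dispatch each row by the result listed there:
\begin{itemize}
\item two internal blockers with an internal core: Proposition~\ref{prop:internal-edge};
\item two internal blockers with an endpoint core: the reductions of Lemmas~\ref{lem:endpoint-core-reductions}, \ref{lem:cap-exact}, \ref{lem:endpoint-normal-forms}, followed by Proposition~\ref{prop:endpoint-exclusion};
\item one endpoint-blocked side with an internal core: Proposition~\ref{prop:endpoint-blocked-internal};
\item one endpoint-blocked side with an endpoint core: Lemma~\ref{lem:endpoint-blocked-endpoint-core};
\item loops with an internal blocker: Propositions~\ref{prop:internal-loop} and~\ref{prop:endpoint-exclusion};
\item loops with no internal step: the gcd argument of Lemma~\ref{lem:endpoint-blocked-endpoint-core}, since $c_R+1=\gcd(k,a+1)>k/2$ would force $a\equiv-1$, and symmetrically $c_L+1=\gcd(k,a)$ would force $a\equiv0$.
\end{itemize}
Every alternative is contradictory, so $\delta\le1$.

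The main obstacle is verifying that the case analysis really is exhaustive, in particular that a non-loop edge with zero internal blocked sides cannot occur.  For a negative edge $z_i+z_j=N-\delta$ with $i\ne j$, both endpoints cannot equal $z_s$, so at least one side has an internal step.  Positive edges are handled symmetrically at $z_0$.

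A second check is that the endpoint-core subcase is fully covered.  When the chosen minimal blocker lies on the left ($A\le B$), the reflected point falls in $[z_{i+1},z_j)$, as shown in Proposition~\ref{prop:internal-edge}, so only the case $A>B$ can reach an endpoint core.  That is precisely the normal form excluded via Lemma~\ref{lem:width-three} when $\delta=2$, and via Lemma~\ref{lem:endpoint-ge3} when $\delta\ge3$.
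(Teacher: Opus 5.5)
Your proposal is correct and follows the paper's route. You assume $\delta\ge2$, extract a fully blocked edge or isolated fully blocked loop from Proposition~\ref{prop:sealed-reduction}, normalize by Lemma~\ref{lem:reflection}, and dispatch each row of the case matrix of Lemma~\ref{lem:sealed-exhaustion} with the same exclusion results. The exhaustiveness checks you single out, that no non-loop edge lacks an internal step and that only the case $A>B$ can reach an endpoint core, are the ones the paper carries out inside Lemmas~\ref{lem:sealed-exhaustion} and~\ref{lem:endpoint-core-reductions}.
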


\begin{proof}
Assume $\delta=\dist(Z,N-Z)\ge2$.  Choose $u,v\in Z$ with $|u+v-N|=\delta$; then $\Gamma_\delta$ is nonempty.  Proposition~\ref{prop:sealed-reduction} gives a fully blocked edge or an isolated fully blocked loop.  Apply Lemma~\ref{lem:sealed-exhaustion}.  Its case matrix leaves only the following exclusion mechanisms.  The displayed mass comparisons are representative; the other orientations are obtained by interchanging the endpoints or applying reflection:
\begin{center}
\small
\begin{tabular}{@{}>{\raggedright\arraybackslash}p{0.30\linewidth}>{\raggedright\arraybackslash}p{0.36\linewidth}>{\raggedright\arraybackslash}p{0.25\linewidth}@{}}
\textbf{canonical alternative} & \textbf{representative mass comparison} & \textbf{exclusion}\\ \hline
Two internal blockers, edge & $M(G_t,G_j)\le A-\delta<\Theta(h(G_t),B)$ & Proposition~\ref{prop:internal-edge}\\
Internal blocker, loop & $M(G_t,G_i)\le A-\delta<\Theta(h(G_t),A)$ & Proposition~\ref{prop:internal-loop}\\
One endpoint cap and one internal blocker & prefix rigidity, then second reflection & Proposition~\ref{prop:endpoint-blocked-internal}\\
Endpoint-core placement, or no internal move & $C=\delta$, then cap-sector contradiction & Proposition~\ref{prop:endpoint-exclusion}
\end{tabular}
\end{center}
Lemma~\ref{lem:sealed-exhaustion} shows that these mechanisms cover every fully blocked component.  Hence no component of $\Gamma_\delta$ can exist, contradicting the choice of $u,v$.  Therefore $\dist(Z,N-Z)\le1$.
\end{proof}

\begin{corollary}[Parity-sharp form]\label{cor:parity-sharp}
For $D(k,a)=\Cay(\Z_k;a,a+1)$ with $a\not\equiv0,-1\pmod{k}$, let $Z$ be the Hamiltonian cut set and $N=k-1$.  Then
\[
        \dist(Z,N-Z)=
        \begin{cases}
        0,& k\text{ odd},\\
        1,& k\text{ even}.
        \end{cases}
\]
Equivalently, if $k$ is odd then there are $d,e\in Z$ with $d+e=k-1$, while if $k$ is even exact reflection is forbidden by parity and the cut-reflection theorem gives a pair with $d+e\in\{k-2,k\}$.
\end{corollary}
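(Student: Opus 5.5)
The plan is to combine Theorem~\ref{thm:near-antipodal} with a parity observation coming from Lemma~\ref{lem:gap-dictionary}. By that lemma, every element of $Z$ has the parity of $c_L$. Hence for any $u,v\in Z$ the sum $u+v$ is even. It follows that $u+v-N$ has the parity of $N=k-1$.

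The case split is by the parity of $k$.

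\emph{Case $k$ even.} Here $N$ is odd, so $|u+v-N|$ is odd for all $u,v\in Z$. In particular $|u+v-N|\ge1$, and therefore $\dist(Z,N-Z)\ge1$. Theorem~\ref{thm:near-antipodal} gives the reverse inequality, so the distance equals $1$. Since $u+v$ is even, the realizing pair satisfies $u+v=N\pm1\in\{k-2,k\}$, which is the stated equivalent form.

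\emph{Case $k$ odd.} Here $N$ is even, so $|u+v-N|$ is even. Theorem~\ref{thm:near-antipodal} gives a pair with $|u+v-N|\le1$, and an even integer of absolute value at most $1$ must be $0$. So some $d,e\in Z$ satisfy $d+e=N=k-1$, and $\dist(Z,N-Z)=0$.

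The only point needing care is the nonemptiness of $Z$, so that the distance is defined. This was already recorded after Theorem~\ref{input:lattice}: in the nondegenerate family $f\ge2$. I expect no real obstacle, since the substantive content is the cut-reflection theorem itself.

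The final restatement in the corollary translates $d+e$ into the count-criterion window of Theorem~\ref{input:count}. Both cases land in $\{k-2,k-1,k\}$, using $P_d$ and $P_e$ with $\delta_b(P_d)=d$ and $\delta_b(P_e)=e$.
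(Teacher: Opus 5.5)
Your proposal is correct and follows essentially the same argument as the paper. All cut values share the parity of $c_L$, so $u+v-N$ has the parity of $N$; the bound $\dist(Z,N-Z)\le1$ then forces distance $0$ for odd $k$ and exactly $1$ for even $k$. The paper states this as $Z$ and $N-Z$ having the same or opposite parity, which is the same observation.
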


\begin{proof}
All elements of $Z$ have the same parity, since consecutive cut values differ by $2\lambda_r$.  If $k$ is odd, then $N=k-1$ is even, so $Z$ and $N-Z$ have the same parity.  Theorem~\ref{thm:near-antipodal} gives distance at most one, hence distance zero.  If $k$ is even, then $N$ is odd, so $Z$ and $N-Z$ have opposite parity and cannot intersect.  The distance is therefore exactly one.
\end{proof}

Theorem~\ref{thm:near-antipodal} is a near-antipodal pair statement.  It gives a pair of cut values whose sum is within one of $N$; it does not assert that the whole set $Z$ is close to $N-Z$ in Hausdorff distance.

\begin{example}[Sharpness of the reflection bound]\label{ex:first-family}
Continuing Example~\ref{ex:lattice-10-4}, we have $N=9$, and the cut permutations give
\[
        Z=\{1,3,5\},
        \qquad
        N-Z=\{8,6,4\}.
\]
Here $k$ is even, so Corollary~\ref{cor:parity-sharp} predicts that exact reflection is impossible and distance one is the best possible.  The count identities
\[
        3+5=8=k-2,
        \qquad
        5+5=10=k
\]
then give the input required by the count criterion.  Figure~\ref{fig:near-antipodal-example} shows the reflected sets on the number line.
\end{example}

\begin{figure}[H]
\centering
\begin{tikzpicture}[x=0.63cm,y=0.6cm]
\draw[->] (-0.2,0)--(9.7,0);
\foreach \x in {0,1,...,9}{\draw (\x,0.09)--(\x,-0.09) node[below]{\scriptsize \x};}
\foreach \x in {1,3,5}{\fill[blue] (\x,0) circle (2.2pt);}
\foreach \x in {4,6,8}{\draw[red,thick] (\x,0) circle (2.5pt);}
\draw[<->,thick] (4,0.55)--(5,0.55) node[midway,above]{\scriptsize distance $1$};
\node[blue] at (2.0,0.85){\scriptsize $Z$};
\node[red] at (7.2,0.85){\scriptsize $N-Z$};
\end{tikzpicture}
\caption{The Hamiltonian cut values $Z=\{1,3,5\}$ for $\Cay(\Z_{10};4,5)$ and their reflection $N-Z=\{8,6,4\}$ with $N=9$.}
\label{fig:near-antipodal-example}
\end{figure}

\begin{example}[Endpoint cap of multiplicity two]\label{ex:width-three}
For $k=15$ and $a=3$, one has $\gcd(k,a)=3$ and $\gcd(k,a+1)=1$, hence $c_L=2$ and $c_R=0$.  Direct computation of the cut permutations gives
\[
        Z=\{2,4,6,8,14\},
        \qquad N=14.
\]
The exact reflected pair $6+8=14$ shows how the endpoint cap of multiplicity two is forced to produce a count pair.  Lemma~\ref{lem:width-three} is the general estimate behind this small example.
\end{example}

\begin{proof}[Proof of Theorem~\ref{thm:first-family}]
By Theorem~\ref{thm:near-antipodal}, there exist $d,e\in Z$ (possibly equal) such that
\[
        |d-(N-e)|\le1.
\]
Equivalently,
\[
        d+e\in\{N-1,N,N+1\}=\{k-2,k-1,k\}.
\]
Since every element of $Z$ is realized by a Hamiltonian cut path, the chosen $d,e\in Z$ give Hamiltonian paths $P_d,P_e$ with
\[
        \delta_b(P_d)=d,
        \qquad
        \delta_b(P_e)=e.
\]
Theorem~\ref{input:count} now gives two arc-disjoint Hamiltonian paths.
\end{proof}

\begin{corollary}[Finite enumeration of cut values]\label{cor:cut-search}
For the family $\Cay(\Z_k;a,a+1)$, a pair of cut values $d,e\in Z$ with $d+e\in\{k-2,k-1,k\}$ can be found by testing the $k$ permutations $\Phi_d$ and forming the set $Z$.  A direct cycle test for each permutation gives an $O(k^2)$ enumeration of cut values.  Given such a pair, the construction in the proof of \cite[Proposition~4.3]{DMM} produces two arc-disjoint Hamiltonian paths from the corresponding Hamiltonian cut paths.
\end{corollary}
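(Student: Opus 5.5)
The statement to prove is Corollary~\ref{cor:cut-search}. It has three parts: enumeration of $Z$ in $O(k^2)$ time, existence of the required pair, and the final construction. I would treat each in turn.

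\emph{Enumeration.} For each $d\in\{0,1,\ldots,k-1\}$ I would build the permutation $\Phi_d$ directly from its three-case definition in $O(k)$ time. To decide whether it is a $k$-cycle, I would start at $0$, iterate $\Phi_d$, and count the steps until the orbit first returns to $0$. Then $\Phi_d$ is a $k$-cycle exactly when this return time equals $k$. This costs $O(k)$ per value of $d$, so $O(k^2)$ over all $k$ values. By the equivalence ``$P_d$ is Hamiltonian $\iff$ $\Phi_d$ is a $k$-cycle'' established in the subsection on cut permutations, the set of values $d$ that pass this test is exactly $Z$.

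\emph{Existence and search for the pair.} Theorem~\ref{thm:near-antipodal} guarantees $d,e\in Z$ with $d+e\in\{k-2,k-1,k\}$, possibly with $d=e$, exactly as in the proof of Theorem~\ref{thm:first-family}. So the search cannot fail. To find the pair, I would store $Z$ in a boolean array indexed by $0,\ldots,N$. For each $d\in Z$ and each target $t\in\{k-2,k-1,k\}$, I would check whether $t-d\in Z$. This takes $O(k)$ additional time, well within the $O(k^2)$ bound.

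\emph{Construction.} Once the pair is found, the paths $P_d$ and $P_e$ are written down explicitly from the definition of the standard cut candidates. They satisfy $\delta_b(P_d)=d$ and $\delta_b(P_e)=e$. Theorem~\ref{input:count}, which is \cite[Proposition~4.3]{DMM}, then produces two arc-disjoint Hamiltonian paths from them.

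The only point needing care is whether that proposition's proof is constructive. I would rely on it being stated as an explicit construction starting from the two given paths. If that turned out not to be so, I would keep the existence claim and drop only the word ``produces''.

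Apart from that, the whole argument is bookkeeping on top of Theorem~\ref{thm:near-antipodal}.
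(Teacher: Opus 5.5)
Your proposal is correct and is essentially the paper's own (implicit) argument: one orbit computation per permutation $\Phi_d$ recovers $Z$ in $O(k^2)$ time, Theorem~\ref{thm:near-antipodal} guarantees the pair $d,e\in Z$ exactly as in the proof of Theorem~\ref{thm:first-family}, and Theorem~\ref{input:count} converts $P_d,P_e$ into two arc-disjoint Hamiltonian paths. Your linear-time lookup for the pair (which needs the bounds check $0\le t-d\le N$) and your caveat about the constructivity of the cited proposition are harmless refinements of what the paper simply asserts.
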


\section{The second cyclic family}\label{sec:second-family}

\begin{theorem}\label{thm:second-family}
Let $a\ge1$, put $M=2a+1$, and let $k=ML$ with $L\ge2$.  Then
\[
        \Cay(\Z_k;-a,a+1)
\]
has two arc-disjoint Hamiltonian paths.  Moreover, one path is obtained by deleting a single arc from a Hamiltonian cycle arising from a quotient-position rule with $|S|=a+2$; the other is obtained from the complementary cover, with one splice required exactly when $L$ is even.
\end{theorem}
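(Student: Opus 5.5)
The plan is an explicit quotient--fiber construction.  Put $M=2a+1$ and $H=\langle M\rangle\le\Z_k$, a subgroup of order $L$.  Since $-a\equiv a+1\pmod M$, both generators induce the same translation $x\mapsto x+(a+1)$ on $\Z_k/H\cong\Z_M$.  Because $\gcd(a+1,2a+1)=1$, this translation is a single $M$-cycle.  Label the cosets by their position $0,1,\ldots,M-1$ along this cycle.  The two generators differ by $(a+1)-(-a)=M$, so they differ only in the fiber direction.

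For a set $S$ of positions, the \emph{quotient-position rule} sends every vertex in a coset at a position in $S$ along $-a$, and every other vertex along $a+1$.  The first step is to check that any such rule is a permutation of $\Z_k$, i.e.\ a spanning cycle cover.  A vertex $y$ has exactly two in-neighbours, $y+a$ and $y-(a+1)$.  These differ by $M$, so they lie in the same coset and the rule picks exactly one of them.  The rule for $S^c$ uses precisely the arcs not used by the rule for $S$, so the two covers are arc-disjoint.

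The second step computes fiber shifts.  Follow a vertex once around the quotient cycle, which takes $M$ steps.  The total displacement is
\[
|S|(-a)+(M-|S|)(a+1)=M\bigl(a+1-|S|\bigr).
\]
Take $|S|=a+2\le M$; here $a\ge1$ gives $a+2\le 2a+1$.  Then the displacement is $-M$, a generator of $H$.  Hence the first cover has a single orbit, i.e.\ it is a Hamiltonian cycle $C$.  The complementary cover has $|S^c|=a-1$, so its displacement is $2M$.  If $L$ is odd, $2M$ generates $H$ and the complementary cover is a Hamiltonian cycle $C'$.  If $L$ is even, it splits into exactly two cycles $C_1',C_2'$, one for each coset of $\langle 2M\rangle$ inside the fibers.

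For $L$ odd, delete any arc from $C$ and any arc from $C'$; the two Hamiltonian paths are arc-disjoint because $C$ and $C'$ are.  For $L$ even, splice as follows.  The cycle $C$ is connected and spanning, so some arc $x\to y$ of $C$ has $x\in C_1'$ and $y\in C_2'$.  Let $x\to x'$ be the arc of the complementary cover out of $x$, and let $y^-$ be the predecessor of $y$ on $C_2'$.  Form $Q$ by traversing $C_1'$ from $x'$ around to $x$, then using $x\to y$, then traversing $C_2'$ from $y$ to $y^-$.  This $Q$ is a Hamiltonian path whose only arc outside the complementary cover is $x\to y$.  Let $P$ be $C$ with exactly the arc $x\to y$ deleted.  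Then $P$ and $Q$ are arc-disjoint Hamiltonian paths.

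The step needing most care is the fiber-shift bookkeeping.  One must confirm that the quotient position really is the step index along the single quotient cycle, so that the per-round displacement is exactly the displayed sum.  One must also confirm that the even case gives exactly two cycles, which requires $\gcd(2,L)=2$ with $L\ge2$.  The degenerate case $a=1$ has $S^c=\emptyset$; there the complementary cover is simply the $(a+1)$-translation, which the same computation covers.
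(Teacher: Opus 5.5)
Your proposal is correct and follows the paper's proof essentially step for step. The paper uses the same quotient-position rule with $|S|=a+2$ (return shift $-M$, hence a Hamiltonian cycle) and the same complementary cover (return shift $2M$, hence $\gcd(L,2)$ cycles); for even $L$ it uses the same splice along an arc of the Hamiltonian cycle crossing between the two complementary cycles, with that arc deleted from the cycle. Your in-neighbour argument for bijectivity and your explicit traversal of the spliced path are only slightly more detailed versions of the paper's corresponding steps.
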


\begin{proof}
Put
\[
        A=-a,
        \qquad
        B=a+1.
\]
Then
\[
        B-A=M.
\]
Since $L\ge2$, the two generators are distinct modulo $k$.  Moreover,
\[
        \langle A,B\rangle=\langle B,M\rangle=\Z_k,
\]
because $\gcd(B,M)=\gcd(a+1,2a+1)=1$.

Modulo $M$, the generators $A$ and $B$ are equal, and this common residue is a unit.  Figure~\ref{fig:quotient-fiber} gives the quotient-fiber picture.  Let $t(x)\in\Z_M$ be the quotient coordinate defined by
\[
        x\equiv t(x)(a+1)\pmod M.
\]
Then both $A$-arcs and $B$-arcs increase $t$ by one.

\begin{figure}[H]
\centering
\begin{tikzpicture}[x=1.0cm,y=0.62cm,>=Stealth]
\foreach \x in {0,1,2,3,4}{
  \foreach \y in {0,1,2}{\draw[gray!45] (\x,\y) circle (1.1pt);}
}
\foreach \x in {0,1,2,3}{
  \draw[->,blue,thick] (\x,0.18)--(\x+1,0.18);
  \draw[->,red,dashed,thick] (\x,1.05)--(\x+1,1.05);
}
\draw[->,blue,thick] (4,0.18) .. controls (4.55,0.42) and (-0.55,0.42) .. (0,0.18);
\draw[->,red,dashed,thick] (4,1.05) .. controls (4.6,1.35) and (-0.6,1.35) .. (0,1.05);
\draw[decorate,decoration={brace,amplitude=4pt}] (-0.28,0)--(-0.28,2) node[midway,left=5pt] {\scriptsize fiber};
\draw[decorate,decoration={brace,mirror,amplitude=4pt}] (0,-0.25)--(4,-0.25) node[midway,below=5pt] {\scriptsize quotient cycle};
\node[blue] at (2,-0.75) {\scriptsize $A$-arcs};
\node[red] at (2,1.85) {\scriptsize $B$-arcs};
\end{tikzpicture}
\caption{Quotient-fiber view of the second cyclic family.  Both generators advance one step in the quotient; one full quotient round gives fiber shift $-1$ for $P$ and $+2$ for the complementary cover $Q$.}
\label{fig:quotient-fiber}
\end{figure}

For a subset $S\subseteq\Z_M$, define a map $F_S$ of $\Z_k$ by
\[
F_S(x)=
\begin{cases}
        x+A, & t(x)\in S,\\
        x+B, & t(x)\notin S.
\end{cases}
\]
This is a skew product over the quotient cycle: in coordinates $(t,j)$ it has the form $(t,j)\mapsto(t+1,j+\epsilon(t))$, where the fiber increment depends on whether $t\in S$.  It is a permutation: if $y$ has quotient coordinate $t(y)$, then its predecessor must have quotient coordinate $t(y)-1$, and it is uniquely $y-A$ when $t(y)-1\in S$ and $y-B$ otherwise.  Since the quotient motion is one $M$-cycle, the cycle structure of $F_S$ is determined by the return shift on the fiber.

If $|S|=x$, then one quotient round uses $x$ arcs labelled $A$ and $M-x$ arcs labelled $B$, so the total displacement is
\[
        xA+(M-x)B=M(a+1-x).
\]
On the fiber $M\Z_k\cong\Z_L$, the one-round shift is therefore $a+1-x$.  In quotient-fiber coordinates the map has the form $(t,j)\mapsto(t+1,j+\epsilon(t))$, and one full quotient round sends $j$ to $j+a+1-x$.  Since the quotient motion is a single $M$-cycle, the cycles of $F_S$ are the orbits of this return shift on the fiber, and hence their number is $\gcd(L,a+1-x)$.

For a general choice $|S|=x$, the complementary cover has shift $x-a=1-(a+1-x)$, so the two return shifts always sum to $1$.  To obtain a Hamiltonian cycle for every $L$, choose the quotient positions so that the first return shift is a unit.  Thus choose any subset $S\subseteq\Z_M$ with $|S|=a+2$, and put $C_S=F_S$.  The cycle structure of $C_S$ depends on $S$ through the return shift $a+1-|S|$ on the fiber.  Then $C_S$ has fiber shift $-1$.  Since the quotient motion is a single $M$-cycle and the return shift $-1$ is a unit on the fiber $\Z_L$, the permutation $C_S$ is a Hamiltonian cycle; call this cycle $P$.  Let $Q=F_{\Z_M\setminus S}$ be the complementary cycle cover.  It is arc-disjoint from $P$, since at each vertex it uses the other generator.  The number of quotient positions where $Q$ uses $A$ is
\[
        M-(a+2)=a-1,
\]
so the fiber shift of $Q$ is
\[
        a+1-(a-1)=2.
\]
Thus $Q$ has $\gcd(L,2)$ cycles.

If $L$ is odd, $Q$ is a Hamiltonian cycle, and deleting one arc from each of $P$ and $Q$ gives two arc-disjoint Hamiltonian paths.

Assume now that $L$ is even.  Then $Q$ has two directed cycles, say $Q_0$ and $Q_1$.  Since $P$ is a Hamiltonian cycle, some arc $e=(u,v)$ of $P$ goes from one $Q$-cycle to the other.  Indeed, if no such arc existed, then both nonempty proper sets $V(Q_0)$ and $V(Q_1)$ would be invariant under the successor permutation of $P$, contradicting the fact that this permutation has a single orbit on all of $\Z_k$.

Assume $u\in V(Q_0)$ and $v\in V(Q_1)$.  Removing the outgoing $Q$-arc at $u$ turns the $Q_0$-cycle into a path ending at $u$; removing the incoming $Q$-arc at $v$ turns the $Q_1$-cycle into a path starting at $v$.  Inserting the arc $u\to v$ concatenates these two paths in the correct direction, producing one Hamiltonian path.  Remove the same arc $u\to v$ from the Hamiltonian cycle $P$.  The result is another Hamiltonian path.  The inserted arc is the arc removed from $P$, and before insertion it was not an arc of $Q$, because $P$ and $Q$ used complementary generators at every vertex.  Hence the two final paths are arc-disjoint.
\end{proof}

\begin{example}\label{ex:second-family}
Take $a=1$, $M=3$, $k=6$, and $L=2$.  Then $A=-1\equiv5$ and $B=2$ modulo $6$.  The quotient coordinate is determined by $x\equiv 2t(x)\pmod 3$, so the successive vertices $0,5,4,3,2,1$ have quotient coordinates $0,1,2,0,1,2$.  Choosing $S=\Z_3$ gives the Hamiltonian cycle
\[
        P:
        0\to5\to4\to3\to2\to1\to0.
\]
The complement uses only $B$-arcs and has two cycles
\[
        0\to2\to4\to0,
        \qquad
        1\to3\to5\to1.
\]
The arc $0\to5$ of $P$ crosses between these two cycles.  Removing $0\to2$ and $3\to5$ from the complement and inserting $0\to5$ gives
\[
        2\to4\to0\to5\to1\to3,
\]
a Hamiltonian path.  Removing $0\to5$ from $P$ gives the second Hamiltonian path.
\end{example}

\FloatBarrier
\section{The finite abelian two-generator theorem}

\begin{proof}[Proof of Theorem~\ref{thm:main}]
The reduction theorem of Darijani--Miraftab--Witte Morris leaves exactly the two cyclic assertions stated in Theorem~\ref{input:reduction}.  The first, the family $\Cay(\Z_k;a,a+1)$, is Theorem~\ref{thm:first-family}; its proof obtains a pair of Hamiltonian cut values whose sum lies in $\{k-2,k-1,k\}$ from the cut-reflection theorem and then applies Theorem~\ref{input:count}.  The second, the family $\Cay(\Z_k;-a,a+1)$, is Theorem~\ref{thm:second-family}, proved directly by the quotient--fiber cycle and the complementary cover.  These two results are precisely the cyclic assertions required by the reduction theorem.  Hence Theorem~\ref{thm:main} follows.
\end{proof}

\FloatBarrier
\section{The product of three directed cycles}\label{sec:threecycles}

Darijani--Miraftab--Witte Morris proved the two-factor case and the cases with at least four directed-cycle factors.  Theorem~\ref{thm:threecycles} proves the remaining three-factor case; together these results prove Corollary~\ref{cor:all-cycle-products}.

The lifting construction below alternates translated copies of two Hamiltonian paths in consecutive layers.  The condition $P\cap(Q+\gamma)=\varnothing$ gives the horizontal disjointness needed when the relative translation is $\gamma$, while the two terminal inequalities ensure that the vertical arcs leaving a layer have distinct tails.

\begin{definition}[Strongly switchable pair]\label{def:strong-switchable}
Let $D=\Cay(G;s_1,s_2)$ be a directed Cayley digraph, and let $P,Q$ be arc-disjoint Hamiltonian paths in $D$.  Write their initial and terminal vertices as
\[
        \iota_P,\tau_P,
        \qquad
        \iota_Q,\tau_Q.
\]
Set
\[
        \alpha=\tau_P-\iota_Q,
        \qquad
        \beta=\tau_Q-\iota_P,
        \qquad
        \gamma=\alpha-\beta.
\]
The ordered pair $(P,Q)$ is strongly switchable if
\[
        P\cap(Q+\gamma)=\varnothing
\]
as arc sets, and
\[
        \tau_P\ne\tau_Q,
        \qquad
        \tau_P\ne\tau_Q+
        \gamma.
\]
\end{definition}

\begin{lemma}[Strongly switchable lifting]\label{lem:strong-lifting}
If a finite directed Cayley digraph $D$ has a strongly switchable pair, then $D\Boxprod\vec C_\ell$ has two arc-disjoint Hamiltonian paths for every $\ell\ge2$.
\end{lemma}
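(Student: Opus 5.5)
We build the two paths layer by layer. Write the vertices of $D\Boxprod\vec C_\ell$ as $(g,i)$ with $g\in G$ and $i\in\Z_\ell$. There are two kinds of arcs: horizontal arcs $(g,i)\to(g+s_j,i)$ and vertical arcs $(g,i)\to(g,i+1)$. Translation by any $x\in G$ is an automorphism of $D$. Hence $P+x$ and $Q+x$ are Hamiltonian paths of $D$, and copies of them in a fixed layer $G\times\{i\}$ are Hamiltonian paths of that layer.

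The first path $X$ will traverse layer $0$ along $P+x_0$, take the vertical arc from its terminal vertex to layer $1$, traverse layer $1$ along $Q+x_1$, and continue alternating $P,Q,P,\dots$ through layers $0,1,\dots,\ell-1$. It never uses the wrap-around arcs from layer $\ell-1$ to layer $0$. For the vertical step from a $P$-copy to the next $Q$-copy to land on the initial vertex, we need $\iota_Q+x_{i+1}=\tau_P+x_i$, that is, $x_{i+1}=x_i+\alpha$. The step from a $Q$-copy to the next $P$-copy similarly forces $x_{i+1}=x_i+\beta$.

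The second path $Y$ will do the same with the roles swapped. It uses $Q+y_0$ in layer $0$, then $P+y_1$, and so on, with $y_1=y_0+\beta$, then an increment of $\alpha$, alternating. Both $X$ and $Y$ visit every layer exactly once along a Hamiltonian path of that layer, joined by single vertical arcs, so both are Hamiltonian paths of $D\Boxprod\vec C_\ell$.

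The key choice is $x_0=0$ and $y_0=\gamma$. By induction, in every even layer $2r$ we get $x_{2r}=r(\alpha+\beta)$ and $y_{2r}=\gamma+r(\alpha+\beta)$. In every odd layer $2r+1$ we get $x_{2r+1}=\alpha+r(\alpha+\beta)$ and $y_{2r+1}=\gamma+\beta+r(\alpha+\beta)=\alpha+r(\alpha+\beta)$.

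Horizontal disjointness follows by cases on the parity of the layer. In an even layer, $X$ uses $P+x$ and $Y$ uses $Q+x+\gamma$. These are disjoint exactly when $P\cap(Q+\gamma)=\varnothing$, which is the strong switchability hypothesis. In an odd layer, $X$ uses $Q+x'$ and $Y$ uses $P+x'$ with the same translate $x'$. These are disjoint because $P$ and $Q$ are arc-disjoint.

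Vertical arcs never coincide with horizontal ones, so it remains to check that within each layer the vertical arcs of $X$ and $Y$ have distinct tails. In an even layer, the tails are $\tau_P+x$ and $\tau_Q+x+\gamma$; these differ because $\tau_P\ne\tau_Q+\gamma$. In an odd layer, the tails are $\tau_Q+x'$ and $\tau_P+x'$; these differ because $\tau_P\ne\tau_Q$. The last layer emits no vertical arc. So the case $\ell=2$ and the parity of $\ell$ cause no trouble.

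The only delicate point is this translation bookkeeping. Specifically, one must choose the offset $y_0-x_0=\gamma$ so that the relative translation between the two paths alternates between $\gamma$ in even layers and $0$ in odd layers. Once that is set, each required disjointness is exactly one clause of Definition~\ref{def:strong-switchable}.
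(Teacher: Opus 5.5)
Your proof is correct and is essentially the paper's construction. Both alternate translated copies of $P$ and $Q$ through the layers, with translations forced by the vertical-arc equations $x_{i+1}=x_i+\alpha$ or $x_i+\beta$, and each horizontal-disjointness and distinct-tail check reduces to one clause of strong switchability. The only difference is cosmetic: your initial offset $y_0-x_0=\gamma$ (instead of the paper's $p_0=q_0=0$) just swaps which parity of layers carries relative translation $\gamma$ and which carries $0$.
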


Figure~\ref{fig:strong-lifting} shows the layer pattern used in the proof.

\begin{figure}[H]
\centering
\begin{tikzpicture}[node distance=0.9cm, >=Stealth]
\node[layerbox] (p0) {$P+p_0$};
\node[layerbox, right=2.2cm of p0] (q0) {$Q+q_0$};
\node[layerbox, below=of p0] (q1) {$Q+q_1$};
\node[layerbox, below=of q0] (p1) {$P+p_1$};
\node[layerbox, below=of q1] (p2) {$P+p_2$};
\node[layerbox, below=of p1] (q2) {$Q+q_2$};
\draw[->, thick] (p0)--(q1);
\draw[->, thick] (q0)--(p1);
\draw[->, thick] (q1)--(p2);
\draw[->, thick] (p1)--(q2);
\node[left=0.25cm of p0] {\scriptsize layer $0$};
\node[left=0.25cm of q1] {\scriptsize layer $1$};
\node[left=0.25cm of p2] {\scriptsize layer $2$};
\end{tikzpicture}
\caption{Layer pattern in the strong switchable lifting.  The two Hamiltonian paths alternate the translated copies of $P$ and $Q$; the recurrence for $p_i,q_i$ ensures that the displayed vertical arcs exist.}
\label{fig:strong-lifting}
\end{figure}

\begin{proof}
Let $P,Q,\alpha,\beta,\gamma$ be as in Definition~\ref{def:strong-switchable}.  Define translations $p_i,q_i\in G$ by
\[
        p_0=q_0=0,
        \qquad
        q_{i+1}=p_i+\alpha,
        \qquad
        p_{i+1}=q_i+\beta.
\]
Then $q_i-p_i$ is $0$ for even $i$ and $\gamma$ for odd $i$.

In layer $i$ of $D\Boxprod\vec C_\ell$, place the paths $P+p_i$ and $Q+q_i$.  The first Hamiltonian path alternates
\[
        P+p_0,
        Q+q_1,
        P+p_2,
        Q+q_3,
        \ldots,
\]
and the second alternates
\[
        Q+q_0,
        P+p_1,
        Q+q_2,
        P+p_3,
        \ldots.
\]
For $0\le i\le \ell-2$, the recurrence gives the required vertical arcs from layer $i$ to layer $i+1$:
\[
        \tau_P+p_i\to \iota_Q+q_{i+1},
        \qquad
        \tau_Q+q_i\to \iota_P+p_{i+1}.
\]
No vertical arc from layer $\ell-1$ to layer $0$ is used.  Each layer contributes a translate of one of $P$ and $Q$ to each constructed path, hence every vertex in that layer is visited once by each path; the vertical arcs join the terminal of the path in layer $i$ to the initial vertex of the path in layer $i+1$.  Therefore the two paths are Hamiltonian paths of $D\Boxprod\vec C_\ell$.

Horizontal arcs are disjoint in every layer because $q_i-p_i\in\{0,\gamma\}$ and $P\cap Q=P\cap(Q+\gamma)=\varnothing$.  Horizontal arcs use directions in $D$, whereas vertical arcs use the $\vec C_\ell$ direction, so no horizontal arc can coincide with a vertical arc.  The two vertical arcs leaving a fixed layer have distinct tails because
\[
        \tau_P+p_i\ne\tau_Q+q_i
\]
whenever $q_i-p_i\in\{0,\gamma\}$, by strong switchability.  Therefore the two constructed spanning paths are arc-disjoint.
\end{proof}

\begin{remark}
The lifting lemma is independent of the special structure of two-cycle products.  Any finite Cayley digraph that carries a strongly switchable pair can be lifted through a directed-cycle factor in the same way.  Thus strong switchability is a sufficient local condition for lifting two arc-disjoint Hamiltonian paths through a directed-cycle factor.
\end{remark}

For instance, when $\ell=3$, the two lifted paths have layer pattern
\[
        (P+p_0)\to(Q+q_1)\to(P+p_2),
        \qquad
        (Q+q_0)\to(P+p_1)\to(Q+q_2).
\]
The equations $q_{i+1}=p_i+\alpha$ and $p_{i+1}=q_i+\beta$ are the endpoint equations that supply the two vertical arcs between consecutive layers.

\begin{lemma}[Uniform-coset shift compatibility]\label{lem:uniform-coset-shift}
Let $D=\Cay(G;a,b)$, let $w=a-b$, and let $C$ be a coset of $\langle w\rangle$.  Suppose that $P$ uses one fixed label on every tail in $C$, while $Q$ uses the other fixed label on every tail in $C$.  If $\gamma\in\langle w\rangle$, then $P$ and $Q+\gamma$ have no common arc with tail in $C$.  Consequently, for the paths supplied by Lemma~\ref{input:twocycle-structure}, $P$ and $Q+\gamma$ have no common arc outside the terminal coset of $P$.
\end{lemma}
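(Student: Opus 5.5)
The plan is to compare the two arcs directly at each tail. An arc of $Q+\gamma$ with tail $x$ is the translate of an arc of $Q$ with tail $x-\gamma$. Since $\gamma\in\langle w\rangle$, the vertex $x-\gamma$ lies in the same coset $C$ whenever $x\in C$, and conversely. So fix $x\in C$ and consider the unique outgoing arc of $Q+\gamma$ at $x$, if there is one. It is $x\to x+c$, where $c$ is the label $Q$ uses at $x-\gamma\in C$. By hypothesis this label is the fixed label of $Q$ on $C$.

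The outgoing arc of $P$ at $x$, if there is one, is $x\to x+c'$, where $c'$ is the fixed label of $P$ on $C$, and $c'\ne c$. In $\Cay(G;a,b)$ with $a\ne b$, two arcs with the same tail and different labels have different heads, so they are distinct arcs. Hence no common arc has its tail in $C$. I will note that the statement concerns only arcs whose tail lies in $C$, so terminal vertices, which have no outgoing arc, need no special treatment: if either path lacks an arc at $x$, there is nothing to compare.

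For the consequence, I apply Lemma~\ref{input:twocycle-structure}(iii). On every coset of $\langle w\rangle$ other than the terminal coset of $P$, both paths travel uniformly with opposite labels. The first part of the lemma therefore applies to each such coset $C$, provided $\gamma\in\langle w\rangle$. Every arc has its tail in some coset, so $P$ and $Q+\gamma$ share no arc with tail outside the terminal coset.

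The only point needing care is the meaning of "uniform travel" when a nonterminal coset might contain an endpoint of $P$ or $Q$. Part (iii) asserts uniform labels on every tail in each nonterminal coset. If a coset contained $\tau_Q$, then $Q$ would have no arc there, and the uniformity statement would simply be read over the tails that actually exist. In fact $\tau_Q$ lies in the terminal coset by (iii), so this case does not arise. I do not expect any genuine obstacle.
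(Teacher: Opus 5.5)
Your proof is correct and is essentially the paper's argument: translation by $\gamma\in\langle w\rangle$ preserves the coset $C$ and the arc labels, so at each tail in $C$ the arcs of $P$ and $Q+\gamma$ carry opposite labels and are therefore distinct; the consequence then follows by applying this to every nonterminal coset via Lemma~\ref{input:twocycle-structure}(iii). Your added observation that $\tau_Q$ lies in the terminal coset, so no missing outgoing arcs occur on nonterminal cosets, is a sound clarification that the paper leaves implicit.
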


\begin{proof}
Translation by $\gamma$ preserves each coset of $\langle w\rangle$.  Thus the arcs of $Q+\gamma$ with tails in $C$ still have tails in $C$, and they have the same label as the corresponding arcs of $Q$ on $C$.  By hypothesis this label is opposite to the label used by $P$ on $C$.  Two Cayley arcs with a common tail are equal only when their labels are equal.  Hence no arc of $P$ on $C$ equals an arc of $Q+\gamma$ on $C$.

For the final assertion, apply the first part on every nonterminal coset, using the uniform nonterminal-coset statement in Lemma~\ref{input:twocycle-structure}.
\end{proof}

\begin{lemma}[Two-cycle products provide strongly switchable pairs]\label{lem:two-cycle-strong}
Let $D=\vec C_m\Boxprod\vec C_n$.  If $D$ has no Hamiltonian directed cycle, then $D$ has a strongly switchable pair of arc-disjoint Hamiltonian paths.
\end{lemma}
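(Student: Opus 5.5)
Take the pair $P,Q$ supplied by Lemma~\ref{input:twocycle-structure} and verify the three conditions of Definition~\ref{def:strong-switchable}. Write $D=\Cay(\Z_m\times\Z_n;a,b)$, $w=a-b$, $L=\ord(w)$, and $x_r=\tau_P+rw$ for the terminal coset. First compute $\alpha,\beta,\gamma$ from the endpoint data. By endpoint rigidity, $\iota_Q=\tau_P+\epsilon$ with $\epsilon\in\{a,b\}$, so $\alpha=\tau_P-\iota_Q=-\epsilon$. By part (ii), $\iota_P=\tau_P+a+dw$. Then $\tau_Q\in\{\iota_P-a,\iota_P-b\}=\{\tau_P+dw,\tau_P+(d+1)w\}$, so $\tau_Q=\tau_P+d'w$ with $d'\in\{d,d+1\}$. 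Hence $\beta=\tau_Q-\iota_P=-a+(d'-d)w$. Checking the four sign choices shows $\gamma=\alpha-\beta\in\{0,\pm w\}$, and in particular $\gamma\in\langle w\rangle$.

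Second, disjointness $P\cap(Q+\gamma)=\varnothing$. Lemma~\ref{lem:uniform-coset-shift} handles every nonterminal coset, so only arcs with tail in the terminal coset remain. On that coset, $P$ uses $b$ on $x_1,\dots,x_d$, uses $a$ on $x_{d+1},\dots,x_{L-1}$, and has no arc at $x_0=\tau_P$. Apply part (ii) to $Q$ as well, which is allowed since it has the same terminal coset. Then $Q$ is also an interval pattern on the same coset, starting at $\tau_Q=x_{d'}$ with some cut $d_Q$ and with $\iota_Q=\tau_Q+a+d_Qw$. Comparing with $\iota_Q=\tau_P+\epsilon$ determines $d_Q$ in terms of $d,d'$. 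Arc-disjointness of $P$ and $Q$ forces the two label patterns to be complementary on the coset, apart from the missing tails. Translating $Q$ by $\gamma\in\{0,\pm w\}$ shifts its pattern by at most one index. I would then check directly that the complementary pattern survives the shift. When $\gamma=0$ this is immediate. When $\gamma=\pm w$, the shift moves the endpoint-free position of $Q$ onto the position where $P$ has no tail, which is what makes the patterns stay complementary.

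Third, the terminal inequalities. $\tau_P\neq\tau_Q$ holds because $\tau_Q=\tau_P+\tau_Q-\tau_P$ and, if $\tau_Q=\tau_P$, the path $Q$ would start at $\tau_P+\epsilon$ and the patterns could not be complementary. The cleaner route is to read off $d'\not\equiv 0\pmod L$ from the cut computations. Similarly, $\tau_P\neq\tau_Q+\gamma$ reduces to $d'+\gamma/w\not\equiv0\pmod L$.

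The main obstacle is the terminal-coset bookkeeping. One must pin down the possible $(\epsilon,d')$ combinations and $d_Q$ so that the one-step shift by $\gamma$ is compatible. If some sign combination fails, for example one where $\gamma=\pm w$ collides at a boundary index $x_d$ or $x_{d+1}$, I would try the swapped ordered pair $(Q,P)$. Swapping replaces $\gamma$ by $-\gamma$ and exchanges the roles of the terminals, and I expect one of the two orderings to be strongly switchable. The degenerate case $L$ small, e.g.\ $L=2$, is where I would check separately by hand.
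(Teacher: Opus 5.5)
Your outline follows the paper's route: the pair from Lemma~\ref{input:twocycle-structure}, then $\gamma\in\{0,\pm w\}$, then Lemma~\ref{lem:uniform-coset-shift} off the terminal coset, then interval bookkeeping with $d_Q$. But there is a genuine gap: you never invoke the hypothesis that $D$ has no Hamiltonian directed cycle, and the terminal inequalities rest on it.

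\textbf{The missing step.} Since $\iota_P=\tau_P+a+dw$, the value $d=0$ gives $\iota_P=\tau_P+a$, and $d=L-1$ gives $\iota_P=\tau_P+a-w=\tau_P+b$. In either case $P$ closes to a Hamiltonian cycle, so $1\le d\le L-2$. Then in each of the four endpoint cases both $\tau_Q$ and $\tau_Q+\gamma$ lie in $\{x_d,x_{d+1}\}$, and neither of these is $x_0=\tau_P$.

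\textbf{Why it is needed.} Without this step the inequalities really fail. For $d=L-1$, $\iota_Q=\tau_P+b$, $\tau_Q=x_d$, one has $\gamma=w$ and $\tau_Q+\gamma=x_0=\tau_P$.

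\textbf{Your argument for $\tau_P\neq\tau_Q$ is false.} You claim the patterns could not be complementary. Take $d=L-1$, $\iota_Q=\tau_P+a$, $\tau_Q=\iota_P-b=x_0$. The endpoint formula gives $d_Q=0$, so $P$ uses $b$ and $Q$ uses $a$ at every $x_1,\dots,x_{L-1}$, which is perfectly complementary. Only the no-cycle hypothesis excludes this. Once $1\le d\le L-2$ is in hand, $L\ge3$ is automatic, so the fallback to the swapped pair $(Q,P)$ and the separate check of $L=2$ are unnecessary.

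\textbf{Your disjointness heuristic is backwards.} You say that for $\gamma=\pm w$ the shift moves the tail-free vertex of $Q$ onto $\tau_P$, i.e.\ $\tau_Q+\gamma=\tau_P$. That is precisely what strong switchability forbids. What actually happens for $\gamma=w$:
\begin{itemize}
\item $d_Q=L-d-1$, with $Q_b=\{x_{d+1},\dots,x_{L-1}\}$ and $Q_a=\{x_0,\dots,x_{d-1}\}$;
\item hence $(Q+w)_a=\{x_1,\dots,x_d\}=P_b$, and $(Q+w)_b=\{x_0,x_{d+2},\dots,x_{L-1}\}$, which misses $P_a=\{x_{d+1},\dots,x_{L-1}\}$;
\item the tail-free vertex of $Q+w$ is $x_{d+1}\neq x_0$.
\end{itemize}
The case $\gamma=-w$ is the mirror computation. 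The disjointness claim itself survives once you carry out this explicit computation. The terminal inequalities, however, need the extra step above.
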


\begin{proof}
Write $D=\Cay(\Z_m\times\Z_n;a,b)$ and $w=a-b$.  By Lemma~\ref{input:twocycle-structure}, choose arc-disjoint Hamiltonian paths $P,Q$ with the endpoint rigidity and terminal-coset structure stated there.  Let $\iota_P,\tau_P$ and $\iota_Q,\tau_Q$ be their endpoints.  The endpoint rigidity gives
\[
        \iota_Q\in\{\tau_P+a,\tau_P+b\},
        \qquad
        \tau_Q\in\{\iota_P-a,\iota_P-b\}.
\]
Thus $\gamma\in\{0,\pm w\}$.

Let $L=\ord(w)$ and write the terminal coset as
\[
        x_r=\tau_P+rw,
        \qquad r\in\Z_L.
\]
The terminal-coset interval description says that for some $d$,
\[
        P_b=\{x_1,\ldots,x_d\},
        \qquad
        P_a=\{x_{d+1},\ldots,x_{L-1}\}.
\]
The endpoint formula from Lemma~\ref{input:twocycle-structure} gives
\[
        \iota_P=\tau_P+a+dw.
\]
If $d=0$, then $\iota_P=\tau_P+a$, so the terminal vertex of $P$ is joined to its initial vertex by an $a$-arc.  If $d=L-1$, then
\[
        \iota_P=\tau_P+a+(L-1)w=\tau_P+a-w=\tau_P+b,
\]
so the terminal vertex is joined to the initial vertex by a $b$-arc.  In either case $P$ closes to a Hamiltonian directed cycle, contrary to the hypothesis.  Hence
\[
        1\le d\le L-2.
\]
The endpoint formula also identifies the two possible terminal vertices of $Q$ in this coordinate system:
\[
        \iota_P-a=\tau_P+dw=x_d,
        \qquad
        \iota_P-b=\tau_P+(d+1)w=x_{d+1}.
\]
The four endpoint cases are as follows:
\[
\begin{array}{c|c|c|c}
\iota_Q & \tau_Q & \gamma & \tau_Q \text{ in the }x_r\text{ notation}\\ \hline
\tau_P+a & \iota_P-a & 0 & x_d\\
\tau_P+b & \iota_P-b & 0 & x_{d+1}\\
\tau_P+b & \iota_P-a & w & x_d\\
\tau_P+a & \iota_P-b & -w & x_{d+1}
\end{array}
\]
The nonterminal cosets are already controlled by Lemma~\ref{lem:uniform-coset-shift}; it remains only to compare labels in the terminal coset.

\medskip
\noindent\emph{Case $\gamma=0$.}
Then $P\cap(Q+\gamma)=P\cap Q=\varnothing$.  The terminal inequalities follow from $1\le d\le L-2$ and the table above.

\medskip
\noindent\emph{Case $\gamma=w$.}
Here $\iota_Q=\tau_P+b$ and $\tau_Q=x_d$.  Applying the same terminal-coset formula to $Q$ gives $\iota_Q=\tau_Q+a+d_Qw$, hence
\[
        d_Qw=(\tau_P+b)-(\tau_P+dw)-a
        =b-a-dw=-(d+1)w,
\]
since $b-a=-w$.  Thus $d_Q\equiv L-d-1\pmod L$, and because $1\le d\le L-2$ we have $1\le L-d-1\le L-2$; hence $d_Q=L-d-1$.  In the terminal coordinate for $Q$, namely $y_r=\tau_Q+rw=x_d+rw$, the set $\{y_1,\ldots,y_{d_Q}\}$ is exactly $\{x_{d+1},\ldots,x_{L-1}\}$.  Thus
\[
        Q_b=\{x_{d+1},\ldots,x_{L-1}\},
        \qquad
        Q_a=\{x_0,\ldots,x_{d-1}\}.
\]
After translating by $w$,
\[
        (Q+w)_a=\{x_1,\ldots,x_d\},
        \qquad
        (Q+w)_b=\{x_0,x_{d+2},\ldots,x_{L-1}\}.
\]
Thus $(Q+w)_a\cap P_a=\varnothing$ and $(Q+w)_b\cap P_b=\varnothing$, because
\[
        P_a=\{x_{d+1},\ldots,x_{L-1}\},
        \qquad
        P_b=\{x_1,\ldots,x_d\}.
\]

\medskip
\noindent\emph{Case $\gamma=-w$.}
Here $\iota_Q=\tau_P+a$ and $\tau_Q=x_{d+1}$.  Again the endpoint formula for $Q$ gives
\[
        d_Qw=(\tau_P+a)-(\tau_P+(d+1)w)-a=-(d+1)w,
\]
so again $d_Q\equiv L-d-1\pmod L$.  The representative is $d_Q=L-d-1$ in the terminal coordinate $y_r=\tau_Q+rw=x_{d+1}+rw$; hence $\{y_1,\ldots,y_{d_Q}\}=\{x_{d+2},\ldots,x_{L-1},x_0\}$.  Therefore
\[
        Q_b=\{x_{d+2},\ldots,x_{L-1},x_0\},
        \qquad
        Q_a=\{x_1,\ldots,x_d\}.
\]
After translating by $-w$,
\[
        (Q-w)_a=\{x_0,x_1,\ldots,x_{d-1}\},
        \qquad
        (Q-w)_b=\{x_{d+1},\ldots,x_{L-1}\}.
\]
Thus $(Q-w)_a\cap P_a=\varnothing$ and $(Q-w)_b\cap P_b=\varnothing$.

The three cases give disjointness in the terminal coset, and the nonterminal cosets were handled above.  Therefore
\[
        P\cap(Q+\gamma)=\varnothing.
\]
Finally, since $1\le d\le L-2$, neither $x_d$ nor $x_{d+1}$ is $x_0=\tau_P$, and translating by $\gamma\in\{0,\pm w\}$ in the table never sends the listed terminal to $x_0$.  Hence
\[
        \tau_P\ne\tau_Q,
        \qquad
        \tau_P\ne\tau_Q+
        \gamma.
\]
Thus $(P,Q)$ is strongly switchable.
\end{proof}

\begin{proof}[Proof of Theorem~\ref{thm:threecycles}]
Let $D=\vec C_m\Boxprod\vec C_n$.  If $D$ has a Hamiltonian directed cycle, then $D\Boxprod\vec C_\ell$ contains the spanning subdigraph
\[
        \vec C_{mn}\Boxprod\vec C_\ell,
\]
and Theorem~\ref{input:twocycle-existence} gives two arc-disjoint Hamiltonian paths in that subdigraph.  These paths use only arcs of the subdigraph, hence they are also arc-disjoint Hamiltonian paths in the full product.

If $D$ has no Hamiltonian directed cycle, Lemma~\ref{lem:two-cycle-strong} gives a strongly switchable pair in $D$, and Lemma~\ref{lem:strong-lifting} lifts it to $D\Boxprod\vec C_\ell$.
\end{proof}

\begin{proof}[Proof of Corollary~\ref{cor:all-cycle-products}]
The case $r=2$ is \cite[Theorem~4.4]{DMM}, and the cases $r\ge4$ are \cite[Corollary~5.1]{DMM}.  The remaining case $r=3$ is Theorem~\ref{thm:threecycles}.
\end{proof}

\section*{Further directions}
A natural next problem is the two-generator nilpotent case.  Morris proved Hamiltonian paths for nilpotent Cayley digraphs; the packing problem asks which part of the cut-value reflection mechanism survives without commutativity.

The strongly switchable lifting criterion isolates the data needed to lift two disjoint Hamiltonian paths through one directed-cycle factor.  A higher-multiplicity analogue would have to control several relative translations and several terminal pairs at once; this is a basic obstruction for this method beyond two paths.

The cut-reflection theorem suggests a separate direction.  The proof depends only on the following part of the DMM data: an ordered primitive-ray system, the multiplicities $H_r=\lfloor N/L(R_r)\rfloor$, the cut values $U_r$ defined by the partial multiplicity sums, the endpoint identity $U_{f-1}+H_f=N$, and the sector filling lower bound.  This leads to a precise reflection problem for other lattice-parametrized standard-path families, especially when the arc-forcing index is larger than one and several cut-value sets occur.

\appendix
\section{Computational illustrations of cut reflection}\label{app:computations}

This appendix records small enumerations of the cut permutations $\Phi_d$, illustrating the cut-reflection theorem.  These computations are not used in the proof of Theorem~\ref{thm:main}.

\subsection{Computing the cut set}

For fixed $k$ and $a\not\equiv0,-1\pmod{k}$, define $\Phi_d$ for $0\le d<k$ by
\[
\Phi_d(i)=
\begin{cases}
 i+a+1,&0\le i<d,\\
 a,&i=d,\\
 i+a,&d<i\le k-1.
\end{cases}
\]
The Hamiltonian cut set and its reflected distance are
\[
        Z(k,a)=\{d\in\{0,\ldots,k-1\}:\Phi_d\text{ is a }k\text{-cycle}\},
        \qquad
        \Delta(k,a)=\min_{d,e\in Z(k,a)} |d-((k-1)-e)|.
\]
The set $Z(k,a)$ is obtained by one orbit computation for each permutation $\Phi_d$.  This is the finite enumeration of cut values behind Corollary~\ref{cor:cut-search}.

\subsection{Parity-sharp reflection in small examples}

Corollary~\ref{cor:parity-sharp} says that exact reflection occurs precisely when $k$ is odd, while even $k$ forces distance one.  The table below records representative cases.
\[
\begin{array}{c|c|c|c|c}
 k&a&Z&N-Z&\dist(Z,N-Z)\\ \hline
 5&2&\{0,4\}&\{0,4\}&0\\
 6&2&\{1,3\}&\{2,4\}&1\\
 10&4&\{1,3,5\}&\{4,6,8\}&1\\
 15&3&\{2,4,6,8,14\}&\{0,6,8,10,12\}&0
\end{array}
\]
For even $k$, the theorem gives a count pair with sum $k-2$ or $k$.  In the example $k=10,a=4$, both occur:
\[
        3+5=8=k-2,
        \qquad
        5+5=10=k.
\]
Both need not occur.  For instance, $k=10,a=3$ gives $Z=\{0,6,8\}$, so $k-2=8$ occurs but $k=10$ does not.  Conversely, $k=10,a=6$ gives $Z=\{1,3,9\}$, so $k=10$ occurs but $k-2=8$ does not.  Thus the sharp-modulo-parity theorem guarantees at least one central sum, but the distribution between $k-2$ and $k$ depends on finer arithmetic data.

\subsection{Questions suggested by the cut values}

The sharp-modulo-parity corollary completely explains when $Z\cap(N-Z)$ is nonempty in the first cyclic family.  Two further questions are suggested by the same computations.

First, when $k$ is even, characterize the parameters for which $k-2$, $k$, or both occur as sums of two Hamiltonian cut values.  Second, in the DMM parametrization with arc-forcing index greater than one, one obtains several cut-value sets rather than a single set $Z$.  A natural mixed version of the reflection problem is to ask whether suitable pairs of such sets satisfy estimates of the form
\[
        \dist(Z_t,N-Z_{t'})\le1.
\]
The proof of Theorem~\ref{thm:near-antipodal} suggests that any such extension would require analogues of two estimates: a reflected-mass upper bound and a sector-filling lower bound.

\section*{Disclosure of generative AI use}

GPT-5.5 Pro was used extensively in both the research and writing process for this paper.  In particular, it generated substantial portions of the draft text, proposed proof structures and case decompositions, supplied candidate arguments for many lemmas and propositions, suggested revisions after mathematical review, and assisted in checking notation, dependencies, exposition, and LaTeX source.  The author directed the project, selected the final arguments, verified the mathematical reasoning and computations, checked the cited literature to the extent used in the paper, and edited the final manuscript.  The author assumes full responsibility for the correctness, originality, and integrity of the work.  The AI system is not an author and is not cited as a source of mathematical authority.

\end{document}